\theoremstyle{plain}
\newtheorem{thm}{Theorem}
\newtheorem{lemma}[thm]{Lemma}
\newtheorem{corollary}[thm]{Corollary}
\newtheorem{proposition}[thm]{Proposition}
\newtheorem{fact}[thm]{Fact}
\newtheorem{claim}{Claim}
\theoremstyle{definition}
\newtheorem{definition}[thm]{Definition}
\newtheorem{remark}[thm]{Remark}
\newtheorem{example}[thm]{Example}
\newtheorem{notation}[thm]{Notation}
\newtheorem{question}{Open question}
\numberwithin{thm}{section}
\numberwithin{claim}{thm}
\renewcommand{\a}{\alpha}
\renewcommand{\b}{\beta}
\newcommand{\e}{\varepsilon}
\renewcommand{\d}{\delta}
\renewcommand{\l}{\lambda}
\renewcommand{\o}{\omega}
\newcommand{\s}{\sigma}
\newcommand{\ol}{\overline}
\newcommand{\p}{\pi}
\newcommand{\raj}{\restriction}
\newcommand{\C}{\mathbb{C}}
\newcommand{\R}{\mathbb{R}}
\renewcommand{\empty}{\emptyset}
\newcommand{\abs}[1]{\lvert#1\rvert}
\newcommand{\norm}[1]{\lVert#1\rVert}
\newcommand{\U}{\mathcal{U}}
\newcommand{\ket}[1]{\lvert#1\rangle}
\newcommand{\bra}[1]{\langle#1\rvert}
\begin{document}

\title{On ultraproducts, the spectral theorem and rigged Hilbert spaces}
\date{\today}
\author{{\AA}sa Hirvonen}
\address{Department of Mathematics and Statistics\\
University of Helsinki\\
P.O.Box 68\\
00014 University of Helsinki\\
Finland}
\email{asa.hirvonen@helsinki.fi}
\author{Tapani Hyttinen}
\address{Department of Mathematics and Statistics\\
University of Helsinki\\
P.O.Box 68\\
00014 University of Helsinki\\
Finland}
\email{tapani.hyttinen@helsinki.fi}
\subjclass[2010]{}
\keywords{Dirac delta, finite dimensional approximation, metric ultraproduct, Feynman propagator}
\begin{abstract}
We start by showing how to approximate unitary and bounded self-adjoint
operators by operators in finite dimensional spaces. Using ultraproducts we give a precise meaning
for the approximation. In this process we see how the spectral measure is obtained
as an ultralimit of counting measures that arise naturally from the finite dimensional
approximations.  Then we see how generalized distributions can be interpreted  in
the ultraproduct. Finally we study
how one can calculate kernels of operators $K$ by calculating them in the finite dimensional approximations
and how one needs to interpret Dirac  deltas in the ultraproduct in order to
get the kernels as propagators  $\langle x_{1}|K|x_{0}\rangle$.
\end{abstract}

\maketitle

This paper continues a study of finite dimensional approximations of operators on Hilbert spaces. We started the work in \cite{HH}, motivated by a quest for mathematically justified eigenvectors in quantum mechanics. There we built eigenvectors directly as ultraproducts of eigenvectors in the small spaces. This time we look at another way of finding finite dimensional approximations, and reconstruct the spectral decomposition theorem using ultraproducts. We also find structure resembling Rigged Hilbert Spaces, thus providing generalized eigenvectors and bringing us back to our earlier motivation.

In quantum mechanics the states of a system are usually represented by unit vectors in a Hilbert space. Observables of the system are modeled by self-adjoint (often unbounded) operators whose spectrum gives the possible measurement outcomes. If the observable has a finite spectrum and a finite number of eigenvectors, this model coincides with Dirac's bra- and ket-vector representation \cite{dirac}, but in general Dirac's bras and kets don't form a Hilbert space. A key ingredient in Dirac's approach, the delta-functions, were later mathematically justified by Schwartz \cite{schwartz} as distributions, functionals over a set of test functions. Gelfand and Vilenkin found a natural space for these, namely the Rigged Hilbert Spaces. These are triplets of spaces: in addition to a Hilbert space, one considers a dense subspace of it - the test functions - and the space of either linear or antilinear functions over these test functions. This last space, in which the Hilbert space embeds naturally, is the space of distributions. The linear functions correspond to bra vectors, the antilinear functions to ket vectors.

One key question about quantum systems is how they evolve over time. Depending on the approach tot the system, this is described either by the kernel of the time evolution operator, or via the Feynman propagator. The kernel is a function $K(x,y)$ such that the action of the time evolution operator $K$ on a wave function $\varphi$ can be described by
$$
K(\varphi)(y)=\int_S K(x,y)\varphi(x)d\mu(x),
$$
where $S$ is the spectrum of the observable.

We study a method of calculating the kernel via the finite dimensional approximations. Although our method builds on ultraproducts, the space studied is not the full ultraproduct, but a subspace of it. As this is not elementary, we cannot use \L os's theorem, but this is not a problem: as we are only interested in equations, it is enough to have just an embedding of the original space into the ultraproduct space. This also means that our approach does not restrict to only pseudocompact spaces (the metric analogue of pseudofinite spaces, see \cite{GL}).

Discrete approximations of quantum systems have been used and studied
extensively in physics. From the list of references of \cite{HH} one can find examples of this.
They have been studied also in mathematical physics, see e.g. \cite{Ba}.
We got inspired by the work of B. Zilber \cite{Zi}, and set out to compute concrete examples of kernels in \cite{HH}. 
The question we studied there was whether it is possible to calculate the kernels  of
the time evolution operators by calculating them in some kind of finitely generated
approximations of quantum systems which in \cite{HH} were the free particle and
the harmonic oscillator. It turned out that in these cases it is possible.
One can choose the approximations to be simply finite dimensional Hilbert spaces
with rather straight forward approximations of the position operator and the
momentum operator. In the choice of the  approximations of the
time evolution operator one needed to be more creative in the case of
the harmonic oscillator.
To determine in which sense the approximations
approximate the quantum systems was more tricky. For that we used
a rather  heavily modified version of the metric ultraproduct of the
finite dimensional Hilbert spaces. Once we knew how the approximation works, it was
rather straight forward to do the calculations with the help of number theory.

However, following Zilber, we would have liked to be able to
calculate the kernels  by using Dirac deltas in the propagator
style $\langle x_{1}|K|x_{0}\rangle$. In \cite{HH}
our interpretation of Dirac deltas was
as ultraproducts of eigenvectors. In hind sight that was too naive an approach, and didn't work out. The  propagators calculated in the finite dimensional models did not give a correct propagator in the ultraproduct, and not even some kind of ad hoc renormalization would have made the values correct, as there were also divisibility-related discretizing effects stemming from the finite approximations. The remedy in \cite{HH} was to calculate the kernel instead, as this could be done using the propagators in the finite dimensional models, and the method 'averaged out' the discretization effect.

In this paper we continue the work we started in \cite{HH}, but we look at a general case instead of concrete examples. Also, the method is different. 
The first question we look at is the following: Given a separable Hilbert space $H$
and an operator $A$ on it, can one find finite dimensional Hilbert spaces $H_{N}$
and operators $A_{N}$ on them so that
the operators approximate $A$ in the sense that
$H$ with $A$
is isometrically  isomorphic to a submodel  of the  metric ultraproduct
of the spaces $H_{N}$ with operators $A_{N}$? The first problem in finding the pairs $(H_{N},A_{N})$
is that in the case where the operator $A$ is unbounded,
the metric ultraproduct $A^{m}$ of the operators $A_{N}$ can not be well-defined
everywhere in the metric ultraproduct $H^{m}$ of the spaces  $H_{N}$.
In \cite{HH} looking at the position and momentum operators,  we were able to show
that $A^{m}$  is well defined in a suitable part of $H^{m}$ so that the isomorphism
can indeed be found. In this paper we look only at bounded operators and thus this problem does not arise. Although this does restrict the direct applicability of our method, we hope the method can be extended, more on this below.

In the first two sections we consider the approximation question in the case where
$A$ is such that $AA^{*}=A^{*}A=rI$ where $r$ is a positive real  and
$I$ is the identity operator. So, e.g., $A$ can be a unitary operator.  We show how the operators $A_{N}$
can be found, show how the spectral measure $\mu$ for $A$ is obtained as an ultralimit of
counting measures that arise from the  approximations  naturally, and
how the approximations give $A$  as a multiplication operator in the
space $L_{2}(\s (A),\mu )$.

In the third section we study the same questions for bounded self-adjoint operators $A$
and get the same results by reducing the questions to the previous case by looking at
the operator $e^{irA}$ for $r$ a small enough real.

In the fourth section we look at generalized distributions in the context of
the space $L_{2}(\s (A),\mu )$
from the previous sections. See  \cite{RS} for the classical theory of generalized distributions.
Now the generalized distributions (excluding some)
cannot be seen as elements of $H^{m}$. Thus we look at the space
$H^{\infty}$ which is obtained from the classical ultraproduct of the spaces $H_{N}$ by fractioning out
the equivalence relation of being infinitely close to each other in a
metric that arises from
the norms in $H_N$.
This space contains
infinite vectors giving us a chance. The problem here is that the ultraproduct of
the inner products in the spaces $H_{N}$ is not well-defined everywhere in $H^{\infty}\times H^{\infty}$.
It turns out that it is well defined in a large enough subset so that one can interpret
generalized distributions $\theta$ as vectors $u(\theta )$ in $H^{\infty}$ so that
for all continuous $f:\s (A)\rightarrow \C$, where $\C$ is the field of complex numbers,
$\theta(f)=\langle F^m(f)|u(\theta )\rangle$ where $F^m$ is a natural embedding we get from
the isometric embedding of $L_{2}(\s (A),\mu  )$ into $H^{m}$, see above
(the embedding is not into $H^{\infty}$, for details see Section 4).

In the fifth section  we look at ways of  calculating the kernel of an operator
$B$ on $L_{2}(\s (A),\mu )$  (or on $H$). We start by showing that the method from \cite{HH}
works also here assuming that we can find reasonable approximations $B_{N}$ for $B$
in the spaces $H_{N}$. In \cite{HH} we showed how to find these for the time evolution operators
$B$ of the free particle and the harmonic oscillator (there $A$ was the position operator).
Here we show that if $B$ has a kernel, then the reasonable approximations $B_{N}$
can  always be found. However, unlike in the special cases studied in \cite{HH}, here
our proof is essentially existential  and does not give a practical way of finding
the approximations. One must keep in mind that finding the kernels is difficult and
thus one can not expect to have a simple trick that gives them.

In the fifth section we also show that the kernel can be obtained as  the propagator
$\langle x_{1}|B|x_{0}\rangle$
(using the discrete approximations we  can extend $B$ to an operator that
acts also on Dirac deltas)
if one is very careful in choosing the interpretations of
the Dirac deltas  $|x_{1}\rangle$ and $|x_{0}\rangle$ and here, of course, we think of Dirac deltas as
generalized distributions as is common in mathematics. The need to be very careful
comes from the fact that $\langle x_{1}|B|x_{0}\rangle$ is very sensitive to the choice of eigenvectors, as opposed to $\langle F^m(f)|u(\theta )\rangle$. In \cite{HH} we showed that a straightforward approach to eigenvectors can give completely wrong values.
As the result, this Dirac delta method is probably not very practical
if applied in a straightforward manner. From \cite{Ma} one can find an example of a rigorous use of
Dirac deltas in the context where Dirac deltas are interpreted as generalized
distributions.

\begin{question} Can one find approximations $A_{N}$ for
  bounded normal operators $A$ or unbounded self adjoint operators?
  \end{question}

Although it is not immediate, we believe that if one can find finite dimensional approximations of bounded normal operators, one can combine the classical trick of looking at the operators $(A\pm iI)^{-1}$ and our technique for building unbounded operators in ultraproducts from \cite{HH} to find approximations for unbounded self-adjoint operators.
However, even in the case of bounded normal operators we have a serious problem:
How to find the approximations $A_{N}$ so that that they are roughly like
how we chose them in Section 1 and still normal
(with the assumption $AA^{*}=A^{*}A=rI$, there are no problems in
guaranteeing  normality, in fact, this assumption was designed to
get normal approximations). The conventional trick of decomposing normal operators into two self-adjoint ones does not seem to work here, as approximating them in the spaces $H_N$ seems to destroy commutativity, so that we cannot guarantee our approximations to be normal. But without normality,
nothing that we do in this paper works.

\section{The construction} \label{sec:1}

This section presents the basic construction of finite-dimensional approximations that we will use and modify in later sections. Here we study a separable Hilbert space with a (scaled) unitary operator having a cyclic vector. We use the cyclic vector and polynomials to build finite-dimensional approximations such that the original Hilbert space embeds isometrically into their metric ultraproduct. We also get a vector space homomorphism from the space of polynomials (over a compact set $S\subset\C$) into the metric ultraproduct. In section \ref{sec:2} we will construct a suitable measure to make the homomorphism an isometry.

The construction will be used to find similar approximations for the case where the operator is bounded and self-adjoint in section \ref{sec:3}. In section \ref{sec:4} we will study other norms for the ultraproduct space that we will be essential in studying distributions in sections \ref{sec:4} and \ref{sec:5}. In section \ref{sec:non-cyclic} we will have a short look at what one can do if the operator doesn't have a cyclic vector.

So we let $H$ be a separable complex Hilbert space and $A$ an operator on $H$ of the form $qU$, where $q$ is a nonzero complex number and $U$ is unitary. Then $A$ is normal and bounded and $A^{*}\circ A=A\circ A^{*}=r_{A}I$ where $r_{A}$ is a positive real, $I$
is the identity operator, and $A^{*}$ is the adjoint of $A$. This is enough for the constructions we do in this paper (we will use the construction when $A$ is unitary), but we use notation allowing for more general operators, as there are also other operators allowing for this sort of approximations (see the Example at the end of this section). 

From a polynomial $P(X,Y)\in\C [X,Y]$ over the complex numbers $\C$ we get an operator
$P(A,A^{*})$ on $H$ the natural way, e.g. $X^{2}Y(A,A^{*})=A\circ A\circ A^{*}$,
and we say that $\phi\in H$ is \emph{cyclic} if  the set
$\{ P(A,A^{*})(\phi )\vert\ P\in\C [X,Y]\}$ is dense in $H$.
We will here study the case where there is a cyclic vector $\phi$ in $H$
and we pick $\phi$ so that in addition its norm is $1$. Notice that
$H$ can always be split into countably many complete subspaces so that
they are orthogonal to each other, closed under $A$ and $A^{*}$
and each of them has a  cyclic vector, and in section \ref{sec:non-cyclic} we will see how to combine the constructions from this decomposition.

For all $N\in\o$, let $H_{N}$ be the subspace of
$H$ generated by
\begin{equation} 
\{A^{i}(A^{*})^{j}(\phi )\vert\ i\le N,\ j\le N\} .
\end{equation}
In particular, note that $H_0$ is the space generated by $\phi$, and $H_N\subset H_{N+1}$.
Notice that as finite dimensional spaces these are complete spaces.
We write
$H^{-}_{N}$ for the subspace of
$H$ generated by
\begin{equation} 
\{A^{i}(A^{*})^{j}(\phi )\vert\ i< N,\ j\le N\}
\end{equation}
and $H^{+}_{N}$ for the subspace of
$H$ generated by
\begin{equation} 
\{A^{i}(A^{*})^{j}(\phi )\vert\ 0<i\le N,\ j\le N\} .
\end{equation}
As $AA^*=A^*A=r_AI$, the operators $A$ and $A^*$ swap these spaces: $A$  maps  $H^{-}_{N}$ onto $H^{+}_{N}$ and 
$A^{*}$ maps $H^{+}_{N}$ onto $H^{-}_{N}$. To get approximating operators on $H_N$ we can modify $A$ and $A^*$ by a wraparound trick: Let
$W^{-}$ be the  orthogonal complement of $H^{-}_{N}$
in $H_{N}$ and $W^{+}$ be the  orthogonal complement of $H^{+}_{N}$
in $H_{N}$. Then $W^{+}$ and $W^{-}$ have the same dimension and we
let $U$ be a unitary operator from $W^{-}$ onto $W^{+}$.
Now we define an operator $A_{N}$ on  $H_{N}$ as follows.
If $u\in H^{-}_{N}$, then $A_{N}(u)=A(u)$ and if
$u\in W^{-}$, then $A_{N}(u)=\sqrt{r_{A}}U(u)$
(by $\sqrt{r_{A}}$ we mean the one that is positive).
We get the adjoint of $A_{N}$  as follows:  If
$u\in H^{+}_{N}$, then we let $A^{*}_{N}(u)=A^{*}(u)$
and if $u\in W^{+}$, then $A^{*}_{N}(u)=\sqrt{r_{A}}U^{-1}(u)$.
It is easy to check that
$A^{*}_{N}$ is indeed the adjoint of $A_{N}$ and that
$A^{*}_{N}\circ A_{N}=A_{N}\circ A^{*}_{N}=r_{A}I$ and so
$A_{N}$ is normal.
Notice that because of how $A$ and $A^*$ cancel out (up to multiplication by a constant),
$H^{+}_{N}$ is also generated by the set
$\{A^{i}(A^{*})^{j}(\phi )\vert\ i\le N,\ j< N\}$. Thus $H_N^-$ and $H_N^+$ are the subspaces on which $A_N$ and $A_N^*$ respectively behave 'correctly' and
\begin{equation}\label{star1}
A_{N}(A^{i}(A^{*})^{j}(\phi))=A(A^{i}(A^{*})^{j}(\phi ))=A^{i+1}(A^{*})^{j}(\phi ) \textrm{ if } i<N,\,j\leq N
\end{equation}
and
\begin{equation}\label{star2}
A^{*}_{N}(A^{i}(A^{*})^{j}(\phi ))=A^{i}(A^{*})^{j+1}(\phi )\textrm{ if }j<N,\, i\leq N.
\end{equation}

In $H_{N}$ we can also define operators $P(A_{N},A_{N}^{*})$ for polynomials $P(X,Y)$ in the natural way.

For all $N\in\o$, we let $D_{N}$ be the dimension of  $H_{N}$
and we choose eigenvectors $u_{N}(n)$, $n<D_{N}$, of $A_{N}$
with eigenvalues $\l_{N}(n)$.
Notice that
\begin{equation}\label{2stars}
\textrm{$u_{N}(n)$ is also an eigenvector of $A^{*}_N$
with eigenvalue $\overline{\l_{N}(n)}$,}
\end{equation}
\noindent
where $\overline{\l_{N}(n)}$ is the complex conjugate of $\l_{N}(n)$. 
Since $A_{N}$ is normal, 
we can choose these so that they
form an orthonormal basis of $H_{N}$.
Denote $\xi_{N}(n)=\langle u_{N}(n) |\phi \rangle$,
i.e. 
\begin{equation}\label{phi}
\phi =\sum_{n=0}^{D_{N}-1}\xi_{N}(n)u_{N}(n),
\end{equation}
and we choose the vectors $u_{N}(n)$ so that in addition
$\xi_{N}(n)$ is a non-negative real number. One should note that even though the spaces $H_N$ form an increasing sequence, the bases do not extend each other, as the operators $A_N$ and $A_N^*$ do not extend each other (because of the wraparound modification).

Next we want to tie a spectral decomposition of $A$ to our finite-dimensional approximation. So we choose a  natural number $M$ so that it is strictly greater than both the operator norm of $A$
and $\sqrt{r_{A}}$,
and let 
\begin{equation}
S=\{\l\in\C\vert\ -M\le Re(\l )\le M, -M\le Im(\l )\le M\}.
\end{equation}
Notice that there is a real $\e >0$ such that
for all $N<\o$ and $n<D_{N}$,
$\vert\l_{N}(n)\vert <M-\e$ since the norm of
$A_{N}$ is at most the maximum of the norm of $A$
and $\sqrt{r_{A}}$.
For all $X\subseteq\C$,
let $C(X)$ be the vector space of all bounded continuous functions
from $X$  to $\C$. Of these our main interest is in $C(S)$. Notice that
since $S$ is compact, every $f\in C(S)$ is uniformly continuous.
We let
$D(S)$ be the subspace of $C(S)$ that consists
of all functions  $f_{P}(\l )=P(\l ,\ol\l )$, where $P\in\C[X,Y]$
and $\ol\l$ is the complex conjugate of $\l$.
On $D(S)$ we can again define operators $A_{D}$ and $A^{*}_{D}$
in a natural way
\begin{equation}\label{AD}
A_D(f_P)=f_{XP}\quad\text{and}\quad A^*_D(f_P)=f_{YP}
\end{equation}
(e.g. $A_{D}(f_{X^{i}Y^{j}})=f_{X^{i+1}Y^{j}}$).
Notice that $A_{D}(f_{P})(\l )=\l f_{P}(\l)$  and
$A^{*}_{D}(f_{P})(\l )=\ol\l f_{P}(\l)$.

Let's define some mappings that will be crucial in this paper: 

\begin{definition}\label{def:GjaFN}
For $P\in \C[X,Y]$, let
\begin{equation}
G(f_{P})=P(A,A^{*})(\phi )
\end{equation}
and note that it is a homomorphism from $D(S)$ to $H$ (as vector spaces), and that its image is dense in $H$ (by the cyclicity assumption).
Further let
\begin{equation}
F_{N}(f_{P})=\sum_{n<D_{N}}\xi_{N}(n)f_{P}(\l_{N}(n))u_{N}(n),
\end{equation}
and note that this is a homomorphism
from $D(S)$ to $H_{N}$. Notice that $F_{N}(f_{P})=P(A_{N},A_{N}^{*})(\phi )$.
\end{definition}

Now the inner product in $H_{N}$ (inherited from $H$), gives us one norm $\Vert \cdot\Vert_{2}$ on $H_{N}$. Using this, there is a natural way of defining metric ultraproducts of the Hilbert spaces $H_N$. However, we want to consider also other norms in the ultaproduct, so we take as our starting point a classical ultraproduct of the structures $H_N$ and 'dig out' various metric ultraproduct spaces from it. We look at the $\norm{\cdot}_2$-norm here, and consider the other norms in section \ref{sec:4}.

Let $\U$ be an ultrafilter on $\o$
such that (as in \cite{HH}) for all $m\in \o-\{ 0\}$,
$$\{ N\in\o -\{ 0\}\vert\ \sqrt{N}\in\o ,\ m|\sqrt{N}\}\in \U.$$
Let $H^{u}$ be the (classical) ultraproduct $\Pi_{N\in\o}H_{N}/\U$
and $\C^{u}$ be the ultrapower $\C^{\o}/\U$ of the
field of complex numbers $\C$. Then $H^{u}$ is a  vector space over
$\C^{u}$ and by identifying elements of $\C$
with their images  under the canonical embedding
of $\C$ into $\C^{u}$, also a vector space over
$\C$. There is also a pairing $\langle \cdot|\cdot\rangle^{u}$ from
$H^{u}$ to $\C^{u}$ which is obtained as the
ultraproduct of the inner products $\langle\cdot|\cdot\rangle_{N}$
of $H_{N}$. And similarly we get a unary function
$\Vert\cdot\Vert^{u}_{2}$.
Notice that for all $v=(v_{N})_{N\in\o}/\U\in H^{u}$, $\Vert v\Vert^{u}_{2}=
(\sqrt{\langle v_{N}|v_{N}\rangle_{N}})_{N\in\o}/\U$
and that $((\sqrt{\langle v_{N}|v_{N}\rangle_{N}})_{N\in\o}/\U)^{2}=\langle v\vert v\rangle^{u}$.
Similarly
for all $q=(q_{N})_{N\in\o}/\U\in\C^{u}$, we write $\vert q\vert^{u}$
for $(\vert q_{N}\vert)_{N\in\o}/\U$. Notice that
if we let $X$ be the range of this 'absolute value',
then $\R^{u}=X\cup\{-r\vert\ r\in X\}$ is a real closed field containing
the reals. In particular, it is linearly
ordered and thus we can compare e.g. the 'norm' $\Vert v\Vert^{u}$
of $v\in H^{u}$
and a rational number.
Thus for all $u,v\in H^{u}$, if there is $q\in\C$ such that
$q$ is infinitely close to $\langle u|v\rangle^{u}$, we write
$\langle u\vert v\rangle$ for this $q$. If there is no such $q$,
we write $\langle u|v\rangle=\infty$.

\begin{notation}\label{notation1.1}
When we define an element $(v_{N})_{N<\o}/\U$
of $H^{u}$, it is enough to define the vectors $v_{N}$ so that
the definition makes sense for all $N$ in some set that belongs to
$\U$. This often simplifies notation considerably.
\end{notation}

Now $H^{u}$ contains some structures of interest to us.
Let us first look at $H^{\infty}$: On $H^{u}$ we define
an equivalence relation $\sim$ so that
$u\sim v$ if $\Vert u-v\Vert^{u}_{2}$ is infinitesimal
(i.e. smaller than $1/n$ for all natural numbers $n$)
and let $H^{\infty}=H^{u}/\sim$. Now addition of elements of $H^{\infty}$ and
for all $q\in\C$, the multiplication by
$q$ are well-defined in $H^{\infty}$ and thus
$H^{\infty}$ is a vector space over $\C$.
We define a partial pairing $\langle \cdot|\cdot\rangle$ on
$H^{\infty}$ as follows: For all $u,v\in H^{\infty}$,
if there is $q\in\C$ such that for all
$u'\in u/\sim$ and $v'\in v/\sim$,
$\langle u'|v'\rangle^{u}=q$, 
then we let $\langle u|v\rangle=q$ and otherwise
$\langle u|v\rangle$ is $\infty$.

The second of these structures is the metric ultraproduct $H^{m}$:
We let $H^{m}$ be the set of all $v/\sim\in H^{\infty}$
such that $\Vert v\Vert^{u}_{2}$ is finite (i.e.
smaller than some natural number). Notice that
$H^{m}$ is still a vector space over $\C$
and $\langle \cdot|\cdot\rangle$ is total on  $H^{m}$
and, in fact, an inner product. This also gives
a norm on $H^{m}$ but it is the same as $\Vert\cdot\Vert_{2}$
defined above. Later, when considering spaces arising from other norms, 
we may write $H^{m}_{2}$ for $H^{m}$.

\begin{remark} We could define $\sim$ also on $\C^{u}$
by $q\sim r$ if $\vert q-r\vert^{u}$ is infinitesimal
and let $\C^{\infty}$  be $\C^{u}/\sim$. Notice that
multiplication is not well defined in $\C^{\infty}$
if one of the elements has infinite norm.
Then we could let $\C^{m}$   be (the field of)
all $q/\sim\in \C^{\infty}$ such that
$\vert q\vert^{u}$ is finite. Notice that
 $\C^{m}=\{ q/\sim\vert\ q\in\C\}$
and that for all $u,v\in H^{m}$, if $\langle u|v\rangle=q$, then
$q/\sim =\langle u|v\rangle^{u}/\sim$.
\end{remark}

Let $A^{u}$ be the ultraproduct of the operators $A_{N}$. Since
the operators $A_{N}$ are uniformly bounded,  $A^{u}$ induces a  well-defined operator
$A^{\infty}$ on $H^{\infty}$ and the restriction  $A^{m}$
of $A^{\infty}$ to $H^{m}$ is a well-defined operator on $H^{m}$.
We can do the same for the $A^{*}_{N}$'s and get operators $A^{*u}$, $A^{*\infty}$ and $A^{*m}$.
Notice that $(A^{m})^{*}=A^{*m}$ since
being an adjoint is preserved in metric ultraproducts and that
$A^{m}$ and $A^{*m}$ commute
since  
$A_{N}$ and $A^{*}_{N}$ commute for all $N$
and thus $A^{m}$ is normal.

Now the homormorphisms $F_N$ (from Definition \ref{def:GjaFN}) induce a homomorphism from the vector space $D(S)$ into $H^m$:
\begin{definition}
Let $F^m: D(S)\to H^m$ be given by
$$F^m(f_{P})=(F_{N}(f_{P}))_{N<\o}/\U/\sim .$$
By $F$ we will denote the homomorphism from $D(S)$ to $H^u$: 
$$F(f_{P})=(F_{N}(f_{P}))_{N<\o}/\U.$$
\end{definition}
Notice that $F^m(A_{D}(f_{P}))=A^{m}(F^m(f_{P}))$
and similarly for  $A^{*}_{D}$ and  $A^{*m}$. We can also embed $H$ into $H^m$:
\begin{definition}
Let $G^m: H\to H^m$ be given by
$$G^{m}(P(A,A^{*})(\phi ))=(P(A,A^{*})(\phi ))_{N<\o}/\U/\sim ,$$
and denote $H^{Im}=rng(G^m)$.
\end{definition}
Notice that $G^m$ is well-defined, as 
$$P(A,A^{*})(\phi )\in H_{N}$$
for all $N$
large  enough. Notice also that for all $\psi\in H$, $G^{m}(A(\psi ))=A^{m}(G^{m}(\psi ))$
and the same for $A^{*}$ and $A^{*m}$. Also
$\Vert G^{m}(\psi )\Vert^{m}_{2}=\Vert\psi\Vert_{2}$
and so $G^{m}$ is an (isometric) embedding of the Hilbert space $H$
into $H^{m}$ and maps $A$ to $A^{m}\raj H^{Im}$
and $A^{*}$ to $A^{*m}\raj H^{Im}$.

So we have seen:

\begin{lemma}\label{lemma1.2} $G^{m}$ is an isometric isomorphism from $H$
onto $H^{Im}$,  $G^{m}\circ A=A^{m}\circ G^{m}$,
$G^{m}\circ A^{*}=A^{*m}\circ G^{m}$ and
$A^{*m}$ and $A^{*m}\raj H^{Im}$ are the adjoints of  $A^{m}$
and $A^{m}\raj H^{Im}$, respectively. $\qed$
\end{lemma}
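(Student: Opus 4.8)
The plan is to verify directly that $G^{m}$ is well-defined, linear, isometric, surjective onto $H^{Im}$, and intertwines the operators as claimed; almost everything has in fact been checked in the discussion preceding the statement, so the proof is mostly a matter of assembling those observations. First I would recall that by definition $G^{m}$ sends $P(A,A^{*})(\phi)$ to the $\sim$-class of the sequence that is eventually constantly equal to $P(A,A^{*})(\phi)$ (viewed inside $H_{N}$ for all $N$ large enough, which is legitimate by Notation~\ref{notation1.1}, since $P(A,A^{*})(\phi)\in H_{N}$ once $N$ exceeds the degree of $P$). The key point making $G^{m}$ well-defined and isometric at once is that for such an eventually constant sequence $v=(P(A,A^{*})(\phi))_{N<\o}$ one has $\langle v_{N}|v_{N}\rangle_{N}=\langle P(A,A^{*})(\phi)|P(A,A^{*})(\phi)\rangle$ for all large $N$ (the inner product in $H_N$ is the restriction of the one in $H$), hence $\Vert G^{m}(P(A,A^{*})(\phi))\Vert^{m}_{2}=\Vert P(A,A^{*})(\phi)\Vert_{2}$. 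In particular $G^{m}$ sends a nonzero vector to a nonzero vector, so it is well-defined on the dense subspace $\{P(A,A^{*})(\phi)\mid P\in\C[X,Y]\}$ and isometric there; linearity is immediate from the definition. Since this subspace is dense in $H$ (cyclicity of $\phi$) and its image is dense in the closed subspace $H^{Im}=rng(G^{m})$, the isometry extends uniquely to an isometric isomorphism $H\to H^{Im}$.

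Next I would check the intertwining relations. For a monomial, $A\circ P(A,A^{*})=(XP)(A,A^{*})$, and applying $G^{m}$ to $(XP)(A,A^{*})(\phi)$ gives the $\sim$-class of the eventually constant sequence $(A^{i+1}(A^{*})^{j}(\phi))$; on the other hand $A^{m}$ is the restriction to $H^m$ of the ultraproduct operator $A^u$, which acts coordinatewise by $A_{N}$, and by (*)(1) we have $A_{N}(A^{i}(A^{*})^{j}(\phi))=A^{i+1}(A^{*})^{j}(\phi)$ as soon as $i<N$, i.e.\ for all large $N$. Hence $A^{m}(G^{m}(P(A,A^{*})(\phi)))=G^{m}(A(P(A,A^{*})(\phi)))$ on the dense subspace, and since both $A$ and $A^{m}$ are bounded the identity $G^{m}\circ A=A^{m}\circ G^{m}$ follows on all of $H$. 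The same argument using (*)(2) gives $G^{m}\circ A^{*}=A^{*m}\circ G^{m}$.

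Finally, for the adjoint statements: $A^{*m}=(A^{m})^{*}$ was already noted (adjointness is preserved under metric ultraproducts, as each $A^{*}_{N}$ is the adjoint of $A_{N}$). Transporting through the isometric isomorphism $G^{m}$, for $\psi,\chi\in H$ one computes $\langle A^{m}G^{m}(\psi)\,|\,G^{m}(\chi)\rangle=\langle G^{m}(A\psi)\,|\,G^{m}(\chi)\rangle=\langle A\psi\,|\,\chi\rangle=\langle\psi\,|\,A^{*}\chi\rangle=\langle G^{m}(\psi)\,|\,G^{m}(A^{*}\chi)\rangle=\langle G^{m}(\psi)\,|\,A^{*m}G^{m}(\chi)\rangle$, which together with surjectivity onto $H^{Im}$ shows that $A^{*m}$ (and its restriction to $H^{Im}$) is the adjoint of $A$ (respectively $A^{*}\raj H^{Im}$), as stated. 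I do not expect a genuine obstacle here; the only point requiring a little care is the passage from the dense subspace to all of $H$, which is where one uses boundedness of the operators together with the already-established isometry of $G^{m}$, and the bookkeeping of Notation~\ref{notation1.1} that lets one ignore the finitely many small $N$ for which $P(A,A^{*})(\phi)\notin H_{N}$.
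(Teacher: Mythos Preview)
Your proposal is correct and takes essentially the same approach as the paper: the paper gives no separate proof at all (the $\qed$ follows the phrase ``So we have seen''), treating the lemma as a summary of the observations made in the text just above it --- well-definedness via Notation~\ref{notation1.1}, isometry from the fact that the inner product in $H_N$ restricts that of $H$, the intertwining relations from $(*)(1)$ and $(*)(2)$, and the adjoint claim from the already-noted $(A^m)^*=A^{*m}$ --- which is exactly what you assemble. Your only addition is spelling out the routine density-plus-boundedness extension step, which the paper leaves implicit.
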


\begin{corollary}\label{corollary1.3}For all $f\in D(S)$, $G^{m}(G(f))=F^m(f)$.
\end{corollary}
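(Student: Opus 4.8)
The plan is to unwind the definitions of the three maps $G$, $G^m$ and $F^m$ on the generators $f_P$ of $D(S)$ and check that the two sides agree. Fix a polynomial $P\in\C[X,Y]$. By definition $G(f_P)=P(A,A^*)(\phi)\in H$, so $G^m(G(f_P))=G^m(P(A,A^*)(\phi))$, and by the defining formula for $G^m$ this is $(P(A,A^*)(\phi))_{N<\o}/D/\!\sim$, where for each large $N$ we have $P(A,A^*)(\phi)\in H_N$. On the other side, $F^m(f_P)=(F_N(f_P))_{N<\o}/D/\!\sim$, and the excerpt already records the key identity $F_N(f_P)=P(A_N,A_N^*)(\phi)$. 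So the corollary reduces to the claim that for all sufficiently large $N$ (i.e.\ for an index set in $D$),
\[
P(A_N,A_N^*)(\phi)=P(A,A^*)(\phi)\quad\text{in }H_N\subseteq H .
\]

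To prove this claim I would argue by linearity: it suffices to treat a single monomial $P=X^iY^j$, and then one wants $A_N^{\,i}(A_N^*)^{\,j}(\phi)=A^{i}(A^*)^{j}(\phi)$ for $N$ large. Here ``large'' means $N\ge \max\{i,j\}$ (or a little more, to be safe), so that all the intermediate vectors $A^{a}(A^*)^{b}(\phi)$ with $a\le i$, $b\le j$ lie in $H_N$. The mechanism is exactly the content of the bullet points (*)(1) and (*)(2) in the construction: as long as the second exponent $j$ is strictly below $N$, one has $A_N^*(A^{a}(A^*)^{b}(\phi))=A^{a}(A^*)^{b+1}(\phi)$, and as long as the first exponent $a$ is strictly below $N$, $A_N(A^{a}(A^*)^{b}(\phi))=A^{a+1}(A^*)^{b}(\phi)$. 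One should apply the $A_N^*$'s first (building up $A^{*j}(\phi)$, which stays in the relevant $H_N^+$-type subspace) and then the $A_N$'s, invoking also the remark in the construction that $H_N^+$ is generated as well by the vectors $A^{a}(A^*)^{b}(\phi)$ with $b<N$, so that all intermediate vectors stay inside the domains where $A_N,A_N^*$ agree with $A,A^*$. Applying $(*)$ step by step collapses $A_N^{\,i}(A_N^*)^{\,j}(\phi)$ to $A^{i}(A^*)^{j}(\phi)$, which is the monomial case; summing over monomials with the coefficients of $P$ gives the general statement.

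Finally, since the set $\{N : N\ge\deg(P)\}$ (or the slightly larger safe threshold) is cofinite and hence belongs to $D$, the two sequences $(P(A_N,A_N^*)(\phi))_{N}$ and $(P(A,A^*)(\phi))_{N}$ agree $D$-almost everywhere, so they represent the same element of $H^u$, a fortiori the same element of $H^m=H^u/\!\sim$. Therefore $G^m(G(f_P))=F^m(f_P)$ for every generator $f_P$, and since both $G^m\circ G$ and $F^m$ are vector-space homomorphisms on $D(S)$, they coincide on all of $D(S)$.

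I do not expect a genuine obstacle here; this is essentially a bookkeeping verification. The one point requiring a little care is making sure that, while computing $A_N^{\,i}(A_N^*)^{\,j}(\phi)$, every intermediate vector really lies in the subspace ($H_N^-$ or $H_N^+$, as appropriate) on which $A_N$ or $A_N^*$ has been defined to agree with $A$ or $A^*$ — this is why one processes the $A_N^*$-factors before the $A_N$-factors and uses the two equivalent spanning descriptions of $H_N^+$. Once the order of operations is fixed, (*)(1) and (*)(2) close the argument immediately.
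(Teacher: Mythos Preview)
Your proof is correct and is essentially the same argument as the paper's, just more fully unpacked: the paper writes the one-line chain $G^{m}(G(f))=G^{m}(P(A,A^{*})(\phi))=P(A^{m},A^{*m})(G^{m}(\phi))=F^m(f)$ and attributes the last step to (*)(1), (*)(2) and (**), whereas you directly verify at the level of sequences that $P(A_N,A_N^*)(\phi)=P(A,A^*)(\phi)$ for all large $N$, which is exactly the content hidden in that chain. The only cosmetic difference is that the paper also cites (**), which you use implicitly through the recorded identity $F_N(f_P)=P(A_N,A_N^*)(\phi)$.
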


\begin{proof} Now $f=f_{P}$ for some $P\in\C [X,Y]$. Then
$$G^{m}(G(f))=G^{m}(P(A,A^{*})(\phi ))=P(A^{m},A^{*m})(G^{m}(\phi ))=F^m(f),$$
where the last identity follows from \eqref{star1}, \eqref{star2} and \eqref{2stars} above.
\end{proof}

\vskip 1truecm

We now return to the reason for talking about $A$ and $A^*$ instead of $A$ and $A^{-1}$, namely there are operators that are not obtained just by multiplying a unitary operator by a constant but that still allow for the sort of approximation we developed here.

\begin{example}
Let $A=rU$ be an operator with a cyclic vector $u$ on a Hilbert space $H_1$ and $B=qV$ be an operator with a cyclic vector $v$ on a Hilbert space $H_2$, where $r,q\in\C$, $r,q\neq 0$, $r\neq q$, and $U$ and $V$ are unitary. Then consider the space $H\subset H_1\oplus H_2$ spanned by the vectors $(A\oplus B)^i(A^*\oplus B^*)^j(u\oplus v)$, for $i,j<\o$. This gives a space ($H$) and an operator $(A\oplus B)$ with a cyclic vector ($u\oplus v$) such that the operator is not a constant times a unitary operator, but one can still build spaces $H_N$ as above (generated by vectors $(A\oplus B)^i(A^*\oplus B^*)^j(u\oplus v)$ with $i,j\leq N$).
\end{example}

\begin{question}
What exactly is needed of the operator $A$ for this kind of approximation technique to work?
\end{question}

\section{Spectral measure} \label{sec:2}

In this section we show how weighted counting measures in the finite dimensional spaces give rise to a measure on subsets of $\C$. With this measure the vector space homomorphism $F$ from the previous section becomes an isometric embedding of $L_2(S,\mu)$ into the metric ultraproduct.

For all $N<\o$, we define a measure $\mu_{N}$ for subsets $X$ of $\C$:
Recall that $\phi =\sum_{n=0}^{D_{N}-1}\xi_{N}(n)u_{N}(n)$ and each $\xi_{N}(n)$ is a non-negative real. 
We let
$\mu_{N}(X)$ be the sum of all $\xi_{N}(n)^{2}$  such that $n<D_{N}$
and  $\l_{N}(n)\in X$. Note that, as $\norm{\phi}=1$, $\mu_N(X)\in[0,1]$ for all $N<\o$ and $X\subset\C$. Then we define a naive measure $\mu^{n}(X)$ (not a measure) to be
the ultralimit
$\lim_{\U}\mu_{N}(X)$ of the measures $\mu_{N}(X)$,
i.e., $\mu^{n}(X)$ is the unique real $r$ such that for all
$\e >0$, the set
$$\{ N<\o\vert\ \vert \mu_{N}(X)-r\vert <\e\}\in \U.$$

\begin{definition}\label{def:lines}
For all reals $r\in[-M,M]$ we define lines
$I_{r}=\{\l\in S\vert\ Re(\l)=r\}$ and
$J_{r}=\{\l\in S\vert\ Im(\l)=r\}$.
For all $\e >0$ we let
$I_{r}^{\e}=\{\l\in S\vert\ \vert Re(\l)-r\vert <\e\}$ and
$J_{r}^{\e}=\{\l\in S\vert\ \vert Im(\l)-r\vert <\e\}$.
For a point $\l\in S$  and $\e >0$, we let
$\l^{\e}=\{ x\in S\vert\ \vert Re(\l )-Re(x)\vert,\vert  Im(\l )-Im(x)\vert <\e\}$.
We say that a real $r\in [-M,M]$ is \emph{nice}  if for all $\d >0$ there is $\e >0$
such that $\mu^{n}(I^{\e}_{r}),\mu^{n}(J^{\e}_{r})<\d$.
\end{definition}

\begin{lemma}\label{lemma2.1}
  \begin{itemize}
\item[(i)] $\mu^{n}$ satisfies all the properties of a measure when restricted to
finite collections of sets e.g. if $X\subseteq Y$, then $\mu^{n}(X)\le \mu^{n}(Y)$
and if $X_{i}$, $i<m$, are disjoint, then $\mu^{n}(\cup_{i<m}X_{i})=\sum_{i<m}\mu^{n}(X_{i})$.

\item[(ii)] The set of $r\in [-M,M]$ that are not nice is countable.
Thus
there is a countable dense subset $NI$ of $[-M,M]$
such that every $r\in NI$  is nice and $-M,M\in NI$.
\end{itemize}
\end{lemma}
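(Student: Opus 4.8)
\textbf{Part (i)} is just bookkeeping about ultralimits, and I expect no obstacle. The plan is to observe that every $\mu_N$ is already a genuine finite measure: since $(u_N(n))_{n<D_N}$ is an orthonormal basis of $H_N$ and $\phi=\sum_{n<D_N}\xi_N(n)u_N(n)$ with $\Vert\phi\Vert_2=1$, we get $\sum_{n<D_N}\xi_N(n)^2=1$, so each $\mu_N$ is a finitely supported probability measure on $\C$ with $0\le\mu_N(X)\le1$ for every $X$. Hence the ultralimit $\mu^{n}(X)=\lim_D\mu_N(X)$ exists in $[0,1]$ by compactness of $[0,1]$. Monotonicity and finite additivity then transfer termwise: $X\subseteq Y$ gives $\mu_N(X)\le\mu_N(Y)$ for all $N$ and weak inequalities survive ultralimits; and for pairwise disjoint $X_0,\dots,X_{m-1}$ we have $\mu_N(\bigcup_{i<m}X_i)=\sum_{i<m}\mu_N(X_i)$ for all $N$, while the ultralimit of a fixed finite sum is the sum of the ultralimits. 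The remaining finitary measure properties follow the same way.

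\textbf{Part (ii).} For $r\in[-M,M]$ set $f(r)=\lim_{\e\to0^+}\mu^{n}(I_r^{\e})$ and $g(r)=\lim_{\e\to0^+}\mu^{n}(J_r^{\e})$; these limits exist because $\e\mapsto\mu^{n}(I_r^{\e})$ is non-increasing (monotonicity from (i), since $I_r^{\e}$ shrinks as $\e\downarrow0$) and bounded below by $0$, and likewise for $J$. The first step is to check that $r$ is nice iff $f(r)=g(r)=0$: ``$\Rightarrow$'' is trivial, and for ``$\Leftarrow$'' one picks, given $\d>0$, reals $\e_1,\e_2$ with $\mu^{n}(I_r^{\e_1}),\mu^{n}(J_r^{\e_2})<\d$ and takes $\e=\min(\e_1,\e_2)$, using monotonicity once more. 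Thus the set of non-nice reals equals $\{r:f(r)>0\}\cup\{r:g(r)>0\}=\bigcup_{k\ge1}\bigl(\{r:f(r)>1/k\}\cup\{r:g(r)>1/k\}\bigr)$, so it is enough to show that each $\{r:f(r)>1/k\}$ (and symmetrically each $\{r:g(r)>1/k\}$) is finite.

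The crux --- and the only step that needs care --- is the estimate that for any distinct $r_1<\dots<r_p$ in $[-M,M]$ one has $\sum_{i=1}^p f(r_i)\le1$. To see this, choose $\e$ below half of $\min_i(r_{i+1}-r_i)$, so that the vertical strips $I_{r_1}^{\e},\dots,I_{r_p}^{\e}$ are pairwise disjoint; then finite additivity and monotonicity from (i) give $\sum_{i=1}^p\mu^{n}(I_{r_i}^{\e})=\mu^{n}\bigl(\bigcup_{i=1}^p I_{r_i}^{\e}\bigr)\le\mu^{n}(S)=1$, where $\mu^{n}(S)=1$ because every $\l_N(n)$ lies in $S$, so $\mu_N(S)=\sum_{n<D_N}\xi_N(n)^2=1$ for all $N$. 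Letting $\e\to0^+$ in this finite sum yields $\sum_{i=1}^p f(r_i)\le1$, whence $\{r:f(r)>1/k\}$ has at most $k-1$ elements; the same holds for $g$, so the non-nice set is a countable union of finite sets, hence countable. Finally, removing a countable set from $[-M,M]$ leaves a set meeting every subinterval, so the nice reals are dense in $[-M,M]$; and $-M,M$ are themselves nice, since the $\e_0>0$ with $\vert\l_N(n)\vert<M-\e_0$ for all $N,n$ (from Section 1) forces $Re(\l_N(n)),Im(\l_N(n))\in(-M+\e_0,M-\e_0)$, so $\mu^{n}(I_{\pm M}^{\e})=\mu^{n}(J_{\pm M}^{\e})=0$ once $\e<\e_0$. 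One then takes $NI$ to be any countable dense subset of the nice reals together with the two points $-M$ and $M$.
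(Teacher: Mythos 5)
Your proof is correct and rests on the same key observation as the paper's: finitely additive monotonicity together with total mass $1$ forbids more than $\lfloor 1/\d\rfloor$ pairwise disjoint strips each of naive measure $>\d$. The paper organizes this as a proof by contradiction — assume uncountably many non-nice reals, apply the pigeonhole principle twice to find a fixed $\d>0$ and uncountably many reals whose (say) vertical strips all carry mass $>\d$, then pack too many of them — whereas you argue directly, introducing $f(r)=\lim_{\e\to 0^+}\mu^{n}(I_r^\e)$ and $g(r)$, showing ``nice'' is equivalent to $f(r)=g(r)=0$, and proving that each level set $\{r: f(r)>1/k\}$ has at most $k-1$ elements. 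Your route is a bit more constructive (it gives an explicit cardinality bound on level sets and avoids invoking $\o_1$), and you also supply the short verification that $\pm M$ are nice, which the paper leaves as ``clearly''; the underlying mechanism is nevertheless identical, so this should be viewed as the same approach presented in a cleaner, quantitatively sharper form.
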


\begin{proof} (i): The claim is clear since each  $\mu_{N}$ is a measure and thus has these properties.

(ii): Suppose not.
Since $-M$ and $M$ are clearly nice,
there must be uncountably many
reals $r\in [-M,M]$ that are not nice.
Let these be $r_{i}$, $i<\o_{1}$. By symmetry
and the pigeon hole principle we may assume
that for all $i<\o_{1}$, the vertical lines witness this.
Then by the pigeon hole principle again, we may assume that there is $\d >0$
such that for all $i<\o_{1}$ and $\e >0$, $\mu^{n}(I^{\e}_{r_{i}})>\d$.
Let $K<\o$ be such that $K\d >1$. Now for all $i<K$ choose
$\e_{i}>0$ so that the sets $I^{\e_{i}}_{r_{i}}$, $i<K$,
are disjoint. But now we can find $N<\o$ such that
for all $i<K$,  $\mu_{N}(I^{\e_{i}}_{r_{i}})>\d$.
It follows that $\mu_{N}(\cup_{i<K}I^{\e_{i}}_{r_{i}})>1$, a contradiction. \end{proof}

\begin{definition}\label{def:sqstar}
Let $Sq^{*}$ be the set of all subsets $Y$ such that there are $r_{i}\in NI$
such that
$$Y=\{ \l\in S\vert\ r_{0}<Re(\l )<r_{1},\ r_{2}<Im(\l )<r_{3}\} .$$
Now define $\mu^{*}(Y)$ for any $Y\subseteq  Int(S)$ (interior of $S$)
to be the outer measure based on $\mu^{n}(X)$, $X\in Sq^{*}$,
i.e. 
$$\mu^{*}(Y)=\inf\{\sum_{i=0}^{\infty}\mu^{n}(X_{i})\vert\ X_{i}\in Sq^{*}, Y\subseteq\cup_{i<\o}X_{i}\}.$$
As usual, we say that $X\subseteq Int(S)$ is measurable if for all $Y\subseteq Int(S)$,
$\mu^{*}(Y)=\mu^{*}(Y\cap  X)+\mu^{*}(Y-X)$. For $X\subseteq S$ (or $X\subseteq\C$), we let
$\mu^{*}(X)=\mu^{*}(X\cap Int(S))$ and $X$ is measurable if $X\cap Int(S)$
is. Since there is $\e >0$ such that for all $N<\o$ and $n<D_{N}$,
$\vert \l_{N}(n)\vert <M-\e$, it is enough to look at subsets of $Int(S)$ (everything
close to the boundary of $S$ has naive measure zero). 
We write $\ol X$ for the closure of $X\subseteq \C$.
\end{definition}

\begin{lemma}\label{lemma2.2}Let $Y\in Sq^{*}$.
\begin{itemize}
\item[(i)] $\mu^{*}(Y)=\mu^{*}(\ol Y)=\mu^{n}(Y)=\mu^{n}(\ol Y)$.
\item[(ii)] $Y$ is measurable.
  \end{itemize}
\end{lemma}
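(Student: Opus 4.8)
The plan is to reduce everything to statements about the finite counting measures $\mu_N$ and then push them through the ultralimit. The core observation is that for $Y\in Sq^{*}$, the boundary $\ol Y - Y$ is contained in a union of four lines of the form $I_r$ or $J_r$ with $r\in NI$, and each such $r$ is nice, so the ``thin slabs'' $I_r^{\e}$ and $J_r^{\e}$ have $\mu^n$-measure tending to $0$ as $\e\to 0$. This is what will force $\mu^{*}(\ol Y) = \mu^{*}(Y)$ and simultaneously give measurability.

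For part (i), I would argue as follows. The inequalities $\mu^{n}(Y)\le\mu^{*}(Y)\le\mu^{*}(\ol Y)$ and $\mu^{*}(\ol Y)\le\mu^{n}(\ol Y)$ are immediate (the first from the fact that $Y$ itself is a legitimate one-term cover by a set in $Sq^{*}$, using Lemma~\ref{lemma2.1}(i); monotonicity of $\mu^{*}$ for the middle; and for the last, $\ol Y$ differs from $Y$ by a subset of the four bounding lines, each of which can be covered by a slab $I_r^{\e}$ or $J_r^{\e}$ of arbitrarily small $\mu^n$-measure by niceness, and $\ol Y\cup(\text{slabs})$ is again coverable by finitely many sets in $Sq^{*}$, so one finishes by subadditivity of $\mu^n$ on finite collections plus a limiting argument). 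So it remains to see $\mu^{n}(\ol Y)\le\mu^{n}(Y)$, equivalently $\mu^{n}(\ol Y - Y)=0$; but $\ol Y - Y$ sits inside $I_{r_0}\cup I_{r_1}\cup J_{r_2}\cup J_{r_3}$, and for each bounding value $r\in NI$ and each $\d>0$ niceness gives $\e>0$ with $\mu^{n}(I_r^{\e})<\d$ (resp.\ $\mu^n(J_r^\e)<\d$); since the line is contained in the slab, $\mu^{n}$ of the line is $<\d$ for every $\d$, hence $0$. Summing the four zeros gives $\mu^{n}(\ol Y - Y)=0$, and then $\mu^{n}(\ol Y)=\mu^{n}(Y)$ follows from finite additivity (Lemma~\ref{lemma2.1}(i)). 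Chaining the inequalities collapses all four quantities.

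For part (ii), I need the Carathéodory condition: for every $Z\subseteq Int(S)$, $\mu^{*}(Z)\ge\mu^{*}(Z\cap Y)+\mu^{*}(Z-Y)$ (the reverse is subadditivity). Given $\d>0$, pick a cover $\{X_i\}_{i<\o}\subseteq Sq^{*}$ of $Z$ with $\sum_i\mu^{n}(X_i)\le\mu^{*}(Z)+\d$. For each $i$, the sets $X_i\cap Y$ and $X_i - \ol Y$ are again in $Sq^{*}$ (intersections and the relevant differences of open ``rational rectangles'' with $NI$-coordinates are of the same form, after absorbing at most finitely many new boundary values into $NI$ — this is where I have to be slightly careful, but $NI$ is dense so any finite set of needed new endpoints can be replaced by nice ones at the cost of an arbitrarily small error), and $X_i\cap Y$, $X_i - \ol Y$ are disjoint subsets of $X_i$ whose union is $X_i$ minus a subset of four bounding lines, which by the niceness argument above has $\mu^n$-measure $0$. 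Hence $\mu^{n}(X_i)=\mu^{n}(X_i\cap Y)+\mu^{n}(X_i - \ol Y)$ by finite additivity. Since $\{X_i\cap Y\}$ covers $Z\cap Y$ and $\{X_i - \ol Y\}$ covers $Z - \ol Y = Z - Y$ up to a $\mu^*$-null set of boundary lines, we get $\mu^{*}(Z\cap Y)+\mu^{*}(Z-Y)\le\sum_i\mu^{n}(X_i)\le\mu^{*}(Z)+\d$, and letting $\d\to 0$ finishes.

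The main obstacle I anticipate is bookkeeping rather than conceptual: the class $Sq^{*}$ is only required to use endpoints from the countable set $NI$, so when I form intersections and differences like $X_i\cap Y$ or $X_i - \ol Y$ the new rectangle edges are among the old ones and stay in $NI$ — fine — but in the measurability proof I must make sure that the cover I build of $Z\cap Y$ and of $Z - Y$ genuinely consists of $Sq^{*}$-sets and that the discarded pieces really are unions of finitely many lines $I_r,J_r$ with $r\in NI$, so that niceness applies. The clean way to do this, which I would adopt, is to first prove the ``boundary lines are $\mu^n$-null'' lemma once and for all from niceness, then observe it gives finite additivity of $\mu^n$ across the boundaries of $Sq^{*}$-sets, and only then run the short Carathéodory computation; everything else is the standard outer-measure routine and Lemma~\ref{lemma2.1}(i).
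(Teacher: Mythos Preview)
Your argument for (i) has a genuine gap. You claim $\mu^{n}(Y)\le\mu^{*}(Y)$ follows from ``$Y$ itself is a legitimate one-term cover,'' but that justification gives the \emph{opposite} inequality $\mu^{*}(Y)\le\mu^{n}(Y)$ (a one-term cover gives an upper bound for the infimum defining $\mu^{*}$). The direction $\mu^{n}(Y)\le\mu^{*}(Y)$ is exactly the nontrivial one: you must show that every countable $Sq^{*}$-cover $\{X_i\}$ of $Y$ satisfies $\sum_i\mu^{n}(X_i)\ge\mu^{n}(Y)$. Since $\mu^{n}$ is only \emph{finitely} additive (Lemma~\ref{lemma2.1}(i) gives nothing more; it is an ultralimit of measures, and countable additivity can fail), this does not come for free. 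The paper supplies the missing step via compactness: shrink $Y$ slightly to $\ol X$ with $X\in Sq^{*}$, $X\subseteq Y$, and $\mu^{n}(Y-\ol X)<\e$ (possible by niceness of the edges of $Y$), extract a finite subcover of the compact set $\ol X$ from $\{X_i\}$, and then apply finite subadditivity to derive a contradiction from $\sum_i\mu^{n}(X_i)<\mu^{n}(Y)-\e$. Without this compactness step your chain of inequalities never closes, and all four quantities in (i) remain unequated.

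Your approach to (ii) is essentially correct in spirit and close to the paper's, but there is a smaller slip: the set $X_i-\ol Y$ need not be a rectangle (for two overlapping axis-aligned rectangles it can be L-shaped or an annulus), so it is not literally in $Sq^{*}$; you must decompose it into at most four disjoint rectangles, all with edges in $NI$. The paper avoids the countable-cover bookkeeping by first reducing the Carath\'eodory test to sets $X\in Sq^{*}$ (standard for outer measures built from a covering class) and then decomposing $X-Y$ into $n\le 4$ pieces $Z_i\in Sq^{*}$ explicitly, using part (i) on each piece. Once you fix the gap in (i), either route works.
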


\begin{proof} (i): Clearly $\mu^{*}(Y)\le\mu^{*}(\ol Y)$.
From the choice of $Sq^{*}$ it follows easily that
for all $\e >0$, there is $X\in Sq^{*}$ such that
$\vert\mu^{n}(X)-\mu^{n}(Y)\vert <\e$ and $\ol Y\subseteq X$. Thus
$\mu^{*}(\ol Y)\le\mu^{n}(Y)$ and $\mu^{n}(\ol Y)=\mu^{n}(Y)$.
Now for a contradiction, suppose that
$\mu^{*}(Y) +\e<\mu^{n}(Y)$ for some $\e >0$.
Then  we can find $Y_{i}\in Sq$, $i<\o$, such that
$Y\subseteq\cup_{i<\o}Y_{i}$ and
$\sum_{i<\o}\mu^{n}(Y_{i}) +\e<\mu^{n}(Y)$.
But then we can find $X\in Sq^{*}$ such that
$X\subseteq Y$ and $\mu^{n}(Y-\ol X)<\e$.
Since $\ol X$ is compact, there is $K<\o$ such that
$\ol X\subseteq\cup_{i<K}Y_{i}$. Now
$$\mu^{n}(Y- \ol X)+\sum_{i<K}\mu^{n}(Y_{i})<\mu^{n}(Y),$$
a contradiction by Lemma \ref{lemma2.1}.

(ii): By the definition of the outer measure $\mu^*$, and by considering coverings, it is enough to show that
if $X\in Sq^{*}$, then $\mu^{*}(X-Y)+\mu^{*}(X\cap Y)=\mu^{*}(X)$.
Clearly $\mu^{*}(X-Y)+\mu^{*}(X\cap Y)\ge\mu^{*}(X)$.
On the other hand there are $n\le 4$ and
disjoint $Z_{i}\in Sq^{*}$, $i<n$, such that
$\cup_{i<n}Z_{i}\subseteq X-Y$ and
$X-Y\subseteq\cup_{i<n}\ol Z_{i}$. Notice that $X\cap Y\in Sq^{*}$.
Now
$$\mu^{*}(X-Y)\le\mu^{*}(\cup_{i<n}\ol Z_{i})\le\sum_{i<n}\mu^{*}(\ol Z_{i})=$$
$$=\sum_{i<n}\mu^{*}(Z_{i})=\sum_{i<n}\mu^{n}(Z_{i})=(\sum_{i<n}\mu^{n}(Z_{i}))+\mu^{n}(X\cap Y)-\mu^{n}(X\cap Y)\le$$
$$\le\mu^{n}((X\cap Y)\cup\cup_{i<n}Z_{i})-\mu^{n}(X\cap Y)\le\mu^{n}(X)-\mu^{n}(X\cap Y)=\mu^{*}(X)-\mu^{*}(X\cap Y).$$
\end{proof}

Now as usual one can see that
the collection of measurable sets is closed under countable unions and complements.
In particular
every Borel set ($\subseteq S$) is measurable.
For measurable $X$ ($\subseteq S$), we write $\mu (X)=\mu^{*}(X)$ and then
$\mu$ is a  Borel measure on $S$. Notice that by Lemma \ref{lemma2.2}, $\mu (S)=\mu^{*}(S)=\mu^{n}(S)=1$
(recall equation \eqref{phi} and that $\phi$ has norm $1$).

When working with the measure and later for constructing rigged Hilbert spaces, it will be convenient to have a sequence of refinements in $Sq^*$. 

\begin{definition}
Let, for each $n<\omega$, $Sq_{n}\subseteq Sq^{*}$ be a finite set 
so that

\begin{itemize}
\item[(a)] the elements of $Sq_{n}$ are disjoint,

\item[(b)] $\cup_{X\in Sq_{n}}\ol X =S$,

\item[(c)] for all $X\in Sq^{*}$ and $m<\o$,
there are $m<n<\o$  and $Z\subseteq Sq_{n}$ such that $\ol X=\cup_{Y\in Z}\ol Y$,

\item[(d)] if $n<m<\o$, $X\in Sq_{n}$, then there is
$Z\subseteq Sq_{m}$ such that $\ol X =\cup_{Y\in Z}\ol Y$, and

\item[(e)] if $Y=\{ \l\in S\vert\ r_{0}<Re(\l )<r_{1},\ r_{2}<Im(\l )<r_{3}\}\in Sq_{n}$, then
$r_{0}<r_{1}$, $r_{1}-r_{0}<2M/(n+1)$, $r_{2}<r_{3}$ and $r_{3}-r_{2}<2M/(n+1)$.
\end{itemize}
We let $Sq=\cup_{n<\o}Sq_{n}$.
\end{definition}

\begin{remark}
Now by Lemma \ref{lemma2.2}, it is easy to see that 
for all
$X\subseteq S$,
$$\mu^{*}(X)=\inf\{\sum_{k=0}^{\infty}\mu^{n}(X_{i})\vert\ X_{i}\in Sq, X\subseteq\cup_{i<\o}\ol  X_{i}\} .$$
We will use this frequently without mentioning it.
\end{remark}

Now $L_{2}(S,\mu )$ is a  Hilbert space
and we write $\Vert  \cdot\Vert_{2}$ also for the norm of this space. 
Elements of $L_{2}(S,\mu )$ are equivalence classes
of the equivalence relation $\Vert f-g\Vert_{2}=0$
and we write $[f]_{2}$ for these equivalence classes.
Then as usual we write $C(S)$ also for $\{ [f]_{2}\vert\ f\in  C(S)\}$
and similarly for $D(S)$. 
For each equivalence class $x\in C(S)$ we fix
a continuous $f_{x}:S\rightarrow\C$ so that $x=[f_{x}]_{2}$.
We will see later (in Lemma \ref{lemma2.5} below) that this determines $f_{x}$ on $\s(A)$ and what $f_x$ does outside $\s(A)$ doesn't matter.
Now later when a function gives a value
in $C(S)$ which is an equivalence class,
we can think of it also as a continuous function from
$S$ to $\C$.
Thus $L_{2}(S,\mu )$
and $L_{2}(\s (A),\mu\raj\s(A))$ are naturally isomorphic
since every $f\in C(\s (A))$ extends to a (bounded
uniformly) continuous function $f'\in C(S)$.

Our next step is to show that the mappings $F^m:D(S)\to H^m$ and $G:D(S)\to H$ are not only vector space homomorphisms, but also isometries under the $L_2$ norm, which will let us define $F^m([f]_{2})=F^m(f)$ and $G([f]_{2})=G(f)$ to get isometric embeddings of $L_2(S,\mu)$ into $H^m$ and $H$ respectively.

\begin{lemma}\label{lemma2.3} For all  $P\in\C [X,Y]$,
$\Vert F(f_{P})\Vert_{2}=\Vert f_{P}\Vert_{2}$.
\end{lemma}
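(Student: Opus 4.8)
The plan is to compute both norms explicitly in terms of the data $\xi_N(n)$, $\l_N(n)$ and $\mu$, and to recognize the right-hand side as a Riemann-type sum for the left-hand side. First I would unwind the definitions: by definition $F(f_P) = (F_N(f_P))_{N<\o}/D$ with $F_N(f_P) = \sum_{n<D_N}\xi_N(n) f_P(\l_N(n)) u_N(n)$, and since the $u_N(n)$ form an orthonormal basis of $H_N$, we get
$$\norm{F_N(f_P)}_2^2 = \sum_{n<D_N}\xi_N(n)^2 \abs{f_P(\l_N(n))}^2.$$
Hence $\norm{F(f_P)}_2^2 = \lim_D \sum_{n<D_N}\xi_N(n)^2 \abs{f_P(\l_N(n))}^2$. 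On the other side, $\norm{f_P}_2^2 = \int_S \abs{f_P(\l)}^2\, d\mu(\l)$ by definition of the norm on $L_2(S,\mu)$. So the lemma amounts to the identity
$$\int_S \abs{f_P}^2\, d\mu = \lim_D \sum_{n<D_N}\xi_N(n)^2 \abs{f_P(\l_N(n))}^2.$$

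The natural way to prove this is to observe that the finite-dimensional sum on the right is exactly $\int_S \abs{f_P}^2\, d\mu_N$, where $\mu_N$ is the weighted counting measure from the start of Section 2 (since $\mu_N$ puts mass $\xi_N(n)^2$ at the point $\l_N(n)$). So I want to pass the limit $\lim_D$ through the integral against the ultralimit measure. Concretely, I would fix $\e>0$, use uniform continuity of $f_P$ on the compact set $S$ to choose $n$ large enough that $\abs{f_P}^2$ varies by less than $\e$ on each cell of $Sq_n$, and approximate $\abs{f_P}^2$ from above and below by step functions $s^\pm = \sum_{X\in Sq_n} c_X^\pm \chi_X$ that are constant on each $X\in Sq_n$ and sandwich $\abs{f_P}^2$ to within $\e$. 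Then $\int s^\pm\, d\mu_N = \sum_{X\in Sq_n} c_X^\pm \mu_N(X)$ and $\int s^\pm\, d\mu = \sum_{X\in Sq_n} c_X^\pm \mu(X)$; since $Sq_n$ is \emph{finite} and $\lim_D \mu_N(X) = \mu^n(X) = \mu(X)$ for each $X\in Sq_n$ by Lemma~\ref{lemma2.2}(i), we get $\lim_D \int s^\pm\, d\mu_N = \int s^\pm\, d\mu$. Sandwiching and letting $\e\to 0$ gives the claim. (One small point: the $X\in Sq_n$ are the open cells, not their closures; but the boundaries are finite unions of the lines $I_r, J_r$ with $r\in NI$ nice, so they carry $\mu$-measure and $\mu_N$-measure tending to $0$, and can be absorbed into the error — this is the kind of "special case arising from the boundary" the authors said they would not dwell on.)

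The main obstacle is the interchange of $\lim_D$ with the integral: a priori $\int \abs{f_P}^2\, d\mu_N$ is an integral against a measure that depends on $N$, and $\mu$ is only the \emph{outer-measure-based} object built from the $\mu^n(X)$, $X\in Sq$, so one cannot just quote dominated convergence. The device that makes it work is precisely that the approximating step functions are supported on the fixed finite family $Sq_n$, so the interchange is over a finite sum where each term converges along $D$ by Lemma~\ref{lemma2.1}(i) and Lemma~\ref{lemma2.2}(i). A secondary point worth checking is that everything is uniformly bounded: $\abs{f_P(\l_N(n))} \le \sup_S \abs{f_P} =: C$ and $\sum_n \xi_N(n)^2 = \norm{\phi}_2^2 = 1$, so all the sums involved lie in $[0,C^2]$ and the ultralimits exist; this also justifies the harmless treatment of the boundary cells. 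No genuinely hard analysis is needed once the step-function sandwich is set up; the content is bookkeeping that the weighted counting measures $\mu_N$ integrate polynomials the same way, in the ultralimit, as the measure $\mu$ they define.
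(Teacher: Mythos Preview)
Your proposal is correct and follows essentially the same route as the paper's own proof: both recognize $\norm{F_N(f_P)}_2^2$ as $\int_S\abs{f_P}^2\,d\mu_N$, invoke uniform continuity on $S$ to reduce to step functions constant on the cells of some $Sq_n$, use $\lim_D\mu_N(Y)=\mu(Y)$ for the finitely many $Y\in Sq_n$ (Lemma~\ref{lemma2.2}(i)), and dispose of the cell boundaries via niceness of the $r\in NI$. The paper phrases this as ``essentially the same as the proof that continuous functions are Riemann integrable'' and is terser about the bookkeeping, but the argument is the same.
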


\begin{proof} This proof is essentially the same as the proof
that shows that continuous functions are Riemann integrable.
Let $1>\e >0$, $m=1+ \sup\{\vert f_{P}(\l )\vert\ \vert\ \l\in S\}$
and choose $n>0$ so that  if $\vert\l -\l'\vert <4M/n$ (which happens for all $\lambda,\lambda'\in\ol Y$, for $Y\in Sq_n$), then
$\vert f_{P}(\l )-f_{P}(\l')\vert <\e /8m$. Then we can choose
$Z\in  \U$ such that for all $N\in Z$ and $Y\in Sq_{n}$,
both $\vert\mu (Y)-\mu_{N}(Y)\vert$ and $\vert\mu (\ol Y)-\mu_{N}(\ol Y)\vert$ are $ <\e /(4m^{2}\vert Sq_{n}\vert )$. %
Notice that if we let $X$ be the union of all
$\ol Y-Y$, $Y\in Sq_{n}$, then $\mu(X)=0$, and
$$\norm{\sum_{\l_{N}(k)\in X}\xi_{N}(k)f_{P}(\l_{N}(k))u_{N}(k)}_2^2\leq\mu_N(X)m^2<\e/4.$$

Then
\begin{multline*}
\vert\ (\Vert F_{N}(f_{P})\Vert_{2})^{2}-(\Vert f_{P}\Vert_{2})^{2}\  \vert 
 =\vert\ \sum_{k<D_N}\xi_N(k)^2\abs{f_P(\l_N(k))}^2 - \int_S\abs{f_P(x)}^2d\mu \ \vert\\
 =\vert\ \sum_{\l_N(k)\in X}\xi_N(k)^2\abs{f_P(\l_N(k))}^2 +\sum_{Y\in Sq_n}\sum_{\l_N(k)\in Y}\xi_N(k)^2\abs{f_P(\l_N(k))}^2\\
 - \sum_{Y\in Sq_n}\int_Y\abs{f_P(x)}^2d\mu \ \vert\\
 \leq \e/4 +\sum_{Y\in Sq_n}\abs{\sum_{\l_N(k)\in Y}\xi_N(k)^2\abs{f_P(\l_N(k))}^2 - \int_Y\abs{f_P(x)}^2d\mu},
\end{multline*}
and for each $Y\in Sq_n$
\begin{multline*}
    \abs{\sum_{\l_N(k)\in Y}\xi_N(k)^2\abs{f_P(\l_N(k))}^2 - \int_Y\abs{f_P(x)}^2d\mu}\\
    \leq \int_Y \left(\frac{\sum_{l_N(k)\in Y}\xi_N(k)^2\abs{\abs{f_P(\l_N(k))}^2-\abs{f_P(x)}^2}}{\mu(Y)}\right.\\
\left.\quad    +\abs{\frac{\sum_{\l_N(k)\in Y}\xi_N(k)^2\abs{f_P(x)}^2}{\mu(Y)}-\abs{f_P(x)}^2}\right)d\mu\\
    \leq \int_Y \left(\frac{\mu_N(Y)2m\frac{\e}{8m}}{\mu(Y)}+\abs{\frac{\mu_N(Y)}{\mu(Y)}-1}m^2\right)d\mu\\
    =\mu_N(Y)\frac{\e}{4}+\abs{\mu_N(Y)-\mu(Y)}m^2\\
    < \mu_N(Y)\frac{\e}{4}+\frac{\e}{4\abs{Sq_n}}.
\end{multline*}
Putting this together (and recalling $\mu_N(S)=1$), we get
$$
\vert\ (\Vert F_{N}(f_{P})\Vert_{2})^{2}-(\Vert f_{P}\Vert_{2})^{2}\  \vert 
\le\e/4 +\e /4 +\e/4<\e 
$$
from which the claim follows.
\end{proof}

\begin{corollary}\label{cor:Fm_and_G_isometric}
$F^m([f]_{2})=F^m(f)$ and $G([f]_{2})=G(f)$ are isometric embeddings of $L_2(S,\mu)$ into $H^m$ and $H$ respectively.
\end{corollary}
\begin{proof}
  By the above, and noting that $G=(G^{m})^{-1}F^m$, we see that $F^m$ and $G$ are well-defined on the $\norm{\cdot}_2$ equivalence classes of $D(S)$. 
  Now we notice  that $\C [X,Y]$ is closed under the function
$f\mapsto f^{*}$, where $f^{*}(\l )=\overline{f(\l )}$. And thus
by Stone-Weierstrass,
$D(S)$ is dense in  $C(S)$ even in the uniform convergence
topology (we will need this later).  And since $C(S)$
is dense in $L_{2}(S,\mu )$, see, e.g.,  \cite{Bo} Corollary 4.2.2,
$D(S)$ is dense in $L_{2}(S,\mu )$.
Thus we can extend the functions $F^m$  and
$G$ to all of $L_{2}(S,\mu )$ by continuity.
\end{proof}

\begin{remark}
Recall that on $D(S)$, we have $A_D(f_P)(\l)=\l f_P(\l)$ and $A^*_D(f_P)(\l)=\ol \l f_P(\l)$. These can also be defined on the $\norm{\cdot}_2$ equivalence classes by
 $A_{D}([f]_{2})=[A_{D}(f)]_{2}$ (and similarly for $A^{*}_{D}$), and then extend it to all of $L_2(S,\mu)$ by continuity.
\end{remark}

\begin{corollary}\label{corollary2.4} 
$F^m$ is  an isometric isomorphism from
$L_{2}(S,\mu )$ to $H^{Im}$ and $A^{m}\raj H^{Im}=F^mA_{D}(F^m)^{-1}$.
Thus also $G=(G^{m})^{-1}F^m$ is an isometric isomorphism from
$L_{2}(S,\mu )$ to $H$ and $A=GA_{D}G^{-1}$. $\qed$
\end{corollary}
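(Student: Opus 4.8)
The plan is to assemble Corollary \ref{corollary2.4} directly from the pieces already in place, with essentially no new computation. The key preparatory fact is Lemma \ref{lemma2.3}: it tells us that $F=(F_N)_{N<\o}/D$, as a map on the dense subspace $D(S)\subseteq L_2(S,\mu)$, is norm preserving, hence so is the induced map $F^m$ into $H^m$. Since $D(S)$ is dense in $L_2(S,\mu)$ (Stone--Weierstrass plus density of $C(S)$ in $L_2(S,\mu)$, as noted just before Lemma \ref{lemma2.3}'s corollary block) and since $H^m$ is complete, $F^m$ extends uniquely to an isometric embedding of all of $L_2(S,\mu)$ into $H^m$. So first I would record that $F^m$ is a well-defined isometry on $L_2(S,\mu)$.

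Next I would identify its range. By Corollary \ref{corollary1.3}, $G^m(G(f))=F^m(f)$ for every $f\in D(S)$, i.e.\ $F^m=G^m\circ G$ on $D(S)$. Both $G^m$ (by Lemma \ref{lemma1.2}) and $F^m$ are isometric embeddings, and $H^{Im}=\operatorname{rng}(G^m)$; moreover $G(D(S))=\{P(A,A^*)(\phi)\mid P\in\C[X,Y]\}$ is dense in $H$ by the cyclicity of $\phi$, so $G^m(G(D(S)))$ is dense in $H^{Im}$. Taking closures and using that $F^m$ is an isometry onto a closed subspace, we get $\operatorname{rng}(F^m)=\overline{F^m(D(S))}=\overline{G^m(G(D(S)))}=H^{Im}$. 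Hence $F^m$ is an isometric isomorphism $L_2(S,\mu)\to H^{Im}$. The identity $G=(G^m)^{-1}F^m$ then holds on $D(S)$ by Corollary \ref{corollary1.3} and extends to all of $L_2(S,\mu)$ by continuity, and since $(G^m)^{-1}$ is an isometric isomorphism $H^{Im}\to H$, the composite $G$ is an isometric isomorphism $L_2(S,\mu)\to H$.

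For the intertwining relations, I would argue first on the dense subspace $D(S)$, where everything is purely algebraic. On $D(S)$ we have $F^m(A_D(f_P))=A^m(F^m(f_P))$ — this was observed right after the definition of $F^m$, using $(*)(1)$, $(*)(2)$ and $(**)$ — so $A^m\raj H^{Im}$ and $F^m A_D (F^m)^{-1}$ agree on the dense set $F^m(D(S))$. Both sides are bounded operators on $H^{Im}$ (because $A^m$ is bounded, being an ultraproduct of the uniformly bounded $A_N$, and $A_D$ corresponds under the isometry $F^m$ to multiplication by the coordinate function $\l$, which is bounded on the compact set $S$), so they agree everywhere: $A^m\raj H^{Im}=F^m A_D (F^m)^{-1}$. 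Conjugating by the isometric isomorphism $G^m$ and using $G^m\circ A=A^m\circ G^m$ from Lemma \ref{lemma1.2} together with $G=(G^m)^{-1}F^m$ gives $A=GA_D G^{-1}$.

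The only genuinely load-bearing point — and the one I would be most careful about — is the boundedness/continuity argument that lets us pass from the dense subspace $D(S)$ to all of $L_2(S,\mu)$ in the intertwining identity. Norm preservation of $F^m$ and density of $D(S)$ make the extension of $F^m$ itself automatic, but to conclude $A^m\raj H^{Im}=F^m A_D(F^m)^{-1}$ one must know that $A_D$, a priori only defined on $D(S)$, is bounded there; this is exactly the observation that $A_D(f)(\l)=\l f(\l)$ with $|\l|\le \sqrt{2}\,M$ on $S$, so $\|A_D f\|_2\le \sqrt{2}\,M\|f\|_2$. Everything else is bookkeeping with the already-established isomorphisms and the algebraic identities $(*)(1)$, $(*)(2)$, $(**)$.
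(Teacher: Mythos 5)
Your proposal is correct and follows exactly the route the paper considers immediate from the preceding setup: Lemma~\ref{lemma2.3} gives that $F^m$ is isometric on $D(S)$, density of $D(S)$ in $L_2(S,\mu)$ and completeness of $H^m$ extend it, Corollary~\ref{corollary1.3} and Lemma~\ref{lemma1.2} identify its range as $H^{Im}$ and give $G=(G^m)^{-1}F^m$, and the intertwining $F^m A_D=A^m F^m$ noted after the definition of $F^m$ extends to $L_2(S,\mu)$ by boundedness of $A_D$ (multiplication by $\lambda$ on the compact set $S$) and of $A^m$. The only thing you add is explicit bookkeeping that the paper omits by attaching a $\qed$ directly to the statement; the load-bearing point you single out (boundedness of $A_D$ for the density extension) is indeed the one detail worth flagging.
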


We have been looking at $L_2(S,\mu)$, where $S$ was chosen to be a 'big enough' set. However, $\mu$ only puts weight on part of this set, and next we will show that that set is exactly the spectrum of $A$.
As pointed out above, there is a rational number  $0<q<M$
such that for all $N<\o$, the norm of $A_{N}$ is $<q$  since
the norm of $A_{N}$ is  at most the norm of $A$. Thus
if we let $S^{-}=\{\l\in\C\vert\ -M< Re(\l )<M, -M< Im(\l )<M\}$
then $\mu (S^{-})=\mu (S)=1$.

\begin{definition}
Let $\s (A)$ be the set of all $\l\in\C$ such that
$A-\l I$ does not have a bounded inverse.
Notice that $\l\in\s (A)$ iff for all $\e >0$, there is
$\psi\in H$ such that
$\Vert\psi\Vert_{2}=1$ and
$\Vert A(\psi )-\l\psi\Vert_{2}<\e$.
We let $\s^{*}(A)$
be the set of all $\l\in S^{-}$ such that every open neighbourhood of
$\l$ that is contained in $S^{-}$ has measure $>0$.
\end{definition}

\begin{lemma}\label{lemma2.5} $\s (A)=\s^{*}(A)$.
\end{lemma}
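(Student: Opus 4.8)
The plan is to prove the two inclusions $\s(A)\subseteq\s^*(A)$ and $\s^*(A)\subseteq\s(A)$ separately, using in both directions the picture of $A$ as the multiplication operator $A_D$ on $L_2(S,\mu)$ supplied by Corollary~\ref{corollary2.4}, together with the fact (Lemma~\ref{lemma2.1}(i) and the discussion after Lemma~\ref{lemma2.2}) that $\mu$ is a genuine Borel measure on $S$ concentrated on $S^-$. Throughout I will freely replace $A$ acting on $H$ by $A_D$ acting on $L_2(S,\mu)$, since $G$ conjugates one to the other isometrically and hence preserves spectra; and I will use the spectral characterization noted in the excerpt: $\l\in\s(A)$ iff there are unit vectors $\psi$ with $\norm{A(\psi)-\l\psi}_2$ arbitrarily small (approximate eigenvectors), which for a normal operator detects the whole spectrum.

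**The inclusion $\s^*(A)\subseteq\s(A)$.** Suppose $\l\in\s^*(A)$, so every open neighbourhood $V\subseteq S^-$ of $\l$ has $\mu(V)>0$. Given $\e>0$, pick the open square $V=\l^{\e}\cap S^-$; then $\mu(V)>0$, so the function $g=\mathbf 1_V/\sqrt{\mu(V)}$ is a unit vector of $L_2(S,\mu)$. For the multiplication operator, $\norm{A_D(g)-\l g}_2^2=\frac1{\mu(V)}\int_V\abs{x-\l}^2\,d\mu(x)\le 2\e^2$ since $\abs{x-\l}<\sqrt2\,\e$ for $x\in\l^\e$. Letting $\e\to0$ shows $\l$ is an approximate eigenvalue, hence $\l\in\s(A_D)=\s(A)$. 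A small caveat: strictly $g$ need not be continuous, but $L_2$ vectors suffice for the approximate-eigenvector criterion, or one mollifies $\mathbf 1_V$ by a bump in $D(S)$ if a continuous representative is wanted; this is routine and I would not spell it out.

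**The inclusion $\s(A)\subseteq\s^*(A)$.** For the converse I prove the contrapositive: if $\l\notin\s^*(A)$ then $\l\notin\s(A)$. If $\l\notin S^-$, then since $\mu$ is supported on a compact subset of $S^-$ (all eigenvalues satisfy $\abs{\l_N(n)}<M-\e$, so $\mu$ lives on $\{\abs z\le q\}$ for the rational $q<M$ of the Remark), $\l$ lies outside the support of $\mu$ and the function $x\mapsto(x-\l)^{-1}$ is bounded on $\operatorname{supp}\mu$, giving a bounded inverse of $A_D-\l I$ — so $\l\notin\s(A)$. If $\l\in S^-$ but $\l\notin\s^*(A)$, there is an open neighbourhood $V\subseteq S^-$ of $\l$ with $\mu(V)=0$; shrinking, take an open ball $B=B(\l,\d)$ with $\mu(B)=0$. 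Then on $S\setminus B$ (which carries full measure) the function $x\mapsto(x-\l)^{-1}$ is bounded by $1/\d$, so multiplication by it is a bounded operator on $L_2(S,\mu)$ inverting $A_D-\l I$; hence $\l\notin\s(A_D)=\s(A)$.

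**Main obstacle.** The only genuinely delicate point is making sure the two descriptions of "spectrum" line up: that $\s(A)$, defined via boundedness of the inverse, really coincides with the set of approximate eigenvalues, and that this transfers through $G$. For normal (here even $AA^*=A^*A=r_AI$) operators this is standard, and the excerpt already records the approximate-eigenvector reformulation, so I would cite it rather than reprove it. The rest — that multiplication by $(x-\l)^{-1}$ is bounded exactly off the closed support-type region, and that $\mu(V)>0$ on every neighbourhood forces an approximate eigenvector — is soft measure theory once one knows $\mu$ is a Borel measure on the compact set $S$, which Lemma~\ref{lemma2.2} and the surrounding discussion provide.
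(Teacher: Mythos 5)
Your proposal is correct and takes essentially the same route as the paper: reduce to the multiplication operator $A_D$ on $L_2(S,\mu)$ via Corollary~\ref{corollary2.4}, then analyse cases according to whether small balls about $\l$ carry positive $\mu$-measure. The only cosmetic difference is that where the paper shows $\norm{A_D(f)-\l f}_2\ge r$ for all unit $f$ (using the approximate-eigenvector reformulation stated just before the lemma), you instead construct the bounded inverse $x\mapsto(x-\l)^{-1}$ directly; these are interchangeable.
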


\begin{proof} By Corollary \ref{corollary2.4}, it is enough to show that
$\s (A_{D})=\s^{*}(A)$, where
$\s (A_{D})$ is defined as $\s (A)$ was.
Also if $\l\not\in S^{-}$, $\l$ does not belong to either of the sets.
So suppose that $\l\in S^{-}$.

We write $B_{r}(\l )$, $r$ a positive real number, for the set of
all $\l'\in\C$ such that $\vert\l -\l'\vert<r$.
Now if for some $r$,  $B_{r}(\l )\subseteq S^{-}$  and $\mu (B_{r}(\l ))=0$, then
it is easy to see that $\Vert A_{D}(f)-\l f\Vert_{2}\ge r$
for all $f$ with norm $1$
and thus $\l\not\in\s (A_{D})$. On the other hand, if for all positive $r$
such that $B_{r}(\l )\subseteq S^{-}$,  $\mu  (B_{r}(\l  ))>0$, then
for  all such $r$, we  can find  $f\in C(S )$
such that $\Vert  f\Vert_{2}=1$ and $f(x)=0$ for all
$x\in S-B_{r}(\l )$. Then $\Vert A_{D}(f)-\l f\Vert_{2}<r$.
Thus $\l\in\s(A_{D})$. \end{proof}

\begin{corollary}\label{corollary2.6} $\mu (\s (A))=1$ and $\mu (S-\s (A))=0$.
\end{corollary}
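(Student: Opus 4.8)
The plan is to reduce everything to Lemma~\ref{lemma2.5} together with the two facts recorded earlier: $\mu(S)=1$ (just after Lemma~\ref{lemma2.2}, since $\phi$ has norm $1$) and $\mu(S^{-})=1$ (the Remark preceding Lemma~\ref{lemma2.5}). First I would note that $\s(A)$, being the spectrum of a bounded operator, is a closed bounded subset of $\C$, hence Borel, hence measurable, so $\mu(\s(A))$ and $\mu(S-\s(A))$ are well defined. Then I would split $S-\s(A)=(S-S^{-})\cup(S^{-}-\s(A))$; since $\mu(S-S^{-})=\mu(S)-\mu(S^{-})=0$, it suffices to prove $\mu(S^{-}-\s(A))=0$, from which $\mu(S-\s(A))=0$ and $\mu(\s(A))=\mu(S)-\mu(S-\s(A))=1$ follow at once.

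To show $\mu(S^{-}-\s(A))=0$, I would use Lemma~\ref{lemma2.5} to rewrite $\s(A)=\s^{*}(A)$. By the very definition of $\s^{*}(A)$, the set $S^{-}-\s^{*}(A)$ consists exactly of those $\l\in S^{-}$ that admit an open neighbourhood $V_{\l}\subseteq S^{-}$ with $\mu(V_{\l})=0$. In particular $S^{-}-\s(A)=\bigcup_{\l}V_{\l}$ is an open subset of $S^{-}$ that is covered by a family $\{V_{\l}\}$ of open null sets. Now I would invoke second countability of $\C$ (hence of $S^{-}$): the open cover $\{V_{\l}\}$ of $S^{-}-\s(A)$ has a countable subcover $\{V_{\l_{k}}\}_{k<\o}$, so $S^{-}-\s(A)=\bigcup_{k<\o}V_{\l_{k}}$. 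Since the measurable sets form a $\s$-algebra and $\mu$ is a Borel measure, countable subadditivity gives $\mu(S^{-}-\s(A))\le\sum_{k<\o}\mu(V_{\l_{k}})=0$, as desired.

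There is no genuine obstacle here; the argument is essentially bookkeeping on top of Lemma~\ref{lemma2.5}. The only two points that deserve a line of care are (a) confirming that $\s(A)$ is measurable, which is immediate since it is closed, and (b) the ``a union of open null sets is null'' step, which genuinely needs the second-countability/Lindel\"of property of $S^{-}$ to replace an a priori uncountable union of the $V_{\l}$ by a countable one — without that, a union of null sets need not be null.
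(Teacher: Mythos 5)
Your proof is correct and is essentially the argument the paper compresses into one line: both start from Lemma~\ref{lemma2.5} and the facts that $\mu$ is a Borel measure with $\mu(S)=\mu(S^-)=1$, and conclude that the complement of $\s^*(A)$ in $S^-$ is null. You spell out the Lindel\"of/second-countability step that the paper leaves implicit (one could equivalently note that $Sq^*$ is itself countable, since $NI$ is, and cover $S^--\s^*(A)$ by null members of $Sq^*$), and this is precisely the point that makes the argument rigorous.
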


\begin{proof} $\s (A)=\s^{*}(A)$ is a closed set and thus measurable and thus
the claims follow from  Lemma \ref{lemma2.5} and $\mu(S)=\mu(S^-)=1$. \end{proof}

Looking back at the constructions of this and the past section, one notes that the finite dimensional spaces $H_N$ depend on the choice of $\phi$, and thus it is likely that one gets different $H^u$ and $H^m$ depending on the choice of cyclic vector. We have not looked into this, as the whole of these spaces are too big for our interest. As every $H_N$ has a part, where the operator is replaced by a wraparound-mapping, these erroneous parts will show up in the ultraproduct, and we are really interested only in parts stemming from the original operator. There are subspaces where these errors disappear, and one of them is  $H^{Im}$, which is canonical, as we have showed it to be isometrically isomorphic to $H$. A natural question arises:

\begin{question}
Is there a natural characterization for the subspaces $H^m$ where the errors of the $A_N$'s disappear?
\end{question}

As for the canonicity of our measure $\mu$ we have the following:

\begin{corollary}
The measure $\mu$ constructed is independent of the choice of cyclic vector. In particular all measures constructed in the above manner will agree on measure zero sets, and thus agree on which lines are nice.
\end{corollary}

\begin{proof}
  This follows from Corollary \ref{corollary2.4} and the uniqueness of the spectral measure (see, e.g., section VII.2 of \cite{RS}.
\end{proof}

However, note that in general $\sigma(A)\neq\sigma(A^m)$, as the latter can be strictly larger. Thus we are not considering elementary embeddings into the (metric) ultraproduct, but only embeddings. This is enough for our purposes, as we only use the construction for calculating equations.

\section{Self-adjoint operators}\label{sec:3}

In this section we show that the constructions from Sections \ref{sec:1} and \ref{sec:2} can be generalized to self adjoint operators.

We will need the following facts, some of which seem hard to
find in the literature (in particular item (iv) of the fact).
Our version is not the best possible.

\begin{fact}\label{fact3.1} Let $H$ be a  separable Hilbert space and
$B$ a self-adjoint, $U$ a unitary operator on $H$ and $I$ the identity operator.
\begin{itemize}
\item[(i)] $\max\{\vert\l\vert \,\vert\, \l\in\s(B)\}=\Vert B\Vert$,  $e^{iB}$ is unitary and $\s(e^{iB})=\{e^{i\l} \vert \l\in\s(B)\}$.
\item[(ii)] Let $r$ and $q$ be reals.
$e^{i(B+r I)}=e^{iB}\circ e^{ir}I$,
$e^{iq}\in\s (e^{iB})$ iff $e^{i(q +r)}\in\s (e^{i(B+r I)})$
and $\l\in\s (B)$ iff $\l +r\in\s (B+rI)$.
\item[(iii)] Suppose $H_{0}$ is a complete subspace of $H$,
$H_{1}$ its orthogonal complement and $A_{i}$ are bounded operators
on $H_{i}$, $i<2$. Then there is a unique operator $A$ on
$H$ such that $A\raj H_{i}=A_{i}$ for both $i<2$
and it is bounded and $\s (A)=\s (A_{0})\cup\s (A_{1})$.
If in addition
both operators $A_{i}$ are self-adjoint,  then also
$A$ is self-adjoint and
if both are unitary, then also $A$ is unitary.
\item[(iv)] If $0\le r<\pi /2$ and for all $\l\in\s (U)$,
$\l=e^{iq}$ for some $q\in [-r,r]$, then there is
a unique self-adjoint operator $A$ such that
$U=e^{i A}$ and $\s (A)\subseteq [-r,r]$.
 \end{itemize}
\end{fact}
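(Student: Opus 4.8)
The plan is to reduce all four items to the continuous functional calculus and the spectral mapping theorem for a single bounded normal operator, and then to spend the real effort on the one delicate point, the well-definedness and uniqueness in~(iv). For~(i), I would recall that the $C^{*}$-identity gives $\norm{B^{2^{k}}}=\norm{B}^{2^{k}}$ for self-adjoint $B$ (by induction from $\norm{B^{2}}=\norm{B^{*}B}=\norm{B}^{2}$, using that each $B^{2^{k}}$ is self-adjoint), so the spectral radius $\max\{\abs{\l}\mid\l\in\s(B)\}$---a genuine maximum, since $\s(B)$ is compact and nonempty---equals $\lim_{k}\norm{B^{2^{k}}}^{1/2^{k}}=\norm{B}$. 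Then $e^{iB}$, defined by the usual power series (equivalently by the functional calculus applied to $z\mapsto e^{iz}$ on $\s(B)\subseteq\R$), satisfies $(e^{iB})^{*}=e^{-iB^{*}}=e^{-iB}=(e^{iB})^{-1}$ and so is unitary, and $\s(e^{iB})=\{e^{i\l}\mid\l\in\s(B)\}$ is the spectral mapping theorem for $z\mapsto e^{iz}$. Item~(ii) then follows by computation: $B$ commutes with $rI$, so $e^{i(B+rI)}=e^{iB}e^{irI}=e^{ir}e^{iB}$, and $\s(B+rI)=\s(B)+r$ because $(B+rI)-\l I=B-(\l-r)I$, whence $\s(e^{i(B+rI)})=\{e^{i(\l+r)}\mid\l\in\s(B)\}=e^{ir}\s(e^{iB})$ by~(i), and the two displayed equivalences drop out at once.

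For~(iii), set $A(x_{0}+x_{1})=A_{0}x_{0}+A_{1}x_{1}$ for $x_{i}\in H_{i}$; this is the unique bounded operator restricting to $A_{i}$ on $H_{i}$, with $\norm{A}=\max(\norm{A_{0}},\norm{A_{1}})$. For any scalar $\l$ the operator $A-\l I$ is invertible precisely when both $A_{i}-\l I_{H_{i}}$ are, the inverse being the direct sum of the two inverses, so $\s(A)=\s(A_{0})\cup\s(A_{1})$. Self-adjointness and unitarity of $A$ are inherited from the $A_{i}$: since each $A_{i}$ maps $H_{i}$ into $H_{i}$, the cross terms in $\langle A(x_{0}+x_{1})\mid y_{0}+y_{1}\rangle$ vanish by orthogonality, reducing both identities to the corresponding ones on $H_{0}$ and $H_{1}$.

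The substantive item is~(iv). Since $0\le r<\p/2<\p$, the arc $\{e^{iq}\mid q\in[-r,r]\}$ is contained in the open set $\C\setminus(-\infty,0]$ on which the principal logarithm $\mathrm{Log}$ is holomorphic; put $g(z)=\tfrac{1}{i}\mathrm{Log}(z)$, so $g$ is continuous on that open set, $g(e^{iq})=q$ for $\abs{q}\le r$, and $e^{ig(z)}=z$ throughout. As $\s(U)$ is a compact subset of that arc, $g$ is continuous and real-valued on $\s(U)$, so $A:=g(U)$, defined by the continuous functional calculus, is self-adjoint; the spectral mapping theorem gives $\s(A)=g(\s(U))\subseteq[-r,r]$; and the composition rule $h(f(T))=(h\circ f)(T)$ with $f=g$ and $h=\exp(i\,\cdot\,)$ yields $e^{iA}=(\exp(i\,\cdot\,)\circ g)(U)=U$, since $e^{ig(z)}=z$ on $\s(U)$. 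For uniqueness, given any self-adjoint $A'$ with $e^{iA'}=U$ and $\s(A')\subseteq[-r,r]$, applying $g$ to $U=e^{iA'}$ and using the composition rule once more gives $A'=(g\circ\exp(i\,\cdot\,))(A')=g(U)=A$, because $g(e^{iq})=q$ for all $q\in[-r,r]\supseteq\s(A')$.

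I expect the main obstacle to be the bookkeeping in~(iv): one must check that $g$ is continuous on an open set containing $\s(U)$, not merely on the abstract arc, so that $g(U)$ is legitimately given by the continuous functional calculus, and one must invoke the composition identity $h(f(T))=(h\circ f)(T)$---valid whenever $f$ is continuous on $\s(T)$ and $h$ is continuous on $f(\s(T))$---in both directions, keeping track of which spectra the intermediate functions are applied to. Everything else is standard spectral theory; the hypothesis $r<\p/2$ (rather than merely $r<\p$) is far more than is needed for injectivity of $q\mapsto e^{iq}$ on $[-r,r]$ and is presumably imposed with the later sections in mind.
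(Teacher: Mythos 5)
Your proof is correct, and for the substantive item (iv) it takes a genuinely different route from the paper. The paper proves (iv) by shifting $U$ to $U'=e^{i\pi}U$ and invoking Stone's theorem in the form of \cite[Theorem 8.4]{St}, which gives a unique self-adjoint $A'$ with $U'=e^{2\pi i A'}$, $\s(A')\subseteq[0,1]$, and $0$ not in the point spectrum of $A'$; it then translates back. Your proof instead builds the logarithm directly: since $r<\pi$ the spectrum of $U$ avoids $(-\infty,0]$, so $g=\frac{1}{i}\mathrm{Log}$ is continuous on $\s(U)$, $A:=g(U)$ is self-adjoint with $\s(A)=g(\s(U))\subseteq[-r,r]$ by spectral mapping, and both $e^{iA}=U$ and the uniqueness of $A$ follow from the composition rule for the continuous functional calculus. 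This is more elementary and self-contained: it uses only the functional calculus and spectral mapping theorem, avoids the appeal to Stone's theorem (which is a deeper result about one-parameter unitary groups), and sidesteps the extra "$0$ not in point spectrum" bookkeeping that the paper's normalization forces. One small remark: the continuous functional calculus for a normal operator already applies to functions continuous merely on the spectrum, so you do not strictly need $g$ to be defined on an open neighbourhood of $\s(U)$ — though having the holomorphic principal branch available does no harm and makes the real-valuedness and the composition identities transparent. Items (i)--(iii) in your write-up match the paper in substance: (i) is the spectral radius formula plus the spectral mapping theorem, (ii) is the same computation, and (iii) is the same direct-sum analysis.
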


\begin{proof} (i): The first claim is the spectral radius formula (for a proof see, e.g., Chapter VI of \cite{RS}) the others are proved via polynomial approximations, see Chapter VII of \cite{RS}. %

(ii): The first claim follows from the fact that
$B$ and $r I$ commute. The rest is easy.

(iii): Obviously $A$ must be such that
if $v\in H_{0}$ and $u\in H_{1}$, then
$A(v+u)=A_{0}(v)+A_{1}(u)$ and this is a bounded operator.
Also clearly $\s (A_{0})\cup\s (A_{1})\subseteq\s (A)$
and since  spectra are closed the other direction follows by
an easy calculation.  If $A_{0}$ and $A_{1}$ are self adjoint,
then for all $v_{0},v_{1}\in H_{0}$ and $u_{0},u_{1}\in H_{1}$,
$$\langle A(v_{0}+v_{1})|u_{0}+u_{1}\rangle=\langle A_{0}(v_{0})|u_{0}\rangle+\langle A_{1}(v_{1})|u_{1}\rangle=$$
$$=\langle v_{0}|A_{0}(u_{0})\rangle+\langle v_{1}|A_{1}(u_{1})\rangle=\langle v_{0}+v_{1}|A(u_{0}+u_{1})\rangle$$
where the first and the third identity follow from the choice
of $H_{0}$ and $H_{1}$. Finally if $A_{0}$
and $A_{1}$ are unitary, an easy calculation shows that
$A$ is norm preserving.

(iv): Let $U'=e^{i\pi}U$. Then
$U'$ is unitary, and if $\l\in\s (U')$, then
$\l =e^{iq}$ for some $q\in [-r+\pi,r+\pi]$. Now by
\cite[Theorem 8.4]{St}, there is a unique self-adjoint operator
$A'$ such that $U'=e^{i2\pi A'}$ and the spectrum
of $A'$ is a subset of $[0,1]$ and 0 is not in the point spectrum of $A'$. By item (i), the spectrum of $2\pi A'$ is a subset of $[\pi-r,\pi+r]$. Let $A=2\pi A'-\pi I$.
Then $e^{iA}=U$,
$\s (A)\subseteq [-r,r]$ and the uniqueness
follows from the uniqueness of $A'$ and (ii).  \end{proof}

\vskip 1truecm

From now on, in this section we assume that $Re(e^{i})\in NI$.
If this is not true, we can replace $1$ by a real $q$ close to $1$
such that $Re(e^{iq})\in NI$
and work with it.

Let $B$  be a bounded self-adjoint operator on a separable  Hilbert space $H$.
It will turn out that it does not matter whether we look at $B$ or $rB$
for some positive real $r$, and thus we may assume that the norm of
$B$ is $<1$. Also we may assume that $B(v)\ne 0$ for all
$v\in H-\{ 0\}$ because we can always restrict $B$ to the orthogonal
complement of its kernel (which is closed under $B$ since $B$ is
self-adjoint).
Let $A=e^{iB}$. Then $A$ is unitary
and not the identity by Fact \ref{fact3.1}.
Also by the same fact, if $\l\in\s (A)$, then $\l =e^{iq}$ for some
$q\in [-1,1]$.

In the proof of the following lemma, the assumption that the norm of $B$ is small plays an important role, stemming from Fact \ref{fact3.1}. We will need the uniqueness of the self-adjoint operator in point (iv) of the fact, and for that we need to avoid a 'wraparound'. Our formulation of the fact allows for $B$ to have norm smaller that $\pi/2$, and we have chosen 1 as it is convenient in calculations.

\begin{lemma}\label{lemma3.2} If $H_{0}$ is a complete subspace of $H$ and closed under $A$ and $A^{*}$,
then it is closed under $B$.
\end{lemma}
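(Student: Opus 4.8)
The statement says: if $H_0$ is a complete (= closed) subspace of $H$ that is invariant under $A=e^{iB}$ and under $A^{*}=e^{-iB}$, then $H_0$ is invariant under $B$. Since $B$ is bounded and self-adjoint with $\Vert B\Vert<1/9$, the natural idea is to recover $B$ from $A$ by a convergent power series. Because $A=e^{iB}$ and the spectrum of $B$ lies in $[-1/9,1/9]$, the principal logarithm is given by a power series that converges in operator norm: one has $B=\tfrac1i\log A=\tfrac1i\log(I+(A-I))=\tfrac1i\sum_{k\ge1}\frac{(-1)^{k+1}}{k}(A-I)^{k}$, and this series converges because $\Vert A-I\Vert=\Vert e^{iB}-I\Vert\le e^{\Vert B\Vert}-1<1$ (indeed $\le 1/9\cdot$ something, comfortably $<1$). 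The point of the hypothesis $\Vert B\Vert<1/9$ — which the paragraph before the lemma flags as important — is precisely to guarantee $\Vert A-I\Vert<1$ so that this logarithm series converges and reproduces $B$ on the nose (no branch ambiguity, by Fact \ref{fact3.1}(iv)).

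So the proof I would write goes: First, note $A-I$ maps $H_0$ into $H_0$, since $A$ does and $I$ does; hence every $(A-I)^k$ maps $H_0$ into $H_0$, and so does every partial sum $S_n=\tfrac1i\sum_{k=1}^n\frac{(-1)^{k+1}}{k}(A-I)^k$. Second, since $\Vert A-I\Vert<1$, the partial sums $S_n$ converge in operator norm to a bounded operator $S$, and by the functional calculus (or just by the identity $e^{i S}=A$ together with the spectral containment and the uniqueness in Fact \ref{fact3.1}(iv)) we have $S=B$. Third, for any $v\in H_0$ we have $S_n(v)\in H_0$ for all $n$ and $S_n(v)\to B(v)$ in norm; since $H_0$ is closed, $B(v)\in H_0$. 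That's it.

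**Where the work actually is.** The only genuinely non-routine point is justifying $S=B$, i.e. that the logarithm series really returns $B$ and not some other self-adjoint operator with $e^{iS}=A$. This is where one invokes Fact \ref{fact3.1}(iv): both $B$ and $S$ are self-adjoint, $e^{iB}=A=e^{iS}$, and one needs $\s(S)\subseteq[-\pi/2,\pi/2)$ — in fact $\s(S)\subseteq[-1/9,1/9]$ — to conclude $S=B$ by uniqueness. Controlling $\s(S)$ is immediate from the power-series definition together with the spectral mapping theorem (the series is $\tfrac1i\log$ applied via functional calculus to $A$, whose spectrum sits in $\{e^{iq}:q\in[-1/9,1/9]\}$), so $\s(S)=\{\tfrac1i\log e^{iq}:e^{iq}\in\s(A)\}\subseteq[-1/9,1/9]$. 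Alternatively one avoids the uniqueness machinery entirely by observing that the logarithm series, applied to $A=e^{iB}$ via the Borel functional calculus for the self-adjoint $B$, is literally the function $q\mapsto q$ on $[-1/9,1/9]$, hence equals $B$; this is perhaps the cleanest route. Either way the invariance conclusion follows from closedness of $H_0$ and norm-convergence of the partial sums, each of which manifestly preserves $H_0$.
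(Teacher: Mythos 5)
Your argument is correct, and it takes a genuinely different route from the paper's. The paper proves the lemma by contradiction: it passes to the orthogonal complement $H_1$ of $H_0$, observes that $H_1$ is also invariant under $A$ and $A^*$, applies Fact~\ref{fact3.1}(iv) separately on $H_0$ and $H_1$ to obtain self-adjoint logarithms $B_0,B_1$ with small spectra, glues them via Fact~\ref{fact3.1}(iii) into a self-adjoint $B'$ with $e^{iB'}=A$, and then invokes the uniqueness clause of Fact~\ref{fact3.1}(iv) to force $B'=B$ (which is a contradiction, since $B'$ visibly preserves $H_0$). You instead recover $B$ directly as the norm-limit of the partial sums of $\tfrac{1}{i}\log A=\tfrac{1}{i}\sum_{k\ge 1}\tfrac{(-1)^{k+1}}{k}(A-I)^k$, each of which is a polynomial in $A$ and hence maps $H_0$ into itself, and conclude by closedness of $H_0$. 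The verification that the series converges ($\|A-I\|<1$, which follows from $\sigma(A-I)\subseteq\{e^{iq}-1:|q|\le 1/9\}$ and normality of $A-I$) and that it returns $B$ rather than some other logarithm (because under the functional calculus of $B$ the series is literally $q\mapsto q$ on $\sigma(B)\subseteq[-1/9,1/9]$) are both sound. Your route is more constructive, avoids the orthogonal-complement decomposition and the appeal to Fact~\ref{fact3.1}(iii) entirely, and makes the role of the hypothesis $\|B\|<1/9$ completely explicit — it is exactly what guarantees $\|A-I\|<1$ and keeps $\sigma(B)$ inside the principal branch. It also shows, as a bonus, that invariance under $A$ alone already suffices, since the series involves only nonnegative powers of $A-I$.
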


\begin{proof} Suppose not. Let
$H_{1}$ be the orthogonal
complement of $H_{0}$ in $H$. Then $H_{1}$
is closed under  $A$ and $A^{*}$
and there are
self-adjoint operators $B_{0}$ in $H_{0}$
and $B_{1}$ in $H_{1}$ such that
$e^{i B_{0}}=A\raj H_{0}$ and  $e^{i B_{1}}=A\raj H_{1}$.
But then  by Fact \ref{fact3.1} (iii) there is a  self-adjoint operator
$B'$ in $H$ such that $e^{i B'}=A$
(because $e^{i B'}\raj H_{0}=A\raj H_{0}$
and $e^{i B'}\raj H_{1}=A\raj H_{1}$
and $A$ is uniquely determined by $A\raj H_0$ and $A\raj H_1$)
and $B'\ne B$.
This contradicts Fact \ref{fact3.1} (iii).\end{proof}

By Lemma \ref{lemma3.2}, if we may assume that $H$ has a cyclic vector
with respect to the operator $A$ as in Section \ref{sec:1},
 we have spaces $H_{N}$, operators  $A_{N}$, functions
$G$, $G^{m}$, $F^m$ and $F_{N}$, a spectral measure $\mu$ etc.
as in the first two sections.
Notice that each $A_{N}$ is a unitary operator and thus for all $N<\o$
and $n< D_{N}$, $\l_{N}(n)=e^{iq}$ for some
$q\in ]-\pi ,\pi ]$.
It follows that $\mu (S-S_{0})=0$
when $S_{0}$ is the set  $\{ e^{iq}\vert\ q\in ]-\pi ,\pi]\}$.

We define operators $B_{N}$ as  follows:
$B_{N}(u_{N}(n))=qu_{N}(n)$ if
$A_{N}(u_{N}(n))=e^{iq}u_{N}(n)$ and $q\in ]-\pi ,\pi]$.
We write $B^{m}$ for the metric ultraproduct
(with the filter $\U$ as in the previous sections)
of the operators
$B_{N}$. Then $A_{N}=e^{iB_{N}}$ for all $N<\o$ and
since the operators $B_{N}$ are uniformly bounded (by $\pi$)
it is easy to see that $A^{m}=e^{iB^{m}}$. Also each $B_{N}$ is self-adjoint
and thus also $B^{m}$ is self-adjoint.
We want to show that

\begin{thm}\label{theorem3.3} $B^{m}\raj H^{Im}=G^{m}B(G^{m})^{-1}$.
\end{thm}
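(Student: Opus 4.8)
The plan is to deduce the statement from Lemma~\ref{lemma1.2} (which gives $G^m\circ A = A^m\circ G^m$ with $A=e^{iB}$), the uniqueness clause in Fact~\ref{fact3.1}(iv), and the fact — already established in the construction — that $A^m = e^{iB^m}$ with $B^m$ self-adjoint. The key point is that $B^m\raj H^{Im}$ and $G^m B (G^m)^{-1}$ are two self-adjoint operators on the same Hilbert space $H^{Im}\cong H$ whose exponentials both equal $A^m\raj H^{Im}$, and whose spectra both lie in $[-1/9,1/9]$; by the uniqueness in Fact~\ref{fact3.1}(iv) they must coincide.

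First I would check that $H^{Im}=rng(G^m)$ is invariant under $B^m$. Since $H^{Im}$ is a complete subspace of $H^m$ (being the isometric image of the Hilbert space $H$) and is closed under $A^m$ and $A^{*m}$ by Lemma~\ref{lemma1.2}, and since $B^m$ is a self-adjoint operator on $H^m$ with $A^m=e^{iB^m}$, I would run the argument of Lemma~\ref{lemma3.2} (applied to $A^m$, $B^m$, $H^m$ in place of $A$, $B$, $H$) to conclude that $H^{Im}$ is closed under $B^m$. For this to be legitimate I need the norm of $B^m$ to be $<1/9$, i.e. $\s(B^m)\subseteq[-1/9,1/9]$; this follows because each $B_N$ has $\s(B_N)=\{q : e^{iq}\in\s(A_N)\}\cap\,]-\pi,\pi]$ and $\s(A_N)$ lies in the arc $\{e^{iq}: q\in[-1/9,1/9]\}$ (the $A_N$ are obtained from $A=e^{iB}$ by the construction of Section~1, so their eigenvalues are limits/combinations of eigenvalue-data of $A$, whose spectrum sits in that arc), hence $\Vert B_N\Vert\le 1/9$ for all $N$ and therefore $\Vert B^m\Vert\le 1/9$. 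Then $B^m\raj H^{Im}$ is a well-defined self-adjoint operator on $H^{Im}$ with spectrum $\subseteq[-1/9,1/9]$.

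Next, transporting by the isometry $G^m$, the operator $C:=(G^m)^{-1}(B^m\raj H^{Im})G^m$ is a self-adjoint operator on $H$ with $\s(C)\subseteq[-1/9,1/9]$, and $e^{iC}=(G^m)^{-1}(A^m\raj H^{Im})G^m = A = e^{iB}$ by Lemma~\ref{lemma1.2}. Since also $\s(B)\subseteq[-1/9,1/9]$ and $\s(A)=\{e^{i\l}:\l\in\s(B)\}$ lies in $\{e^{iq}:q\in[-1/9,1/9]\}$ with $1/9<\pi/2$, Fact~\ref{fact3.1}(iv) gives a \emph{unique} self-adjoint operator with exponential $A$ and spectrum in $[-1/9,1/9]$; hence $C=B$, which is exactly $B^m\raj H^{Im}=G^m B (G^m)^{-1}$.

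The main obstacle I expect is the bookkeeping in the first step: verifying cleanly that $\s(B^m)\subseteq[-1/9,1/9]$ and that the proof of Lemma~\ref{lemma3.2} really does transfer verbatim to the ultraproduct setting (one must know that $A^m$ is determined by its restrictions to $H^{Im}$ and its orthogonal complement, and that the self-adjoint "logarithm" $B^m$ is the one with small spectrum, so that the uniqueness bite of Fact~\ref{fact3.1}(iv) applies). Everything after that is a one-line appeal to the uniqueness clause, so the real content is confirming the hypotheses of that clause hold for both operators in sight.
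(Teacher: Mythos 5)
Your final uniqueness argument is fine: granted that $H^{Im}$ is closed under $B^m$ and that $\Vert B^m\raj H^{Im}\Vert\le 1/9$, the appeal to Fact~\ref{fact3.1}(iv) does finish the proof exactly as you say, and this matches the paper's strategy. The gap is in your justification that these hypotheses hold.

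You assert that $\s(A_N)$ lies in the arc $\{e^{iq}:q\in[-1/9,1/9]\}$, hence $\Vert B_N\Vert\le 1/9$ and so $\Vert B^m\Vert\le 1/9$ on all of $H^m$. This is false. Recall how $A_N$ is built in Section~1: it agrees with $A$ only on $H^-_N$, and on the complement $W^-$ it is defined as $\sqrt{r_A}\,U$ for an \emph{arbitrary} unitary $U:W^-\to W^+$. The resulting finite-dimensional unitary $A_N$ is a compression-plus-fixup of $A$, and nothing forces its eigenvalues to stay near $\s(A)$; in general they can land anywhere on the unit circle (this is the usual pathology of truncating an operator to a finite-dimensional corner). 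The paper itself says the $B_N$ are only ``uniformly bounded (by $\pi$)'', not by $1/9$, and it explicitly flags that ``the assumption that the norm of $B$ is small plays an important role. This is the reason why in the proof of Lemma~\ref{lemma3.4} we need Claim~\ref{claim3.4.1}.'' In other words, $\Vert B^m\Vert\le 1/9$ on all of $H^m$ is exactly the statement the paper is telling you is unavailable.

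Consequently you cannot simply ``run the argument of Lemma~\ref{lemma3.2}'' with $A^m,B^m,H^m$ in place of $A,B,H$: that argument requires $\s(A^m)$ to sit in a small arc (so that Fact~\ref{fact3.1}(iv) applies both to produce the logs $B_0,B_1$ on the two pieces and to invoke uniqueness against $B^m$), and this fails on $H^m$. What the paper actually proves (Claim~\ref{claim3.4.1}) is the much weaker but sufficient statement that the operator norm of $B^m$ restricted to $H_0$, the $B^m$-closure of $H^{Im}$, is $\le 1/9$. This is a genuinely nontrivial measure-theoretic fact: it uses $\mu(S-S^*)=0$ to show that the eigenvectors $u_N(n)$ of $A_N$ whose eigenvalues lie outside the small arc carry asymptotically negligible weight $\xi_N(n)$ inside $H_0$, so that the ``bad'' part of the spectrum of $B_N$ does not survive into $B^m\raj H_0$. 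This estimate is the real content of the theorem and is missing entirely from your proposal; without it, the uniqueness step you invoke has no foothold.
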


This follows immediately from  Fact \ref{fact3.1} (iv)
if we know that $H^{Im}$ is closed under $B^{m}$.
So what we need to show is the following:

\begin{lemma}\label{lemma3.4} $H^{Im}$ is closed under $B^{m}$.
\end{lemma}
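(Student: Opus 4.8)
The plan is to show that the self-adjoint operator $B^{m}$ maps the copy $H^{Im}=rng(G^{m})$ of $H$ into itself, after which Theorem \ref{theorem3.3} follows from the uniqueness clause of Fact \ref{fact3.1}(iv) applied inside the Hilbert space $H^{Im}$ (on which $A^{m}=e^{iB^{m}}$ restricts to a unitary whose spectrum lies in $\{e^{iq}:q\in[-1/9,1/9]\}$ via Lemma \ref{lemma1.2}). The difficulty is that $B^{m}$ is defined eigenvector-by-eigenvector in each $H_{N}$, not polynomially in $A_{N}$, so we cannot simply invoke that $H^{Im}$ is closed under polynomials in $A^{m}$ and $A^{*m}$. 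The natural remedy is to approximate the function $q\mapsto q$ on the relevant arc $\{e^{iq}:q\in[-1/9,1/9]\}$ uniformly by functions of the form $\l\mapsto P(\l,\ol\l)$, $P\in\C[X,Y]$ — this is exactly the Stone--Weierstrass density of $D(S)$ in $C(S)$ already invoked before Corollary \ref{corollary2.4} — and to use that $F^{m}$ carries such polynomial approximations into $H^{Im}$ and is norm preserving.

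More precisely, first I would fix a continuous $g:S_{0}\to\C$ (equivalently an element of $C(S)$, extending continuously) with $g(e^{iq})=q$ for $q\in[-1/9,1/9]$ and $\|g\|_{\infty}\le\pi$, chosen so that $B_{N}=g(A_{N})$ in the sense that $B_{N}(u_{N}(n))=g(\l_{N}(n))u_{N}(n)$ for every $N$ and $n$; this is consistent with the definition of $B_{N}$ because each $\l_{N}(n)=e^{iq}$ for $q\in]-\pi,\pi]$ and, since $A^{m}=e^{iB^{m}}$ and $\mu(S-S_{0})=0$, the eigenvalues that carry nontrivial weight $\xi_{N}(n)$ cluster (in the ultraproduct) on the small arc $\{e^{iq}:q\in[-1/9,1/9]\}$. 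Next, by Stone--Weierstrass pick $f_{P_{k}}\in D(S)$ with $\|f_{P_{k}}-g\|_{\infty}\to 0$. Then for each $\psi\in H$ I would compare $B^{m}(G^{m}(\psi))$ with $F^{m}(f_{P_{k}})$-type combinations: writing $\psi=G^{-1}([h]_{2})$ via the isomorphism $G:L_{2}(S,\mu)\to H$ of Corollary \ref{corollary2.4}, the element $G^{m}(\psi)=F^{m}([h]_{2})$ has, at stage $N$, coordinates $\sum_{n}\xi_{N}(n)h(\l_{N}(n))u_{N}(n)$ (up to the usual $L_{2}$-approximation of $h$ by polynomials), so $B_{N}$ applied to it gives $\sum_{n}\xi_{N}(n)\,g(\l_{N}(n))h(\l_{N}(n))u_{N}(n)$, whereas $F_{N}(f_{P_{k}}\cdot h)$ gives the same thing with $g$ replaced by $f_{P_{k}}$. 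Hence
$$\|B_{N}(G_{N}(\psi))-F_{N}(f_{P_{k}}\cdot h)\|_{2}^{2}=\sum_{n}\xi_{N}(n)^{2}\,\bigl|g(\l_{N}(n))-f_{P_{k}}(\l_{N}(n))\bigr|^{2}\,|h(\l_{N}(n))|^{2}\le \|g-f_{P_{k}}\|_{\infty}^{2}\,\|h\|_{2}^{2}+o(1),$$
using $\sum_{n}\xi_{N}(n)^{2}|h(\l_{N}(n))|^{2}\to\|h\|_{2}^{2}$ as in Lemma \ref{lemma2.3}. Taking the ultralimit, $\|B^{m}(G^{m}(\psi))-F^{m}([f_{P_{k}}\cdot h]_{2})\|^{m}_{2}\le\|g-f_{P_{k}}\|_{\infty}\|h\|_{2}$.

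Since each $F^{m}([f_{P_{k}}\cdot h]_{2})$ lies in $H^{Im}=rng(F^{m})=rng(G^{m})$ (here $f_{P_{k}}\cdot h\in L_{2}(S,\mu)$ and $F^{m}$ is defined on all of $L_{2}(S,\mu)$ by Corollary \ref{corollary2.4}) and $H^{Im}$ is closed (being the isometric image of the complete space $H$), letting $k\to\infty$ shows $B^{m}(G^{m}(\psi))\in H^{Im}$, which is what we wanted. The step I expect to be the main obstacle is the bookkeeping in the previous paragraph: one has to pass through the polynomial/$L_{2}$ approximations of $h$ twice (once to represent $G^{m}(\psi)$ via $F^{m}$, once to control $f_{P_{k}}\cdot h$), keep the error terms $o(1)$ uniform along $D$, and verify that the $\Vert\cdot\Vert_{0}$-type weights $\xi_{N}(n)^{2}$ behave as the measure $\mu_{N}$ predicts — i.e. that the naive-measure estimates of Lemma \ref{lemma2.3} can be applied to the product $f_{P_{k}}\cdot h$ and to $g\cdot h$ simultaneously. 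Once that is in place, uniform smallness of $\|g-f_{P_{k}}\|_{\infty}$ does the rest, and the use of $\|B\|<1/9$ enters only through Fact \ref{fact3.1}(iv) in the deduction of Theorem \ref{theorem3.3}, exactly as the remark before the lemma indicates.
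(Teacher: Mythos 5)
Your strategy --- realize $B^{m}$ on $H^{Im}$ as multiplication by a continuous modification $g$ of the argument function, approximate $g$ uniformly by polynomials via Stone--Weierstrass, and push this through the isometry $F^{m}$ --- is a genuinely different route from the paper's proof, which instead establishes the operator-norm bound $\Vert B^{m}\raj H_{0}\Vert\le 1/9$ (Claim \ref{claim3.4.1}) by contradiction and then appeals to the uniqueness clause of Fact \ref{fact3.1}(iv). Your route can be made to work, and when corrected it would even deliver Theorem \ref{theorem3.3} directly, by identifying $B^{m}\raj H^{Im}$ with $F^{m}B_{D}(F^{m})^{-1}$. However, as written there is a genuine gap at the very first step. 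You ask for a \emph{continuous} $g\in C(S)$ with $g(e^{iq})=q$ on $[-1/9,1/9]$ while \emph{also} $B_{N}(u_{N}(n))=g(\l_{N}(n))u_{N}(n)$ for every $N$ and $n$. These two requirements are incompatible: by definition $B_{N}$ multiplies $u_{N}(n)$ by $q_{n}\in\,]-\pi,\pi]$ with $\l_{N}(n)=e^{iq_{n}}$, and the eigenvalues $\l_{N}(n)$ of the unitary $A_{N}$ lie on the whole unit circle, on which $\l\mapsto\arg\l$ is discontinuous at $\l=-1$. Your justification --- that the eigenvalues carrying ``nontrivial'' weight cluster on the small arc --- conflates small weight with zero weight: the coefficients $\xi_{N}(n)$ with $\vert q_{n}\vert>1/9$ satisfy $\sum\xi_{N}(n)^{2}=\mu_{N}(S-S^{*})\to 0$, but at each finite $N$ they need not vanish. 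Consequently your displayed equality, with $g(\l_{N}(n))$ appearing under the sum, is false; the correct expression has $q_{n}$ there, and $q_{n}=g(\l_{N}(n))$ only on the arc $\{e^{iq}:\vert q\vert\le 1/9\}$. Hence $\Vert g-f_{P_{k}}\Vert_{\infty}$ does not control the full sum.

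The fix is to split the sum at that arc, use $q_{n}=g(\l_{N}(n))$ on the inside part, and bound the outside contribution by $(2\pi)^{2}\sup\vert h\vert^{2}\cdot\mu_{N}(S-S^{*})$, which tends to $0$ along $D$ because $\mu(S-S^{*})=0$ and $Re(e^{i/9})\in NI$ (the assumption made at the start of Section 3). This missing estimate is exactly the measure-zero argument at the core of the paper's Claim \ref{claim3.4.1} --- the decomposition of $H_{N}$ into $H^{0}_{N}$ and $H^{1}_{N}$ and items (b), (c) there. In other words, the hypothesis $\Vert B\Vert<1/9$ does not enter your argument only through Fact \ref{fact3.1}(iv) at the end, as your closing sentence suggests; it is equally essential here, both to make $\arg$ continuous on a neighbourhood of $\sigma(A)$ and to render the outside-the-arc error negligible. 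Once that estimate is supplied your argument is sound.
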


\begin{proof} Suppose not. Note that from Corollaries \ref{corollary1.3} and \ref{cor:Fm_and_G_isometric} it follows that $H^{Im}:=rng(G^m)=rng(F^m)$. Let $H_{0}$ be the closure of $H^{Im}$ under $B^{m}$
i.e. the complete subspace of $H^{m}$ generated by the set
$$\{ (B^{m})^{n}(F^m(f_{P}))\vert\ P\in \C[X,Y],\ n<\o\}$$
(keep in mind that $B^{m}$ is bounded).
Then $H^m$ is a complete subspace of $H_0$, closed under $A^m$ and $A^{m*}$, so we get a contradiction as in the proof of Lemma \ref{lemma3.2}, unless the norm is too large to guarantee uniqueness. But this we can rule out by showing the following:

\begin{claim}\label{claim3.4.1} The operator norm of $B^{m}\raj H_{0}$ is $\le 1$.
  \end{claim}

\begin{proof} For a contradiction suppose that there is
$\e >0$, $k<\o$ and $P\in\C [X,Y]$ such that
$\Vert B^{m}((B^{m})^{k}(F^m(f_{P})))\Vert >(1+\e )\Vert (B^{m})^{k}(F^m(f_{P}))\Vert$.
Since $A$ is not the identity, $\mu(\{ 1\} )<1$.
So there are $\d >0$  and $V\subseteq\s (A)$
such that $\mu (V)>\d$, for all $\l\in V$,
$\vert f_{P}(\l )\vert >\d$ and if $\l =e^{iq}\in V$,
then $\vert q\vert >\d$.

For all $N<\o$, let $H^{0}_{N}$
be the subspace of $H_{N}$ generated by those $u_{N}(n)$
for which $\l_{N}(n)=e^{iq}$ for some  $q\in [-\pi ,\p]$
such that $\vert q\vert >1$ and let $H^{1}_{N}$
be the subspace of $H_{N}$ generated by those $u_{N}(n)$
for which $\l_{N}(n)=e^{iq}$ for some  $q\in [-\pi ,\p]$
such that $\vert q\vert\le 1$. Then
$H^{1}_{N}$ is the orthogonal complement of $H^{0}_{N}$. Further let $H^2_N$ be the subspace of $H_N$ generated by those $u_N(n)$ for which $\lambda_N(n)\in V$.
Let $v_{i}(N)$ be the projection of $F_{N}(f_{P})$ to $H^{i}_{N}$, $i<3$.
Let $S^{*}$ be the set of all $\l\in S$ such that
$Re(\l )\ge Re(e^{i})$.
Notice that since $Re(e^{i})\in NI$
(see the assumption immediately after the proof of Fact \ref{fact3.1}),
$\mu (S^{*})$ and $\mu (S-S^{*})$
are ultralimits of
$\mu_{N}(S^{*})$ and $\mu_{N}(S-S^{*})$.
Let  $Z$  be the supremum of all $\vert f_{P}(\l )\vert$ for $\l\in S$.

Now
\begin{itemize}
    \item[(a)] $\norm{(B_N)^k(F_N(f_P))}\geq\norm{(B_N)^k(v_2(N))}=$
    $$=\norm{(B_N)^k\sum_{\l_N(n)\in V}\xi_N(n)f_P(\l_N(n))u_N(n)}\geq \d^{k+3/2}$$
in a set
of $N$'s that belongs to  $\U$,
\item[(b)] $\norm{B_N((B_N)^k(F_N(f_P)))}=\norm{(B_N)^{k+1}(v_0(N)+v_1(N))}\leq$
$$\pi^{k+1}\sqrt{\mu_N(S-S^*)}Z+\norm{v_1(N)}$$
in a set of $N$'s that belongs to $\U$,
\item[(c)] $\mu_{N}(S-S^{*})\le (\e\d^{k+3/2}/(2\pi^{k+1}Z))^{2}$
in a set of $N$'s that belong to $\U$ (since $\mu (S-S^{*})=0$ by the assumption $\norm{B}\leq 1$ and the construction of the measure for $A$).
\end{itemize}
By putting these together, we get
$$
\norm{B_N((B_N)^k(F_N(f_P)))}\leq \pi^{k+1}\sqrt{\mu_N(S-S^*)}Z+\norm{v_1(N)}\leq (frac{\e}{2}+1)\norm{(B_N)^k(F_N(f_P))}
$$
in a set of $N$'s that belongs to $\U$, which is a contradiction.
\renewcommand{\qedsymbol}{}
\end{proof}\qed Claim \ref{claim3.4.1}

\end{proof}

Now we can define an operator $B_{D}$ on $L_{2}(S,\mu )$ the natural way:
For $f\in C(S)$, we let
$B_{D}(f)(\l )=K_B(\l )f(\l )$, where
$K_B(\l )=q$ if $q\in ]-\pi ,\pi ]$ and
$\l =pe^{iq}$ for some real number $p$.
Now it is easy to see that $e^{iB_{D}}=A_{D}$
and thus by Fact \ref{fact3.1},
$B^{m}\raj H^{Im}=F^m B_{D}(F^m)^{-1}$.

If we want for $B$ exactly the same setup as for $A$ we can do the following:
Let $S_{B}=]-\pi ,\p]$ and define a Borel measure $\mu_{B}$ on $S_{B}$ by letting
$\mu_{B}(X)=\mu (K_{B}^{-1}(X))$. Then we can define an operator
$B'_{D}$ on $L_{2}(S_{B},\mu_{B})$ so that for all
$f\in C(S_{B})$, $B'_{D}(f)(q)=qf(q)$.
Finally we notice that if we define a continuous
$F_{B}: L_{2}(S,\mu )\rightarrow L_{2}(S_{B},\mu_{B})$
so that for $f\in C(S)$,
$F_{B}(f)(q)=f(e^{iq})$ we get an isometric isomorphism
and $B'_{D}=F_{B}B_{D}F_{B}^{-1}$.

\section{Rigged Hilbert spaces}\label{sec:4}

Rigged Hilbert spaces are a way of accounting for objects that don't fit into a Hilbert space, such as Dirac delta functions. A rigged Hilbert space consists of a Hilbert space $H$ and a subspace $\Phi$ (with a finer norm topology than the one given by the norm in $H$). One then has
$$
\Phi\subset H\subset \Phi^*,
$$
where $\Phi^*$ is the dual space of $\Phi$. The elements in $\Phi^*$ are antilinear functionals, and if $\Phi$ was suitably chosen with respect to a cyclic, essentially self adjoint operator $B$, then the functionals act as generalized eigenvectors corresponding to values in the spectrum of $B$ (see, e.g., \cite{Ma}).

In this section we set out to find something similar to $\Phi^*$ by considering different norms in our ultraproduct space $H^u$, and finding extensions of $H^{Im}$.
Here, as  well as in the next section, it does not matter whether we look at
a self-adjoint bounded
$B$ or $e^{iB}$, and thus we return to the  assumptions of the first two sections,
i.e., that $A$ is bounded, $A\circ A^{*}=A^{*}\circ A=r_{A}I$ and $\phi$
is a cyclic vector. Also we let $H_{N}$, $A_{N}$ etc. be as in these sections.

We will look at two norms, that end up being essentially  an $L_{\infty}$-norm and
an $L_{1}$-norm. The latter of these will be called
$\Vert\cdot\Vert_{0}$ in order to avoid confusion. Strictly speaking
both of these may be only seminorms in the
finite dimensional spaces $H_{N}$, but we will call them norms for simplicity. In the ultraproduct we will  fraction out
all vectors to which the seminorm gives an infinitesimal value, and then
the seminorms become norms, and we obtain spaces $H^m_0$ and $H^m_\infty$ somewhat similarly to how we obtained $H^m$ for the $\norm{\cdot}_2$-norm before. Then we will show how generalized distributions embed into $H^m_0$ such that values of the distribution can be calculated via a pairing function resembling an inner product.

\begin{definition}
Consider the following functions
from $H_{N}$ to $\C$:
Let $X\subseteq\C$ be a closed set such that $X\subseteq S$. Then
$$\Vert\sum_{n=0}^{D_{N}-1}a_{n}u_N(n)\Vert^{X}_{\infty}=\sup\{\xi_{N}(n)^{-1}\vert a_{n}\vert\ \vert\ \l_{N}(n)\in X\} ,$$
where by $0^{-1}$ we mean $0$
and
$$\Vert\sum_{n=0}^{D_{N}-1}a_{n}u_N(n)\Vert_{0}=\sum_{n=0}^{D_{N}-1}\xi_{N}(n)\vert a_{n}\vert .$$
Keep in  mind that for all $N$ and $n$, $\xi_{N}(n)$ is a non-negative real number $\le 1$.
\end{definition}

When constructing $H^m$ from the $\norm{\cdot}_2$-norm we just fractioned out infinitesimals and discarded elements of infinite length. For our other norms we need to be a bit more careful.
We start with $\Vert \cdot\Vert_{0}$.
As with the Hilbert space norm, this gives
first a function from $H^{u}$ to $\C^{u}$,
denoted by $\Vert \cdot\Vert^{u}_{0}$
(as before),
and an equivalence  relation $\sim_{0}$.

\begin{definition}\label{def:good}
Let $n<\o$, $Y\subseteq Sq_{n}$, $R^{Y}_{0}=\cup_{R\in Y}R$, $S^{Y}_{0}=\cup_{R\in Y}\ol R$,
$R^{Y}_{1}=\cup_{R\in Sq_{n}-Y}R$ and $S^{Y}_{1}=\cup_{R\in Sq_{n}-Y}\ol R$.
We say that $Y$ is \emph{good}
if the distance between $\s (A)$ and the closure of $S^{Y}_{1}$
is $>0$ (then $\s (A)\subseteq S^{Y}_{0}$). We say that
$$(\sum_{k=0}^{D_{N}-1}a^{N}_{k}u_{N}(k))_{N<\o}/\U\in H^{u}$$
(and $(\sum_{k=0}^{D_{N}-1}a^{N}_{k}u_{N}(k))_{N<\o}$)
is \emph{$0$-good} if for all good $Y$
$$(\sum_{\l_{N}(k)\in S^{Y}_{1}}\xi_{N}(k)\vert a^{N}_{k}\vert )_{N<\o}/\U/\sim =0.$$

Then we let  $H^{m0}$  be the set of all $0$-good
$f/\U\in H^{u}$ such that
$\Vert f/\U\Vert^{u}_{0}<n$ for some natural number $n$.
And we let $H^{m}_{0}=H^{m0}/\sim_{0}$.
Finally, we define the norm $\Vert \cdot\Vert_{0}$
on $H^{m}_{0}$ as before: 
$\Vert f/\sim_{0}\Vert_{0}=q$ if $q\in\C$ is the unique
element such that for all $\e >0$,
$$\{  N<\o\vert\ \vert\Vert f(N)\Vert_{0}-q\vert <\e\}\in \U.$$
\end{definition}

With this norm $H^{m}_{0}$ is a Banach space
(addition and scalar  multiplication are defined as before).
In order to simplify the notation, for all
$f\in\Pi_{N\in\o}H_{N}$,  we write
$[f]_{0}=f/\U/\sim_{0}$.
Notice that if
$f,g\in H^{u}$ and $\Vert f-g\Vert^{u}_{0}$ is infinitesimal
and $f$ is $0$-good,
then so is $g$. Notice that for all $f\in C(S)$, $F(f)\in H^{m0}$
and $\Vert f\Vert_{1}=\Vert [F(f)]_{0}\Vert_{0}$: The first of the claims is immediate
since $f$ is bounded and $\mu(S)$ finite, and  the second follows from the fact that $f$
is uniformly continuous and thus Riemann integrable.

Let us then look at the norm $\Vert \cdot\Vert_{\infty}$
First we will need a modified version of the $\sup$-norm on $C(S)$:

\begin{definition}
  We define a seminorm
$\Vert f\Vert^\s_{\infty}=\sup\{\vert f(\l )\vert\ \vert\ \l\in\s (A)\}$
on $C(S)$ and
write $L_{\infty}(S,\mu )$ for $C(S)/\sim^\s_{\infty}$,
where $f\sim^\s_{\infty}g$ if $\Vert f-g\Vert^\s_{\infty}=0$.
\end{definition}

Then $\Vert \cdot\Vert^\s_{\infty}$ is a norm on $L_{\infty}(S,\mu )$.

\begin{definition}
For all $\e >0$, we let $X_{\e}$ be the set of all $\l\in\C$
whose distance to $\s (A)$ is $\le\e$ and we let
$\Vert (v_{N})_{N<\o}/\U\Vert^{\e}_{\infty}$ be the unique $q\in\C$
such that for all $\d >0$
$$\{ N<\o\vert\ \vert\Vert v_{N}\Vert^{X_{\e}}_{\infty}-q\vert <\d\}\in \U$$
if there is such a $q$ and otherwise the value is $\infty$
(i.e. we do as before).

Now let $v=(\sum_{k=0}^{D_{N}-1}a^{N}_{k}u_{N}(k))_{N<\o}/\U\in H^{u}$.
Then we let
$\Vert v\Vert_{\infty}=\lim_{\e\rightarrow 0}\Vert v\Vert^{\e}_{\infty}$
if there are $p<\o$ and $Z\in \U$ for which
$\vert a^{N}_{k}(\xi_{N}(k))^{-1}\vert <p$ for all $N\in Z$ and $k<D_{N}$
and otherwise we let $\Vert v\Vert_{\infty}=\infty$. Notice that thus having bounded $\Vert\cdot\Vert_\infty$-norm in $H^u$ requires the $\Vert\cdot\Vert^S_\infty$-norm to be bounded in a $\U$-large set of $H_N$s although the value is determined by what happens 'close to $\sigma(A)$'.
We then let $H^{m\infty}$ be the set of all
$v\in H^{u}$ for which $\Vert v\Vert_{\infty}<\infty$ and
$H^{m}_{\infty}=H^{m\infty}/\sim_{\infty}$, where $\sim_{\infty}$ is the relation
$\Vert v-u\Vert_{\infty}=0$.
We write $[f]_{\infty}$ for $f/\U/\sim_{\infty}$.
\end{definition}

\begin{lemma}\label{lemremark4.1} \begin{itemize}
    \item[(i)] For all $f\in C(S)$, $F(f)\in H^{m\infty}$ and
      $\Vert f\Vert^\s_{\infty}=\Vert F(f)\Vert_{\infty}$.
      \item[(ii)] Every element of $H^{m\infty}$ has finite $\norm{\cdot}_2$-norm, and for all $v\in H^u$, $\norm{v}^u_0\leq\norm{v}_2$. For $f\in C(S)$, $\norm{F(f)}_0\leq\norm{F(f)}_2\leq \norm{F(f)}_\infty$.
\end{itemize}
      \end{lemma}
\begin{proof}
(i)  The first claim is clear since $f$
is bounded. For the second, we notice 
that $f$ is uniformly continuous
and thus
the direction $\Vert f\Vert^\s_{\infty}\ge\Vert F(f)\Vert_{\infty}$ follows easily.
The direction
$\Vert f\Vert^\s_{\infty}\le\Vert F(f)\Vert_{\infty}$ follows from the equality of $\s(A)$ and $\s^*(A)$
and the continuity of $f$.

(ii) The first claim is a straight forward calculation, the second follows from Hölder's inequality, as $\mu(S)=1$. The chain of inequalities follows from the classical result.
\end{proof}

Let $H^{m}_{*}=\bigcup_{k\in\{ 0,2,\infty\}}H^{m}_{k}$. We define
a pairing $\langle \cdot|\cdot\rangle$  on $H^{m}_{*}$ the same way as before:
For $[f]_{k},[g]_{r}\in H^{m}_{*}$, $\langle [f]_{k}|[g]_{r}\rangle=q$ if
$q\in\C$ is such that for all $f'\in [f]_{k}$  and
$g'\in[g]_{r}$, $\langle f'|g'\rangle^{u}$ is infinitely close to $q$. 
If no such $q$ exists we let $\langle [f]_{k}|[g]_{r}\rangle=\infty$
(meaning that $\langle [f]_{k}|[g]_{r}\rangle$ is not well-defined).

\begin{lemma}\label{lemma4.2} If $x\in H^{m}_{\infty}$ and $y\in H^{m}_{0}$,
then $\langle x|y\rangle\ne\infty$.
\end{lemma}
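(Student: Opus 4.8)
The statement is a pairing of an $L_\infty$-type element against an $L_1$-type element, so morally it is just Hölder's inequality $|\langle x | y\rangle| \le \norm{x}_\infty \norm{y}_0$ carried out inside the ultrapower. The plan is to pick concrete representatives and estimate the ultraproduct of the finite-dimensional inner products, then argue that the value is well-defined (independent of the choice of representative) because the two ``bad'' phenomena — the $\norm{\cdot}_\infty$-norm being controlled only near $\sigma(A)$, and the $\norm{\cdot}_0$-norm possibly having mass off $\sigma(A)$ — cancel thanks to the $0$-goodness condition in the definition of $H^{m0}$.

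First I would fix $x = (\sum_k a^N_k u_N(k))_{N<\o}/D/\sim_\infty \in H^m_\infty$ and $y = (\sum_k b^N_k u_N(k))_{N<\o}/D/\sim_0 \in H^m_0$, and choose $p<\o$ and $X\in D$ so that $|a^N_k (\xi_N(k))^{-1}| < p$ for all $N\in X$ and $k<D_N$ (this is exactly what finiteness of $\norm{x}_\infty$ gives), and also a bound $\norm{y}^u_0 < c$ for some $c\in\o$. Since the $u_N(k)$ form an orthonormal basis of $H_N$, $\langle \sum_k a^N_k u_N(k) | \sum_k b^N_k u_N(k)\rangle_N = \sum_k \overline{a^N_k} b^N_k$, and
\[
\Bigl| \sum_{k<D_N} \overline{a^N_k} b^N_k \Bigr| \le \sum_{k<D_N} |a^N_k (\xi_N(k))^{-1}| \cdot \xi_N(k) |b^N_k| < p \sum_{k<D_N} \xi_N(k)|b^N_k| = p \norm{\sum_k b^N_k u_N(k)}_0 < pc
\]
for all $N\in X$. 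Hence the element $\langle x|y\rangle^u \in \C^u$ is finite (its absolute value is below the standard natural number $pc$ on a set in $D$), so there is $q\in\C$ infinitely close to it; this $q$ is the candidate value.

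The main obstacle is showing that $q$ does not depend on the chosen representatives — i.e.\ that $\langle x|y\rangle \ne \infty$ in the sense of the pairing on $H^m_*$, which requires $\langle f'|g'\rangle^u$ to be infinitely close to the \emph{same} $q$ for every $f' \sim_\infty (a^N_k)$ and $g' \sim_0 (b^N_k)$. If I replace $(a^N_k)$ by $(a^N_k + \delta^N_k)$ with $(\delta^N_k)_N/D$ having $\sim_\infty$-value $0$, the change in the inner product is $\sum_k \overline{\delta^N_k} b^N_k$; since $\norm{\delta}_\infty = 0$ means, for every $\e>0$, $\sup\{|\delta^N_k(\xi_N(k))^{-1}| : \l_N(k)\in X_\e\}$ is infinitesimal, I bound $\sum_{\l_N(k)\in X_\e} |\delta^N_k(\xi_N(k))^{-1}|\xi_N(k)|b^N_k|$ by (infinitesimal)$\cdot\norm{y}^u_0$, which is infinitesimal, while the remaining sum $\sum_{\l_N(k)\notin X_\e} |\delta^N_k| |b^N_k| \le p \sum_{\l_N(k)\notin X_\e}\xi_N(k)|b^N_k|$ is controlled — here is where I invoke $0$-goodness of $y$: choosing $\e$ small enough and the good set $Y\subseteq Sq_n$ so that $\overline{S_1^Y} \subseteq \C\setminus X_\e$ (possible since $\sigma(A)$ and $\overline{S_1^Y}$ have positive distance), the tail $(\sum_{\l_N(k)\in S^Y_1}\xi_N(k)|b^N_k|)_N/D/\sim = 0$, so that piece is infinitesimal too. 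Thus the total change is infinitesimal, and symmetrically for a perturbation of $(b^N_k)$ by a $\sim_0$-null sequence (using the uniform bound $p$ on $|a^N_k(\xi_N(k))^{-1}|$ and that $\sim_0$-null means $\norm{\cdot}^u_0$ infinitesimal). Therefore $\langle x|y\rangle = q$ is well-defined and finite, proving the lemma. $\qed$
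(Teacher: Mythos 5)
Your proposal is essentially the paper's argument: a H\"older-type estimate in the ultrapower, splitting the sum over the eigenbasis into a region near $\s(A)$ (where the $\norm{\cdot}_\infty$-bound on $x$ does the work) and a region away from $\s(A)$ (where the crude coefficient bound $p$ together with the $0$-goodness of $y$ does). There is only an organizational difference: the paper proves the single sharp inequality $|\langle(x_N)_{N<\o}|(y_N)_{N<\o}\rangle| < qr$ for all $q>\norm{x}_\infty$ and $r>\norm{y}_0$, and well-definedness then comes for free by applying that same inequality to a difference of two representatives of $x$ (whose $\norm{\cdot}_\infty$-value is $0$, so $q$ may be taken arbitrarily small); you instead prove a coarse bound $pc$ first and handle well-definedness separately by a perturbation argument. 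Both organizations are valid.

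There is, however, one slip to fix. In the perturbation step you choose a good $Y\subseteq Sq_n$ with $\overline{S_1^Y}\subseteq\C\setminus X_\e$ and then want to conclude, from $0$-goodness, that $\sum_{\l_N(k)\notin X_\e}\xi_N(k)|b_k^N|$ is infinitesimal. But $0$-goodness controls $\sum_{\l_N(k)\in S_1^Y}\xi_N(k)|b_k^N|$, and the containment you wrote goes the wrong way: it gives $\{k : \l_N(k)\in S_1^Y\}\subseteq\{k : \l_N(k)\notin X_\e\}$, which bounds the controlled sum by the one you are trying to control. You need the reverse inclusion, $S\setminus X_\e\subseteq S_1^Y$ (with $Y$ still good), so that every index with $\l_N(k)\notin X_\e$ is already counted in the $S_1^Y$-sum. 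This is exactly what the paper arranges: take $n$ large enough that the rectangles in $Sq_n$ have diameter less than $\e$, and put into $Sq_n\setminus Y$ every rectangle whose closure meets $S\setminus X_\e$; then $\overline{S_1^Y}$ stays at positive distance from $\s(A)$, so $Y$ is good, while $S\setminus X_\e\subseteq S_1^Y$ holds. With this correction your proof is complete.
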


\begin{proof} Let $x=[(x_{N})_{N<\o}]_{\infty}$,
where $$x_{N}=\sum_{n=0}^{D_{N}-1}a^{N}_{n}u_{N}(n)$$
and $y=[(y_{N})_{N<\o}]_{0}$,
where $$y_{N}=\sum_{n=0}^{D_{N}-1}b^{N}_{n}u_{N}(n)$$
and let $q>\Vert x\Vert_{\infty}$ and $r>\Vert y\Vert_{0}$.
Let $p<\o$,
be such that for all $N<\o$ and $n<D_{N}$,
$\vert a^{N}_{n}(\xi_{N}(n))^{-1}\vert <p$.
It is enough to show that
$$\vert \langle (x_{N})_{N<\o}|(y_{N})_{N<\o}\rangle\vert <qr.$$

Let $\e>0$ be such that $\Vert x\Vert^{\e}_{\infty}<q$.
Then we choose $n<\o$ so that there is a good $Y\subseteq Sq_{n}$ such that
if as before we denote $S^{Y}_{0}=\cup_{R\in Y}\ol R$ and
$S^{Y}_{1}=\cup_{R\in Sq_{n}-Y}\ol R$, then $S-X_{\e}\subseteq S^{Y}_{1}$.
Then
$$\vert \langle x_{N}|y_{N}\rangle\vert\le\sum_{n=0}^{D_{N-1}}\vert a_{n}^{N}\vert\ \vert b^{N}_{n}\vert <$$
$$< \sum_{\l_{N}(n)\in S^{Y}_{0}}\xi_{N}(n)q\vert b^{N}_{n}\vert +\sum_{\l_{N}(n)\in S^{Y}_{1}}\xi_{N}(n)p\vert b^{N}_{n}\vert\le$$
$$\le q\sum_{\l_{N}(n)\in S^{Y}_{0}}\xi_{N}(n)\vert b^{N}_{n}\vert +p\sum_{\l_{N}(n)\in S^{Y}_{1}}\xi_{N}(n)\vert b^{N}_{n}\vert
<qr +p\sum_{\l_{N}(n)\in S^{Y}_{1}}\xi_{N}(n)\vert b^{N}_{n}\vert $$
in a set of $N<\o$ that
belongs to $\U$. Now the claim follows from the assumption that $(y_{N})_{N<\o}/\U$ is $0$-good.
\end{proof}

Next we set out to find distributions as vectors in our space $H^u$. 

\begin{definition}
We say that an antilinear map $f:D(S)\rightarrow\C$
is a \emph{generalized distribution} if it is bounded in the sense
of $\Vert \cdot\Vert^\s_{\infty}$ i.e. there is $K<\o$ such that
$\vert f(x)\vert\le K\Vert x\Vert^\s_{\infty}$.
Then $f$ extends to $C(S)$ by continuity and thus
by generalized distributions we actually mean bounded antilinear
maps $f:C(S)\rightarrow\C$.
\end{definition}

We now fix a generalized distribution $\theta$ for the remainder of this section.

In general a distribution emerges through its test functions, and we will construct test functions picking out rectangles in the spectrum, similarly to how the measure was built. This will let us 'copy over' the values of $\theta$ to our vector. However, we need to be careful to make sure our rectangles are well behaved.

\begin{definition}
We return to the lines
$I_{r}$ and
$J_{r}$ and to the sets
$I_{r}^{\e}$ and
$J_{r}^{\e}$ from Section \ref{sec:2}.
For $X\subseteq\C$, we say that $f\in C(S)$ is an \emph{$X$-function} if
for all $x\in S$, $\vert f(x)\vert\le 1$ and for all $x\in S-X$, $f(x)=0$.
Notice that we do not require that $X\subseteq S$ but still $dom(f)=S$.

We say that a line $I_{r}$ is \emph{good (for $\theta)$} if for all $\d>0$ there is $\e >0$
such that for all $I^{\e}_{r}$-functions $f$,
$\vert \theta (f)\vert <\d$.
$J_{r}$ being good
is defined similarly.
\end{definition}

\begin{lemma}\label{lemma4.3} There are at most countably many $r\in [-M,M]$
such that $I_{r}$ is not good, and the same is true for the lines $J_{r}$.
\end{lemma}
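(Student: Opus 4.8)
The plan is to follow the template of the proof of Lemma \ref{lemma2.1}(ii), with the total mass bound $\mu^{n}(S)=1$ there replaced by the boundedness constant of $\theta$. Fix $C<\o$ with $\vert\theta(f)\vert\le C\Vert f\Vert^\s_{\infty}$ for all $f\in C(S)$, and suppose for contradiction that uncountably many $r\in[-M,M]$ have $I_{r}$ not good. Unwinding the definition, each such $r$ comes with a $\d_{r}>0$ such that for every $\e>0$ there is an $I^{\e}_{r}$-function $f$ with $\vert\theta(f)\vert\ge\d_{r}$. Since $(0,\infty)=\bigcup_{n}[1/n,\infty)$, by the pigeonhole principle I would pass to an uncountable subfamily on which a single $\d>0$ works for all members simultaneously.

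Next I would pick, from that subfamily, finitely many distinct reals $r_{1},\dots,r_{m}$ with $m\d>C$, together with $\e_{i}>0$ small enough (less than half the least of the numbers $\vert r_{i}-r_{j}\vert$, $i\ne j$) that the vertical strips $I^{\e_{i}}_{r_{i}}$ are pairwise disjoint, and for each $i$ an $I^{\e_{i}}_{r_{i}}$-function $f_{i}$ with $\vert\theta(f_{i})\vert\ge\d$; in particular $\theta(f_{i})\ne 0$. Put $c_{i}=\theta(f_{i})/\vert\theta(f_{i})\vert$, so $\vert c_{i}\vert=1$ and, since $\theta$ is antilinear, $\theta(c_{i}f_{i})=\overline{c_{i}}\,\theta(f_{i})=\vert\theta(f_{i})\vert$. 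Now set $g=\sum_{i=1}^{m}c_{i}f_{i}\in C(S)$. The sets $\{x\in S:f_{i}(x)\ne 0\}\subseteq I^{\e_{i}}_{r_{i}}$ are pairwise disjoint and each $\vert f_{i}\vert\le 1$, so at every point at most one summand of $g$ is nonzero, whence $\Vert g\Vert^\s_{\infty}\le\Vert g\Vert_{\infty}\le 1$ (recall $\s(A)\subseteq S$). On the other hand, by additivity and antilinearity $\theta(g)=\sum_{i=1}^{m}\overline{c_{i}}\,\theta(f_{i})=\sum_{i=1}^{m}\vert\theta(f_{i})\vert\ge m\d>C$, contradicting $\vert\theta(g)\vert\le C\Vert g\Vert^\s_{\infty}\le C$. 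The statement for the horizontal lines $J_{r}$ is then obtained by the identical argument with $J^{\e}_{r}$-functions and horizontal strips.

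The one spot needing care — and the only real (and minor) obstacle — is the interaction of the antilinearity of $\theta$ with the construction of the combined test function: each $f_{i}$ must be rotated by the correct unit phase $c_{i}$ so that the contributions $\theta(c_{i}f_{i})$ add up in modulus rather than interfering, while the disjointness of the (open) strips is what keeps $\Vert g\Vert^\s_{\infty}\le 1$ at the same time. So the only genuine choice is taking the $\e_{i}$ small enough to separate the strips; everything else is bookkeeping parallel to Lemma \ref{lemma2.1}(ii).
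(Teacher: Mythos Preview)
Your proof is correct, but it differs from the paper's in two respects worth noting. First, the paper does not use a finite sum compared against the bound $C$; instead it takes a countable increasing sequence $r_{i}$ with disjoint strips and forms the \emph{infinite} sum $f=\sum_{i<\o}(1/(i+1))f^{\e_{i}}_{i}$, observing that $\Vert f\Vert_{\infty}\le 1$ while $\theta(f)=\infty$ because the harmonic series diverges. Second, to make the contributions add rather than cancel, the paper does not rotate by the exact unit phases $c_{i}$ as you do, but instead applies a further pigeonhole so that all $\theta(f^{\e_{i}}_{i})$ lie in a common octant (``roughly the same direction''), which suffices up to a constant factor. Your route is arguably cleaner: the finite sum avoids having to check that the infinite sum converges uniformly to a continuous function, and the explicit phase rotation is tighter than the sector pigeonhole; the paper's route, in exchange, never needs to name the constant $C$ explicitly.
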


\begin{proof} Suppose not. Then there are $\d>0$ and $r_{i}$, $i<\o_{1}$,
such that for all $\e>0$ and $i<\o_{1}$, there is  an $I^{\e}_{r_{i}}$-function
$f^{\e}_{i}$ such that $\vert\theta (f^{\e}_{i})\vert >\d$.
W.l.o.g. we may assume that for all $i<\o$, $r_{i}<r_{i+1}$.
Then  for all $i<\o$, we choose  $\e_{i}>0$ so that for all
$i<\o$, $r_{i}+\e_{i}<r_{i+1}-\e_{i+1}$.
Also w.l.o.g. we may assume that for all $i<\o$, the direction of
$\l_{i}=\theta (f^{\e_{i}}_{i})$ is roughly the same, e.g.
$Re(\l_{i})\ge Im(\l_{i})\ge 0$.
Let $f=\sum_{i<\o}(1/(i+1))f^{\e_{i}}_{i}$.
Clearly $f\in C(S)$ and $\Vert f\Vert_{\infty}\le 1$
but $\theta (f)=\infty$, a contradiction. \end{proof}

\begin{definition}
We say that $I_{r}$ is \emph{very good (for $\theta)$} if
it is good and 
for all $\d >0$, there is $\e >0$ such that
$\mu^{n}(I^{\e}_{r})<\d$.
$J_{r}$ being very good is defined similarly.
We say that $\e >0$ is \emph{nice for $r$} if
both $r-\e$ and $r+\e$ are nice or $<-M$ or $>M$.
\end{definition}

\begin{lemma}\label{lemma4.4}
\begin{itemize}
\item[(i)] There are at most countably many $r\in [-M,M]$
such that $I_{r}$ is not very good and the same is true for lines $J_{r}$.
\item[(ii)] For all $r$ there  are at most countably many $\e >0$ that
are not nice for $r$.
\item[(iii)] If $\e >0$ is nice for $r$, then $\mu (I_{r}^{\e})=\mu^{n}(I_{r}^{\e})$
and $\mu (J_{r}^{\e})=\mu^{n}(J_{r}^{\e})$.
\end{itemize}
\end{lemma}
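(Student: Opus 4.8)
The plan is to prove the three parts largely in parallel with the corresponding results from Section~2 (Lemma~\ref{lemma2.1}(ii) and Lemma~\ref{lemma4.3}), since "very good" is just the conjunction of "good for $\theta$" and a measure-smallness condition that is essentially the same condition appearing in the definition of "nice".

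For part (i), I would argue by contradiction exactly as in Lemma~\ref{lemma2.1}(ii) and Lemma~\ref{lemma4.3}. Suppose uncountably many $r\in[-M,M]$ have $I_r$ not very good. Each such $r$ fails because either $I_r$ is not good (for $\theta$) or because there is $\d_r>0$ with $\mu^n(I^\e_r)\ge\d_r$ for all $\e>0$. By Lemma~\ref{lemma4.3} only countably many $r$ are not good, so uncountably many $r$ must fail the measure condition; by the pigeonhole principle we may fix a single $\d>0$ such that for uncountably many $r$, $\mu^n(I^\e_r)\ge\d$ for every $\e>0$. Pick $K<\o$ with $K\d>1$, pick $K$ such reals $r_0,\dots,r_{K-1}$, and choose $\e_i>0$ small enough that the strips $I^{\e_i}_{r_i}$ are pairwise disjoint. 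Then, since $\mu^n$ is the $D$-ultralimit of the $\mu_N$, we can find $N$ with $\mu_N(I^{\e_i}_{r_i})>\d$ for all $i<K$ simultaneously, so $\mu_N(\bigcup_{i<K}I^{\e_i}_{r_i})>1$, contradicting $\mu_N(S)=1$. The statement for $J_r$ is identical with real and imaginary parts swapped.

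For part (ii), fix $r$. The set of $\e>0$ that are not nice for $r$ is contained in $\{\,\e>0 : r-\e \text{ is not nice and } -M\le r-\e\le M\,\}\cup\{\,\e>0 : r+\e \text{ is not nice and } -M\le r+\e\le M\,\}$. Both of these are images of the (by Lemma~\ref{lemma2.1}(ii)) countable set of non-nice reals under the maps $s\mapsto r-s$ and $s\mapsto s-r$, hence countable; their union is countable.

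For part (iii), suppose $\e>0$ is nice for $r$. Then $I^\e_r=\{\l\in S : |Re(\l)-r|<\e\}=\{\l\in S : r-\e<Re(\l)<r+\e\}$, and since $r-\e,r+\e$ are each nice (or lie outside $[-M,M]$, in which case the relevant side of the strip is just the boundary of $S$ and contributes nothing), this is — after intersecting the imaginary-part range with $(-M,M)$, which changes nothing by the remark that everything near $\partial S$ has naive measure zero — an element of $Sq^*$, or differs from one by a measure-zero set. By Lemma~\ref{lemma2.2}(i) we have $\mu^*(Y)=\mu^n(Y)$ for $Y\in Sq^*$, and since such $Y$ is measurable (Lemma~\ref{lemma2.2}(ii)) this gives $\mu(I^\e_r)=\mu^*(I^\e_r)=\mu^n(I^\e_r)$; the argument for $J^\e_r$ is the same. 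The only point requiring a little care is handling the cases $r\pm\e\notin[-M,M]$ and the open-versus-closed interval in the orthogonal coordinate, but these are exactly the "special cases arising from the boundary" that the paper has already agreed to suppress, so no real obstacle arises. The main (very mild) obstacle is thus purely bookkeeping: matching $I^\e_r$ up with a genuine member of $Sq^*$ so that Lemma~\ref{lemma2.2} applies verbatim.
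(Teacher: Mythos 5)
Parts (i) and (ii) are correct and match the paper's approach (the paper simply cites Lemmas~\ref{lemma2.1} and~\ref{lemma4.3}; your unpacking of the pigeonhole argument in (i) is the proof of Lemma~\ref{lemma2.1}(ii) redone inline, which is fine since the ``$I_r$ fails the measure condition'' set is not literally the non-nice set but a subset of it).

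Part (iii) has a genuine gap. You claim that when $r\pm\e$ are nice, $I_r^\e$ \emph{is} an element of $Sq^*$, or differs from one by a $\mu$-null set, and that the remaining work is ``bookkeeping'' so that Lemma~\ref{lemma2.2} applies verbatim. This is not so: $Sq^*$ is built from the \emph{fixed countable} set $NI$ of nice reals, and a nice real need not belong to $NI$. So $I_r^\e$ is in general not in $Sq^*$, and there is no reason for it to differ from a member of $Sq^*$ by a null set either --- between $r-\e$ and the nearest point of $NI$ there may well be positive $\mu$-mass. What niceness of $r\pm\e$ actually gives you is the ability to \emph{approximate} $I_r^\e$ from inside and from outside by members of $Sq^*$ whose naive measure differs from $\mu^n(I_r^\e)$ by as little as you like (pick points of $NI$ just outside/inside $r\pm\e$; the mass in the thin slivers you gain or lose is small by niceness). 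The paper's ``this can be proved as Lemma~\ref{lemma2.2}(i) was proved'' means: rerun that proof --- the squeeze between inner/outer $Sq^*$-approximants, then the compactness step against a candidate covering --- for $I_r^\e$, not reduce to the lemma as a black box. Your version would only be valid if $I_r^\e\in Sq^*$ up to a null set, which fails; once you replace the false identification with the approximation argument, the proof goes through.
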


\begin{proof} (ii): This is immediate by Lemma \ref{lemma2.1}.

(i): This is immediate by Lemma \ref{lemma2.1} and Lemma \ref{lemma4.3}.

(iii): This can be proved as Lemma  \ref{lemma2.2} (i) was proved.
\end{proof}

\begin{definition}\label{definition4.5} For all  $n<\o$, we choose $r^{n}_{i}\in [-M,M]$, $i<2^{n+2}+1$,
so that
\begin{itemize}
\item[(i)] $r^{n}_{i}<r^{n}_{i+1}$ and $r_{i+1}^{n}-r^{n}_{i}<2M/(n+1)$,
\item[(ii)] $I_{r^{n}_{i}}$ and $J_{r^{n}_{i}}$ are very good for $\theta$,
\item[(iii)] $r^{0}_{0}=-M$ and $r^{0}_{4}=M$,
\item[(iv)] for all $i<2^{n+2}+1$ there is $j<2^{n+3}+1$
such that $r^{n}_{i}=r^{n+1}_{j}$.
\end{itemize}
\end{definition}

When we talk about $I^{\e}_{r^{n}_{i}}$ or $J^{\e}_{r^{n}_{i}}$ we assume that
$\e$ is small enough so that  for all $j<2^{n+2}$,
$r^{n}_{j}+\e <r^{n}_{j+1}-\e$.

For all $n<\o$, $i,j<2^{n+2}$ and $\e >0$, we let
$B^{n}_{ij}=\{ \l\in S\vert\ r^{n}_{i}<Re(\l )<r^{n}_{i+1},\ r^{n}_{j}<Im(\l )<r^{n}_{j+1}\}$
and $R^{n}_{\e}=\cup_{i<2^{n+2}+1}(I^{\e}_{r^n_{i}}\cup  J^{\e}_{r^n_{i}})$.
We say that $\e >0$ is \emph{very nice for $n<\o$}, if it is nice 
for every $r^n_{i}$, $i<2^{n+2}$.
As in Lemma \ref{lemma4.4}, we can see that
$\mu (B^{n}_{ij})=\mu^{n}(B^{n}_{ij})$, that if $\e$ is very nice for $n$,
then $\mu (R^{n}_{\e})=\mu^{n}(R^{n}_{\e})$, and that excluding countably many
$\e$, every $\e$ is very nice for $n$.

\begin{lemma}\label{lemma4.6} For all $n<\o$ and $\d >0$, there is $\e >0$
such that $\mu (R^{n}_{\e})<\d$ and for all $R^{n}_{\e}$-functions $f$,
$\vert\theta (f)\vert <\d$.
\end{lemma}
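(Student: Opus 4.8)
The plan is to reduce the statement to the \emph{very-goodness} of the finitely many lines $I_{r^{n}_{i}}$, $J_{r^{n}_{i}}$ ($i<2^{n+2}+1$) chosen in Definition \ref{definition4.5}, using a partition of unity to cut an arbitrary $R^{n}_{\e}$-function into a bounded number of pieces, each supported in one thickened line. Write $L$ for the number of these lines, so $L=2(2^{n+2}+1)$, and for a line $\ell$ and $\d'>0$ write $\ell^{\d'}$ for the corresponding $I^{\d'}_{r^{n}_{i}}$ or $J^{\d'}_{r^{n}_{i}}$.

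First I would fix the widths. For each of the $L$ lines $\ell$ in question: by construction $\ell$ is very good for $\theta$, so I can choose $\e_{\ell}>0$ so small that $\mu^{n}(\ell^{\e_{\ell}})<\d/(3L)$ and, using that $\ell$ is good, also that every $\ell^{\e_{\ell}}$-function $g$ satisfies $\vert\theta(g)\vert<\d/(3L)$; both properties survive shrinking the width, since $\ell^{\e'}\subseteq\ell^{\e}$ makes every $\ell^{\e'}$-function an $\ell^{\e}$-function. Put $\e_{0}=\min_{\ell}\e_{\ell}>0$ and then choose $\e$ with $\e\le\e_{0}/2$, small enough to respect the standing convention $r^{n}_{j}+\e<r^{n}_{j+1}-\e$, and very nice for $n$ in the sense of Section 4 — possible because only countably many positive reals fail to be very nice for $n$. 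For the measure part, very niceness gives $\mu(R^{n}_{\e})=\mu^{n}(R^{n}_{\e})$ (as recorded after Lemma \ref{lemma4.4}), and by finite subadditivity of $\mu^{n}$ (Lemma \ref{lemma2.1}(i)) this is at most $\sum_{\ell}\mu^{n}(\ell^{\e})<L\cdot\d/(3L)<\d$.

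It remains to bound $\vert\theta(f)\vert$ for an arbitrary $R^{n}_{\e}$-function $f$. The sets $\ell^{2\e}$, one for each of the $L$ lines, together with $V:=S\setminus\ol{R^{n}_{\e}}$, form a finite open cover of the compact metric space $S$, since $\ol{\ell^{\e}}\subseteq\ell^{2\e}$ gives $\ol{R^{n}_{\e}}\subseteq\bigcup_{\ell}\ell^{2\e}$. Take a continuous partition of unity $\{\psi_{\ell}\}_{\ell}\cup\{\psi_{V}\}$ subordinate to this cover (which exists, by the standard argument for compact metric spaces): $0\le\psi\le1$ throughout, $\sum_{\ell}\psi_{\ell}+\psi_{V}=1$ on $S$, and $\psi_{\ell}$, $\psi_{V}$ vanish off $\ell^{2\e}$, $V$ respectively. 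Since $\{f\ne0\}\subseteq R^{n}_{\e}$ while $\psi_{V}\equiv0$ on $R^{n}_{\e}$, we get $\psi_{V}f\equiv0$, so $f=\sum_{\ell}\psi_{\ell}f$, a finite sum of $L$ terms. Each $\psi_{\ell}f$ is continuous, has $\vert\psi_{\ell}f\vert\le\vert f\vert\le1$, and vanishes off $\ell^{2\e}$, hence is an $\ell^{2\e}$-function; as $2\e\le\e_{\ell}$ it is in particular an $\ell^{\e_{\ell}}$-function, so $\vert\theta(\psi_{\ell}f)\vert<\d/(3L)$. By additivity of $\theta$, $\vert\theta(f)\vert\le\sum_{\ell}\vert\theta(\psi_{\ell}f)\vert<L\cdot\d/(3L)<\d$, as desired.

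I expect the only genuine difficulty to be this decomposition step: one needs to split $f$ into a number of single-strip pieces that depends only on $n$ (not on $f$), each still bounded by $1$ in modulus and honestly supported inside one thickened line, which is exactly what the partition-of-unity argument buys; and one must keep enough slack between the two widths $\e$ and $2\e$ so that the cut-off pieces still sit inside the width at which goodness of the line was invoked. Everything else — the behaviour of $\mu^{n}$ on finite collections and its agreement with $\mu$ at nice widths — is already packaged in Lemmas \ref{lemma2.1} and \ref{lemma4.4}.
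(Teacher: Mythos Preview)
Your argument is correct and follows the same overall strategy as the paper: split an $R^{n}_{\e}$-function into a bounded (depending only on $n$) number of single-strip pieces and apply very-goodness of each line. The only difference is in how the splitting is carried out. The paper decomposes $f$ directly into $I^{\e}_{r^{n}_{i}}$- and $J^{\e}_{r^{n}_{i}}$-functions at the \emph{same} width $\e$, handling the crossings of vertical and horizontal strips by an ad hoc continuous extension (define the vertical piece first on a crossing square, then take the horizontal piece as the remainder). You instead invoke a partition of unity subordinate to the doubled strips $\ell^{2\e}$, which is cleaner and avoids the crossing bookkeeping, at the cost of needing the slack $2\e\le\e_{\ell}$ so that the resulting pieces still live where goodness was secured. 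Both work; the paper's version is slightly more elementary, yours slightly more systematic.
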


\begin{proof} For all $R^{n}_{\e}$-functions $f$ there are
$I^{\e}_{r^{n}_{i}}$- and $J^{\e}_{r^{n}_{i}}$-functions $f_{i}$ and $g_{i}$, $i<2^{n+2}$,
such that $f=\sum_{i<2^{n+2}}(f_{i}+g_{i})$. The only nontrivial part here are the 'crossings' between $I$ and $J$ lines, and there one can define $f_i$ to be any continuous continuation from the values it has on the upper and lower edge of the crossing and 0 on the left and right edge, and define $g$ from the differences $f-f_i$. Then the claim follows easily.
\end{proof}

For all  $\l\in\C$, $n<\o$, $i,j<2^{n+2}$ and $\e >0$, we choose a $B^{n}_{i,j}$-function
$f^{n\e\l}_{ij}$ so that for all $x\in\C$, if $r^{n}_{i}+\e<Re(x)<r^{n}_{i+1}-\e$
and  $r^{n}_{j}+\e<Im(x)<r^{n}_{j+1}-\e$, then $f^{n\e\l}_{ij}(x)=\l$.
We write $\theta (B^{n}_{ij},\l )$ for $\lim_{\e\rightarrow 0}\theta (f^{n\e\l}_{ij})$.
Notice that this does not depend on the choice of functions $f^{n\e\l}_{ij}$.
Notice also that $\theta (B^{n}_{ij},\l )=\ol\l \theta(B^{n}_{i,j},1)$.

\begin{definition}\label{definition4.7} Suppose $n:\o\rightarrow\o$ and $\e :\o\rightarrow\R_{+}$.
We say that the pair $(n,\e )$ is \emph{$\theta$-good} if the following
holds:
\begin{itemize}
\item[(i)]  For all $0<m<\o$, $\{ N<\o\vert\ n(N)>m,\ \e(N)<1/m\}\in \U$.
\item[(ii)] There is a $U=U(\theta ,n,\e)\in \U$ such that for all $N\in U$
if $n=n(N)>0$, then
the following holds: Let $\e =\e (N)>0$,
$\d'=\d'(N)=\min\{ \mu_{N}(B^{n}_{ij})\vert\ i,j<2^{n+2},\ \mu (B^{n}_{ij})>0\}$
where
$\mu_{N}(B^{n}_{ij})=\sum_{\l_{N}(k)\in B^{n}_{ij}}\xi_{N}(k)^{2}$,
and let $\d =\d (N)=\d'/(2^{3(n+2)})$. Then
\begin{itemize}
\item[(a)] for all $R^{n}_{\e}$-functions $f$, $\vert\theta (f)\vert <\d$,
\item[(b)] $\sum_{\l_{N}(i)\in R^{n}_{\e}}\xi_{N}(i)^{2}<\d$,
\item[(c)]  for all $i,j <2^{n+2}$, $\vert\mu_{N}(B^{n}_{ij})-\mu (B^{n}_{ij})\vert <\d$.
\end{itemize}
\item[(iii)] $\e (N)$ is very nice for $n(N)$.
\end{itemize}
\end{definition}

Notice that if $(n,\e )$ is $\theta$-good and for all $N<\o$,
$0<\e' (N)<\e (N)$, and $\e'$ is very nice for $n(N)$, then $(n,\e')$ is $\theta$-good. So
keeping in mind that the set of $\e$ very nice for $n$ is dense,
we can always
assume e.g. that $\e (N)$ is such that for all
$i<2^{n(N)+2}$,  $\e (N)<(r^{n(N)}_{i+1}-r^{n(N)}_{i})/3$.

\begin{lemma}\label{lemma4.8} There is a $\theta$-good pair $(n,\e )$ for which we can choose
$U(\theta ,n,\e )=\o$.
\end{lemma}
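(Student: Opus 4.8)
The plan is to run a two-stage argument. First, for each $m<\o$ I fix a concrete pair $(n_m,\e_m)$ — a positive integer together with a small positive real — and a set $V_m\in D$ on which clauses (a)--(c) of Definition~\ref{definition4.7}(ii) hold with $n=n_m$, $\e=\e_m$, and for which $\e_m$ is very nice for $n_m$. Second, I glue these into functions $n,\e\colon\o\to\o\times\R_+$ by taking, at a given $N$, the largest admissible index $\le N$ (and the value $n(N)=0$ when there is none), so that clause (ii) becomes non-vacuous exactly where I have arranged for it to hold; this produces $U(\theta,n,\e)=\o$.

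For the first stage, fix $m$ and put $n=n_m:=m+1$. Since $\mu(S)=1$ and the finitely many very good grid lines $I_{r^{n}_{i}},J_{r^{n}_{i}}$ are $\mu$-null, $\sum_{i,j}\mu(B^{n}_{ij})=1$, so the (finite, $N$-independent) index set $P=\{(i,j)\mid i,j<2^{n+2},\ \mu(B^{n}_{ij})>0\}$ is nonempty and $\d'_{\infty}:=\min_{(i,j)\in P}\mu(B^{n}_{ij})>0$. Put $\d_{\infty}:=\d'_{\infty}/2^{3(n+2)}$. By Lemma~\ref{lemma4.6} there is $\e_{0}>0$ with $\mu(R^{n}_{\e_{0}})<\d_{\infty}/4$ and $\vert\theta(f)\vert<\d_{\infty}/4$ for every $R^{n}_{\e_{0}}$-function $f$. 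Now shrink to some $\e_m<\min(\e_{0},1/m)$ that is very nice for $n$ (only countably many $\e$ fail this, by the remark after Definition~\ref{definition4.5}) and small enough that $r^{n}_{j}+\e_m<r^{n}_{j+1}-\e_m$ for all $j<2^{n+2}$. Since $R^{n}_{\e_m}\subseteq R^{n}_{\e_{0}}$, every $R^{n}_{\e_m}$-function is an $R^{n}_{\e_{0}}$-function, so $\mu(R^{n}_{\e_m})<\d_{\infty}/4$ and $\vert\theta(f)\vert<\d_{\infty}/4$ for all $R^{n}_{\e_m}$-functions $f$ as well.

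The point to watch is that the cutoffs $\d'(N),\d(N)$ in Definition~\ref{definition4.7}(ii) depend on $N$. Because $P$ does not depend on $N$ and $\mu(B^{n}_{ij})=\mu^{n}(B^{n}_{ij})=lim_{D}\mu_{N}(B^{n}_{ij})$ for each $(i,j)$, the quantity $\d'(N)=\min_{(i,j)\in P}\mu_{N}(B^{n}_{ij})$ converges along $D$ to $\d'_{\infty}$; likewise $\mu(R^{n}_{\e_m})=\mu^{n}(R^{n}_{\e_m})=lim_{D}\mu_{N}(R^{n}_{\e_m})$ since $\e_m$ is very nice for $n$. Hence the three sets $\{N\mid\d(N)>\d_{\infty}/2\}$, $\{N\mid\mu_{N}(R^{n}_{\e_m})<\d_{\infty}/4\}$ and $\{N\mid\vert\mu_{N}(B^{n}_{ij})-\mu(B^{n}_{ij})\vert<\d_{\infty}/4\text{ for all }i,j<2^{n+2}\}$ all lie in $D$; let $V_m$ be their intersection, so $V_m\in D$. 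On $V_m$: clause (a) holds, as $\vert\theta(f)\vert<\d_{\infty}/4<\d_{\infty}/2<\d(N)$ for every $R^{n}_{\e_m}$-function $f$; clause (b) holds, as $\sum_{\l_{N}(i)\in R^{n}_{\e_m}}\xi_{N}(i)^{2}=\mu_{N}(R^{n}_{\e_m})<\d_{\infty}/4<\d(N)$; clause (c) holds directly; and $\e_m$ is very nice for $n_m$. Note also $n_m>m$ and $\e_m<1/m$.

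For the gluing, fix once and for all some $\e_{*}>0$ very nice for $0$. For $N<\o$ let $m(N)=\max\{k\le N\mid N\in V_{k}\}$ if this set is nonempty, and then set $n(N)=n_{m(N)}$, $\e(N)=\e_{m(N)}$; otherwise set $n(N)=0$, $\e(N)=\e_{*}$. Then clause (iii) holds at every $N$. For clause (ii) with $U=\o$: if $n(N)>0$ then $N$ is of the first kind, so $N\in V_{m(N)}$ and clauses (a)--(c) hold at $N$ for the pair $(n(N),\e(N))=(n_{m(N)},\e_{m(N)})$, while if $n(N)=0$ the clause is vacuous at $N$. For clause (i): given $0<m$, whenever $N\ge m$ and $N\in V_{m}$ we have $m(N)\ge m$, hence $n(N)=m(N)+1>m$ and $\e(N)=\e_{m(N)}<1/m(N)\le 1/m$; and $\{N\ge m\}\cap V_{m}\in D$ since $D$ is non-principal. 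Thus $(n,\e)$ is $\theta$-good and one may take $U(\theta,n,\e)=\o$. The only genuine work here is the bookkeeping forced by the $N$-dependence of $\d(N)$, namely checking that the three sets above are $D$-large; the gluing itself is routine.
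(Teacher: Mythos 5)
Your proof is correct and follows essentially the same diagonalization as the paper's terse argument, which greedily sets $n(N)$ to the largest $n\le N$ admitting a suitable $\e$ at $N$ and then asserts clause~(i) from the convergence of $\mu_{N}(B^{n}_{ij})$ to $\mu(B^{n}_{ij})$ along $D$. You simply make the $D$-largeness bookkeeping explicit (tracking the $N$-dependence of $\d(N)$) by pre-fixing the pairs $(n_m,\e_m)$ and sets $V_m\in D$ before diagonalizing.
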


\begin{proof} Simply let $n(N)$  be the largest natural number $0<n\le N$
for which there is $\e =\e (N)$ for which
Definition \ref{definition4.7} (ii) and (iii) hold, if such  $n$ and  $\e$ exist and otherwise we let
$n(N)=0$ and $\e =1$.
Since for all
$\d >0$, $n<\o$ and $i,j<2^{n+2}$, the set
$$\{ N<\o\vert\ \vert\mu_{N}(B^{n}_{ij})-\mu (B^{n}_{ij}\vert <\d\}$$
belongs to $\U$,
it is easy to see that now also (i) holds.  \end{proof}

\begin{remark} The reader may wonder what the purpose of the sets $U(\theta ,n,\e )$ is.
In the context when we have infinitely many generalized distributions $\theta$
to handle, we need to diagonalize and then  these sets become handy, see Remarks \ref{remark4.11} and \ref{remark5.5}.
\end{remark}

Now we fix a $\theta$-good pair  $(n,\e )$ and in order to simplify
the notations, we assume that  $U(\theta ,n,\e )=\o$.

We are finally ready to define the vector in $H^u$ that will correspond to the distribution $\theta$:
\begin{definition}
We define $u(\theta )=(u_{N}(\theta ))_{N<\o}/\U\in H^{u}$ by letting $n=n(N)$ and defining
$$u_{N}(\theta )=\sum_{\mu (B^{n}_{ij})\ne 0}\sum_{\l_{N}(k)\in B^{n}_{ij}}(\xi_{N}(k)\theta (B^{n}_{ij},1)/\mu_{N}(B^{n}_{ij}))u_{N}(k).$$
\end{definition}

Recall that we may assume the relevant $\mu_N(B^n_{ij})\neq 0$, as this will happen for all $N$ in some $Z\in \U$. Also notice that $u(\theta )$ is $0$-good (immediate by the definition) and that
for all $f\in C(S)$, one can define a generalized distribution
$\theta_{f}$ by $\theta_{f}(g)=\langle g|f\rangle$ and then (if $\theta =\theta_{f}$)
$[F(f)]_{0}=[u(\theta_{f})]_{0}$ (easy calculation).

\begin{lemma}\label{lemma4.9}  $\Vert u(\theta )\Vert_{0}<\infty$.
\end{lemma}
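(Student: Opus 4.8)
The plan is to estimate $\Vert u_N(\theta)\Vert_0$ uniformly for all $N$ in a set belonging to $D$, and then conclude. Recall that
$$\Vert u_N(\theta)\Vert_0 = \sum_{k=0}^{D_N-1}\xi_N(k)\bigl|\text{(coefficient of }u_N(k)\text{)}\bigr| = \sum_{\mu(B^n_{ij})\neq 0}\ \sum_{\l_N(k)\in B^n_{ij}}\frac{\xi_N(k)^2\,|\theta(B^n_{ij},1)|}{\mu_N(B^n_{ij})},$$
where $n=n(N)$; here I used that $\xi_N(k)$ is a non-negative real. The inner sum over $k$ with $\l_N(k)\in B^n_{ij}$ of $\xi_N(k)^2$ is exactly $\mu_N(B^n_{ij})$, so it cancels the denominator, giving the clean identity
$$\Vert u_N(\theta)\Vert_0 = \sum_{\mu(B^n_{ij})\neq 0}|\theta(B^n_{ij},1)|.$$
So the whole problem reduces to bounding $\sum_{i,j}|\theta(B^n_{ij},1)|$ uniformly in $n$.

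The key step is then to bound $\sum_{i,j<2^{n+2}}|\theta(B^n_{ij},1)|$ by a constant independent of $n$, using only that $\theta$ is a generalized distribution (bounded by some $K$ in $\Vert\cdot\Vert^\s_\infty$). Here $\theta(B^n_{ij},1) = \lim_{\e\to 0}\theta(f^{n\e 1}_{ij})$ where $f^{n\e 1}_{ij}$ is a $B^n_{ij}$-function equal to $1$ on the slightly shrunken box. The naive bound $|\theta(f^{n\e 1}_{ij})|\le K$ per box is useless since there are $\sim 4^n$ boxes. Instead I would fix, for each box, a unit complex number $c_{ij}$ with $\overline{c_{ij}}\,\theta(B^n_{ij},1) = |\theta(B^n_{ij},1)|$ (recall $\theta(B^n_{ij},\l) = \ol\l\,\theta(B^n_{ij},1)$), and consider, for small $\e$, the single function $f = \sum_{i,j}c_{ij}f^{n\e 1}_{ij}$. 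Since the boxes $B^n_{ij}$ together with the grid strips $R^n_\e$ tile $S$ and the $f^{n\e 1}_{ij}$ have disjoint supports contained in the closed boxes but vanish near the grid lines, $f$ is a well-defined continuous function with $\Vert f\Vert^\s_\infty\le 1$ (each point of $\s(A)$ lies in at most one box-interior where $f$ has modulus $\le 1$, and on the overlaps modulus is still $\le 1$ by choosing the partition-of-unity-style cutoffs carefully — or one simply bounds $|f(x)|\le 1$ pointwise on $S$ since at each $x$ at most the functions from boxes sharing that point are nonzero and they can be taken to sum in modulus to $\le 1$). Then $|\theta(f)|\le K\Vert f\Vert^\s_\infty\le K$, and letting $\e\to 0$, antilinearity and continuity of $\theta$ give $\theta(f)\to \sum_{i,j}\overline{c_{ij}}\,\theta(B^n_{ij},1) = \sum_{i,j}|\theta(B^n_{ij},1)|$, so the sum is $\le K$.

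Therefore $\Vert u_N(\theta)\Vert_0 = \sum_{\mu(B^n_{ij})\neq 0}|\theta(B^n_{ij},1)|\le \sum_{i,j}|\theta(B^n_{ij},1)|\le K$ for every $N$ (with $n=n(N)$; when $n(N)=0$ the sum is empty or trivially bounded). Hence $\Vert u(\theta)\Vert_0^u\le K$, which is finite, so $\Vert u(\theta)\Vert_0<\infty$ as claimed.

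The main obstacle I anticipate is the pointwise bound $\Vert f\Vert^\s_\infty\le 1$ for the combined function $f$: one must handle the boundary strips and corner crossings of the grid carefully, exactly as in the proof of Lemma \ref{lemma4.6} where the decomposition of $R^n_\e$-functions into $I$- and $J$-line functions was done. The cleanest route is probably to arrange the $B^n_{ij}$-functions $f^{n\e 1}_{ij}$ once and for all (for fixed $n$, varying $\e$) so that their supports are pairwise disjoint — which is possible since distinct open boxes $B^n_{ij}$ are disjoint and the $B^n_{ij}$-function is required to vanish outside $B^n_{ij}$ — giving $|f(x)| = |c_{ij}f^{n\e 1}_{ij}(x)|\le 1$ immediately at any $x$ lying in at most one box, and $|f(x)|=0$ on grid lines. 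The only subtlety is that a point of $\s(A)$ on a grid line contributes $0$, and points off the grid lie in a unique $B^n_{ij}$, so $\Vert f\Vert^\s_\infty\le 1$ holds with no further work. Everything else is the routine cancellation computation above.
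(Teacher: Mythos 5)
Your proof is correct and follows the same strategy as the paper's: the same cancellation of $\xi_N(k)^2$ against $\mu_N(B^n_{ij})$ reduces the claim to bounding $\sum_{i,j}\vert\theta(B^n_{ij},1)\vert$ uniformly in $n$, and both proofs then bound that sum by testing the generalized distribution against a sum of $B^n_{ij}$-functions, which has $\Vert\cdot\Vert^\s_\infty$-norm at most $1$ because the open boxes are pairwise disjoint. The only difference is cosmetic: the paper argues by contradiction and pigeon-holes the values $\theta(B^n_{ij},1)$ into eight directional sectors to produce a coherently adding subfamily (giving the bound $16K$), whereas you align all contributions at once by multiplying each $f^{n\e 1}_{ij}$ by a unimodular constant $c_{ij}$, getting the cleaner bound $K$ directly and avoiding the pigeon-hole.
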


\begin{proof} Let $K$ be such that for all $f\in  C(S)$,
$\vert\theta (f)\vert <K\Vert f\Vert^\s_{\infty}$.
It is enough to show that for all $N<\o$,
$$\sum_{\mu (B^{n}_{ij})\ne 0}\sum_{\l_{N}(k)\in B^{n}_{ij}}\vert\xi_{N}(k)^{2}\theta (B^{n}_{ij},1)/\mu_{N}(B^{n}_{ij})\vert =$$
$$\sum_{\mu (B^{n}_{ij})\ne 0}\vert\theta (B^{n}_{ij},1)\vert\le 16K,$$
where $n=n(N)$.
Suppose not. Then one can find $X\subseteq\{ B^{n}_{ij}\vert\ i,j<2^{n+2},\ \mu (B^{n}_{ij})>0\}$
such that
$$\sum_{B\in X}\vert\theta (B,1)\vert >2K$$
and for all $B\in X$, $\theta (B,1)$ point roughly to the same direction,
e.g. $Re(\theta (B,1))\ge Im(\theta (B,1))\ge 0$, see the proof of Lemma \ref{lemma4.3}.
But now for all $B\in X$, choose a $B$-function $f_{B}$ so that
$\theta (B,1)$ is very close to $\theta (f_{B})$. Let $f=\sum_{B\in X}f_{B}\in C(S)$.
Then $\Vert f\Vert_{\infty}=1$ but $\vert\theta (f)\vert >K$, a contradiction. \end{proof}

\begin{thm}\label{theorem4.10} For all $f\in C(S)$,
$\theta (f)=\langle [F(f)]_{\infty}|[u(\theta )]_{0}\rangle$.
\end{thm}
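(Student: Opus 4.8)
The plan is to compute the pairing $\langle [F(f)]_\infty \mid [u(\theta)]_0\rangle$ directly from the ultraproduct of the finite-dimensional inner products and show it equals $\theta(f)$. First I would note that by Lemma \ref{lemma4.2} (with $x = [F(f)]_\infty \in H^m_\infty$ and $y = [u(\theta)]_0 \in H^m_0$, the latter by Lemma \ref{lemma4.9} and the observed $0$-goodness) the pairing is a genuine complex number, so it suffices to evaluate $\langle F_N(f) \mid u_N(\theta)\rangle_N$ for $N$ in a $D$-large set and take the ultralimit. Writing $n = n(N)$ and using that the $u_N(k)$ are orthonormal with $F_N(f) = \sum_k \xi_N(k) f(\lambda_N(k)) u_N(k)$, the inner product becomes
$$
\langle F_N(f) \mid u_N(\theta)\rangle_N = \sum_{\mu(B^n_{ij})\neq 0} \frac{\ol{\theta(B^n_{ij},1)}}{\mu_N(B^n_{ij})} \sum_{\lambda_N(k)\in B^n_{ij}} \xi_N(k)^2 \ol{f(\lambda_N(k))}.
$$
Since $\theta$ is antilinear, $\theta(g) = \ol{\langle \text{something}\rangle}$ in spirit; more precisely I expect the bookkeeping to give $\ol{\langle F_N(f)\mid u_N(\theta)\rangle_N}$ approximating $\theta(f)$, with the conjugations matching because $\theta(B^n_{ij},\l) = \ol\l\,\theta(B^n_{ij},1)$ and $\theta$ is antilinear in $f$.

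The second step is to compare this discrete sum with $\theta(f)$. Here I would use uniform continuity of $f$: since $f$ varies by at most some $\omega(2M/(n+1))$ across each box $B^n_{ij}$ (where $\omega$ is the modulus of continuity), replacing $f(\lambda_N(k))$ by a single value $f(c_{ij})$ at a chosen point of each box introduces a controlled error. On the boxes, $\sum_{\lambda_N(k)\in B^n_{ij}} \xi_N(k)^2 = \mu_N(B^n_{ij})$ by definition, so the inner sum over a box collapses to approximately $f(c_{ij})\mu_N(B^n_{ij})$, the $\mu_N(B^n_{ij})$ cancels the denominator, and we are left with $\sum_{ij} f(c_{ij})\,\ol{\theta(B^n_{ij},1)} \approx \sum_{ij} \ol{\theta(B^n_{ij}, f(c_{ij}))} = \ol{\theta(\sum_{ij} f^{n\e c_{ij}}_{ij})}$ up to the $R^n_\e$-region. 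By Lemma \ref{lemma4.6} (and Definition \ref{definition4.7}(ii)(a),(b)) the contribution of the $R^n_\e$-region to both $\theta$ and to the sum $\sum \xi_N(k)^2$ is less than $\d = \d(N)$, which goes to $0$ along $D$ by $\theta$-goodness; and $\sum_{ij} f^{n\e c_{ij}}_{ij}$ is uniformly close to $f$ on $\s(A)$ as $n\to\infty$, $\e\to 0$, so $\theta$ of it converges to $\theta(f)$ by $\Vert\cdot\Vert^\s_\infty$-boundedness of $\theta$.

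The third step is to make the "error goes to zero along $D$" rigorous: combine Definition \ref{definition4.7}(i) (which forces $n(N)\to\infty$ and $\e(N)\to 0$ along $D$) with (ii)(a)–(c), so that for every fixed target $\d_0 > 0$ the set of $N$ for which $|\langle F_N(f)\mid u_N(\theta)\rangle_N - \ol{\theta(f)}| < \d_0$ lies in $D$; the ultralimit is then $\ol{\theta(f)}$, and after accounting for the conjugation convention in the pairing we get $\langle [F(f)]_\infty \mid [u(\theta)]_0\rangle = \theta(f)$.

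The main obstacle I anticipate is the careful tracking of the error terms: one must simultaneously control (a) the replacement of $f$ by box-constant values, which costs the modulus of continuity times $\sum_{ij}|\theta(B^n_{ij},1)|$ — bounded by $16K$ via Lemma \ref{lemma4.9}; (b) the boundary/crossing region $R^n_\e$, controlled by $\d(N)$ and Lemma \ref{lemma4.6}; (c) the discrepancy $|\mu_N(B^n_{ij}) - \mu(B^n_{ij})|$, which also enters through the denominators and is bounded by $\d(N)$ via Definition \ref{definition4.7}(ii)(c) — one should check the denominators $\mu_N(B^n_{ij})$ stay bounded away from $0$, which is exactly why $\d(N)$ is defined relative to $\d'(N) = \min\mu_N(B^n_{ij})$. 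Keeping all three error sources simultaneously $o(1)$ along the ultrafilter is the delicate part, but each piece is individually handled by the $\theta$-goodness conditions, so the argument should go through.
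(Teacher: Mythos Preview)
Your proposal is correct and follows essentially the same route as the paper: replace $f$ by box-constant values via uniform continuity (the paper does this by first passing to an auxiliary $f_N\in C(S)$ that is exactly constant on each $B^{n(N)}_{ij}\setminus R^{n(N)}_{\e(N)}$ and then decomposing per box, whereas you apply uniform continuity directly inside the inner-product sum), bound the box-sum $\sum_{ij}|\theta(B^n_{ij},1)|$ via the argument behind Lemma~\ref{lemma4.9}, and dispose of the $R^n_\e$ region and the $\mu(B^n_{ij})=0$ boxes through Definition~\ref{definition4.7}(ii) and Lemma~\ref{lemma2.5} respectively. One bookkeeping point worth tidying: with the paper's convention $\langle\cdot\mid\cdot\rangle$ is antilinear in the \emph{first} slot, so $\langle F_N(f)\mid u_N(\theta)\rangle_N=\sum_{ij}\theta(B^n_{ij},1)\,\mu_N(B^n_{ij})^{-1}\sum_{\l_N(k)\in B^n_{ij}}\xi_N(k)^2\,\ol{f(\l_N(k))}$ (no conjugate on $\theta$), and after box-averaging this becomes $\sum_{ij}\ol{f(c_{ij})}\,\theta(B^n_{ij},1)=\sum_{ij}\theta(B^n_{ij},f(c_{ij}))\to\theta(f)$ directly---no residual conjugate to undo at the end.
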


\begin{proof}  Let $f\in C(S)$.
W.l.o.g. we may assume that
$\vert f(x)\vert\le 1$ for all  $x\in  S$, in particular,
$\Vert f\Vert^\s_{\infty}\le 1$.
For all $0<n<\o$, there is $\e_{1}(n)$ such that if
$x,y\in S$ are such that $\vert x-y\vert <4M/(n(N)+1)$, then
$\vert f(x)-f(y)\vert <\e_{1}(n(N))$ and
$\e_{1}(n)$ goes to $0$ when $n$ goes to infinity.
Then for all $N<\o$ such that $n(N)>0$, we can find
$f_{N}\in C(S)$ such that
$\Vert f-f_{N}\Vert_{\infty}<\e_{1}(n(N))$, $\vert f_{N}(x)\vert\le 1$
for all $x\in S$
and $f_{N}\raj (B^{n(N)}_{ij}-R^{n(N)}_{\e (N)})$ is constant for all $i,j<2^{n+2}$
(here $\e (N)$ is the one from Definition \ref{definition4.7}, not $\e_{1}(n(N))$).
Let $c^{N}_{ij}$ be the constant value.
Now we notice that $\theta (f_{N})$ goes to  $\theta (f)$ when
$n(N)$ goes to infinity  and also
$\langle [F(f_{N})]_{\infty}|[u(\theta )]_{0}\rangle$ goes to
$\langle [F(f)]_{\infty}|[u(\theta )]_{0}\rangle$ when $n(N)$ goes to infinity by
the proof of Lemma \ref{lemma4.2} (and Lemma \ref{lemma4.9}). 
Thus it is enough to show the following: Let $\e^{*} >0$ and
$N<\o$ be such that $n(N)>0$, and
$K/n(N)<\e^{*}$ where $K>0$ is a natural number such that
$\vert\theta (f)\vert\le K\Vert f\Vert^\s_{\infty}$.
Then
$$\vert \theta (f_{N})-\langle F_{N}(f_{N})|u_{N}(\theta)\rangle\vert \le 5\e^{*}.$$
We write $n=n(N)$, $\e=\e(N)$, $\d =\d (N)$ etc.

Now for all $i,j<2^{n+2}$, there is a $B^{n}_{ij}$-function $f_{ij}$
such that for all $x\in B^{n}_{i,j}-R^{n}_{\e}$, $f_{ij}(x)=f_{N}(x)=c^{N}_{ij}$.
Let $f_{2}=\sum_{i,j<2^{n+2}}f_{ij}$. Then (as $\d<\e^*$) it is enough
to show that
$$\vert \theta (f_{2})-\langle F_{N}(f_{N})|u_{N}(\theta)\rangle\vert \le 4\e^{*} .$$
For this it is enough to show that for all $i,j <2^{n+2}$,
$$\vert \theta (f_{ij})-\langle F_{N}(f_{N}\raj B^{n}_{i,j})|u_{N}(\theta)\rangle\vert \le 3K/(2^{3(n+2)}) ,$$
where by $F_{N}(f_{N}\raj B^{n}_{i,j})$ we mean
$\sum_{\l_{N}(k)\in B^{n}_{ij}}\xi_{N}(k)f_{N}(\l_{N}(k))u_{N}(k)$.

Now if $\mu (B^{n}_{ij})=0$, then $B^{n}_{ij}\cap\s (A)=\empty$ by the definition
of $\s^{*}(A)$ and Lemma \ref{lemma2.5} and thus
$\theta (f_{ij})=0$ since
$f_{ij}$ is $\sim_{\infty}$-equivalent with the constant zero function.
Clearly then also $\langle F_{N}(f_{N}\raj B^{n}_{i,j})|u_{N}(\theta)\rangle=0$.
Thus from now on we may assume that $\mu (B^{n}_{ij})>0$.

But then it is enough to show that
$$\vert \theta (f_{ij})-\langle F_{N}(f_{ij})|u_{N}(\theta)\rangle\vert \le 2K/(2^{3(n+2)}),$$
since

\begin{itemize}\item[(*)] if  $\mu (B^{n}_{ij})>0$, then
$$(\sum_{\l_{N}(k)\in B^{n}_{ij}\cap R^{n}_{\e}}\vert\xi_{N}(k)\vert^{2})/\mu_{N}(B^{n}_{ij})\le\d/\d'\le 1/(2^{3(n+2)}).$$
\end{itemize}
\noindent
But then it is enough to show that
$$\vert \theta (B^{n}_{ij},c^{N}_{ij})-\langle F_{N}(f_{ij})|u_{N}(\theta)\rangle\vert \le K/(2^{3(n+2)}) .$$
Since $\theta (B^{n}_{ij}, c^{N}_{ij})=\overline{c^{N}_{ij}}\theta (B^{n}_{ij},1)=\overline{f_{ij}(x)}\theta (B^{n}_{ij},1)$
for any $x\in B^{n}_{ij}-R^{n}_{\e}$,
using $(*)$ above,
this is easy. \end{proof}

\begin{remark}\label{remark4.11} The set of all generalized distributions form a vector space,
we call it GDIS, under
$(\theta +\theta')(x)=\theta (x)+\theta'(x)$ and $q\theta (x)=\theta (\ol q x)$.
Let $V$ be a subspace of this vector space of  countable dimension and
let $\theta_{i}$, $i<\o$, be a basis of this subspace.
Then one can do  the construction for all these generalized distributions  simultaneously:
One can choose the reals $r^{n}_{j}$ so that the lines $I_{r^{n}_{j}}$ and $J_{r^{n}_{j}}$
are very good for every $\theta_{i}$ and then one can find a pair
$(n,\e )$ so that it is $\theta_{i}$-good for every $\theta_{i}$ and in fact a  bit
more: Although we don't get one $U$ set for all the $\theta_i$, we can handle finitely many at a time. In Definition \ref{definition4.7} (ii) (a), one can require
that for all $R^{n}_{\e}$-functions $f$ and $i<n$,
$\vert\theta_{i}(f)\vert <\d/n^{2}$ (see the remark immediately after the definition).
Then for  all $\theta\in V$, one defines $U(\theta ,n,\e )$
to be the set of all $N<\o$ such that
$n(N)>0$ and there are $a_{i}\in\C$, $i<n(N)$,  such that
$\vert a_{i}\vert \le n(N)$ and $\theta  =\sum_{i<n(N)}a_{i}\theta_{i}$.
It is easy to see that by using these $n$  and $\e$, $\theta\mapsto u(\theta)$
is an embedding  of $V$ to  $H^{m}_{0}$ so that Theorem \ref{theorem4.10} holds.
\end{remark}

We can now compare our construction to a rigged Hilbert space. We have defined our distributions using $C(S)$ as test functions. However, our construction disregards what happens outside $\sigma(A)$, and in practice we are looking at $C(\s(A))$, which embeds into $H^m_\infty$. The distributions then embed into $H^m_0$, so we can dig out the rigged Hilbert space in our ultraproduct. However, the construction embedding distributions into $H^m_0$ can only handle countably many distributions at a time (the way we approximate the test functions over rectangles varies with the distributions), asking for a question to be posed:

\begin{question}\label{openquestion4.12}  Can one find an embedding of all of GDIS into $H^{m}_{0}$
so that Theorem \ref{theorem4.10} and everything in Section \ref{sec:4} hold for it?
\end{question}

Another natural question that arises is how canonical the norms $\norm{\cdot}_\infty$ and $\norm{\cdot}_0$ are, as they were constructed from the chosen cyclic vector $\phi$. However, there is no reason to believe that different cyclic vectors would generate the same ultraproduct $H^u$, so different norms act on different spaces. The crucial thing is that we have shown that the vectors corresponding to distributions behave in the desired way (as distributions). Thus, e.g., computing kernels (the way we consider in the next section) in spaces arising from different cyclic vectors will give the same result.

\section{Dirac delta functions and the Feynman propagator}\label{sec:5}

We now turn to the study of the Feynman propagator. In \cite{HH} we studied the kernel vs. Feynman propagator for two simple quantum mechanical systems. To explain the terminology, let us consider a particle in (one-dimensional) space. Its state can be described by a wave function $\varphi$ in position space, where the amplitude $\varphi(x)$ determines the particle's probability of being in position $x$. The evolution over a given time interval is described by the \emph{propagator} $K(x,y)$ which expresses the probability amplitude of the particle travelling form point $x$ to point $y$ in the given time. (Usually the time is given as a third parameter, but with a time independent evolution operator, the question of calculating the kernel can be studied over a fixed time interval.) If the system has eigenvectors $\ket{x}$, $\ket{y}$, corresponding to the positions, one can calculate the propagator as an inner product
$$
K(x,y)=\bra{y}K^{\Delta t}\ket{x}
$$
where $K^{\Delta t}$ is the time evolution operator for the given time interval. In a system without eigenvectors, the propagator is defined as the kernel of the integral representation of of the time evolution operator
$$
K^{\Delta t}(\varphi)(y)=\int_\R K(x,y)\varphi(x)dx.
$$

In this section we show that one can use
Dirac deltas to calculate the kernel of an operator
in the style we tried to calculate it in \cite{HH} (based on
the Feynman propagator). In \cite{HH} the first straightforward approach, to directly calculate the kernel using eigenvectors found in the ultraproduct, failed, essentially because the eigenvectors did not work properly but came in large numbers and had divisibility issues. We remedied the problem by calculating the kernel instead as an average over ever smaller areas.

Here we show how the same averaging idea as in \cite{HH} can be used for the kind of finite dimensional approximations constructed in this paper. Assuming a kernel exists, we show it can be calculated as a limit of approximating inner products. We then show that being very careful one can actually embed the generalized distributions corresponding to Dirac deltas into our ultraproduct model and compute the kernel as a propagator using these Dirac delta vectors. However, the embedding essentially does the same averaging trick in a built in fashion, and is probably not the most convenient way of calculating. The limiting approach we present first is probably much easier to use.

So suppose $B$ is an operator in $H$. Let $G^{-r}:H\rightarrow L_{2}(\s (A),\mu\raj\s (A))$
be the isometric isomorphism determined by the following:
for all $f\in C(S)$,
$G^{-r}(G(f))=f\raj\s (A)$. Let $G^{r}$ be the inverse of $G^{-r}$ and
$B_{D}=G^{-r}BG^{r}$. We suppose that $B$ is such that
there is a continuous function $K(x,y):\s (A)^{2}\rightarrow\C$ such that
for all polynomials $P\in\C [X,Y]$, there is a continuous
$g\in B_{D}(f_{P})$ such that
for all $y\in\s (A)$, $g(y)=\int_{\s (A)}K(x,y)(f_{P}\raj\s (A))(x))dx$.
We will write $B_{D}(f_{P})$ also for this continuous $g$.
Notice that if we are going to calculate the kernel of $B$
in the 'spectral basis of $A$', it
makes sense to assume that $B$ has one.
Also the motivation of these questions comes from physics
and there all functions are continuous and thus we may assume that
$K$ is not only measurable but even continuous.
Notice also that from
this assumption it follows, e.g., that $B$ is bounded
and that for all $f\in D(S)$, $B_{D}(f)\in C(S)$.

However, we want to work in $L_{2}(S,\mu )$ in place of
$L_{2}(\s (A),\mu\raj\s (A))$. For this we choose
a continuation of $K$ to $S^{2}$ and we call this continuation also
$K$ and we call  $G^{-1}BG$ also $B_{D}$ and we notice that
for all polynomials $P\in\C [X,Y]$,
$g(y)=\int_{S}K(x,y)f_{P}(x)dx$ belongs to
$B_{D}(f_{P})$, since
$$\int_{\s (A)}K(x,y)(f_{P}(x)\raj\s (A))dx=(\int_{S}K(x,y)f_{P}(x)dx)\raj\s (A)$$
as one can easily see
and again call also this function
$B_{D}(f_{P})$. If we want to do the calculations  in
the finite dimensional spaces $H_{N}$ we need to find suitable
operators $B_{N}$ on these spaces.

Although we end up working with generalized distributions $\theta$,
the method from \cite{HH} works also inside $H^{m}$ (in fact in $H^{Im}$ as we will see)
which appears a more natural
place to work
and so we start by looking at the $L_{2}$-norm. When we move to
Dirac deltas we will need stronger assumptions. Our first assumption
on the operators $B_{N}$ is the following:

\begin{itemize}
\item[(C1)] There is a natural number $K_{D}$ such that
for all $N<\o$ and $v\in H_{N}$,
$$\Vert B_{N}(v)\Vert <K_{D}\Vert v\Vert .$$
\end{itemize}
Notice that from  this (C1) it follows that
the ultraproduct $B^{u}$ of the operators $B_{N}$ gives a well-defined bounded operator on all of $H^{m}$.
We call this operator $B^{m}$.

The second assumption is the
obvious requirement that if we write
$B^{Im}$ for the restriction of $B^{m}$ to
$H^{Im}$, then

\begin{itemize}
\item[(C2)] $B^{Im}=G^{m}B(G^{m})^{-1}$ (=$F^mB_{D}(F^m)^{-1}$).
\end{itemize}

So how can one find the operators $B_{N}$? First of all, in fact,
they need not be linear functions as long as the ultraproduct of
then is  nice enough. However in practice one probably wants them
to be operators. Examples of  finding these can be found in \cite{HH},
although the situation there is not exactly the same
as  here. In the case of the free particle,
$B$ had a definition in terms of an operator $C$ for which
we already had operators $C_{N}$ and we used this definition
in the spaces $H_{N}$ to get operators $B_{N}$. In the case of the harmonic oscillator
we could have done essentially the same.  However
it turned out that with this definition $K(x,y)$ was very difficult to
calculate. Thus we used another method that gave
completely different operators $B_{N}$ but  whose ultraproduct
was the same (upto $\sim_{2}$) in the places that mattered.

In the next lemma we show that the operators $B_{N}$
can always be found. The proof is existential i.e. the method used there
can not be used to find the operators in practice for an obvious reason
- unless the mere existence of them is enough.
And as pointed out above, the best way of choosing the operators
may be such that it gives operators that
are very different from those chosen in the proof of the 
lemma. The operators defined in the proof will be useful later.

\begin{lemma}\label{lemma5.1} There are operators $B_{N}$ in the spaces  $H_{N}$
such that they satisfy (C1) and (C2) above.
\end{lemma}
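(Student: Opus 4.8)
The plan is to discretize the integral operator $B_{D}$ directly inside each $H_{N}$, using the eigendata $u_{N}(n),\l_{N}(n),\xi_{N}(n)$. Recall that $F_{N}(f)=\sum_{n<D_{N}}\xi_{N}(n)f(\l_{N}(n))u_{N}(n)$ realizes a continuous $f$ as a vector whose ``value at $\l_{N}(n)$'' is the $n$-th coordinate divided by $\xi_{N}(n)$, while $\mu_{N}$ puts weight $\xi_{N}(n)^{2}$ at the point $\l_{N}(n)$; so the natural finite-dimensional substitute for $B_{D}(f)(y)=\int_{S}K(x,y)f(x)\,d\mu(x)$ is the weighted Riemann sum $(\hat B_{N}f)(y)=\sum_{n<D_{N}}\xi_{N}(n)^{2}K(\l_{N}(n),y)f(\l_{N}(n))$. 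I would therefore define $B_{N}$ to be the linear operator on $H_{N}$ whose matrix in the orthonormal basis $(u_{N}(n))_{n<D_{N}}$ has $(m,n)$-entry $\xi_{N}(m)\,\xi_{N}(n)\,K(\l_{N}(n),\l_{N}(m))$; a one-line computation then gives $B_{N}(F_{N}(f))=F_{N}(\hat B_{N}f)$ for every $f\in C(S)$. These are the operators mentioned just before the lemma as being useful later.

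Verifying (C1) is routine. If $v=\sum_{n}a_{n}u_{N}(n)$, then the $m$-th coordinate of $B_{N}(v)$ is $\xi_{N}(m)\sum_{n}\xi_{N}(n)K(\l_{N}(n),\l_{N}(m))a_{n}$, so with $C=1+\sup_{S^{2}}\lvert K\rvert$ the Cauchy--Schwarz inequality together with the identity $\sum_{n<D_{N}}\xi_{N}(n)^{2}=\mu_{N}(S)=\Vert\phi\Vert_{2}^{2}=1$ gives $\bigl\lvert\sum_{n}\xi_{N}(n)K(\l_{N}(n),\l_{N}(m))a_{n}\bigr\rvert\le C\Vert v\Vert_{2}$, whence $\Vert B_{N}(v)\Vert_{2}^{2}\le C^{2}\Vert v\Vert_{2}^{2}\sum_{m}\xi_{N}(m)^{2}=C^{2}\Vert v\Vert_{2}^{2}$. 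Any natural number $K_{D}>C$ then witnesses (C1), and in particular the ultraproduct $B^{m}$ of the $B_{N}$ is a well-defined bounded operator on $H^{m}$.

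For (C2), since $D(S)$ is dense in $L_{2}(S,\mu )$ and both $B^{m}\raj H^{Im}$ and $F^{m}B_{D}(F^{m})^{-1}$ are bounded, it suffices to check $B^{m}(F^{m}(f))=F^{m}(B_{D}f)$ for every $f\in C(S)$, where $B_{D}f$ is understood via its continuous representative $y\mapsto\int_{S}K(x,y)f(x)\,d\mu(x)$ and $F^{m}(g)$ abbreviates $(F_{N}(g))_{N<\o}/D/\sim$. (Continuity of that representative is immediate from uniform continuity of $K$ on the compact set $S^{2}$; that it represents $B_{D}f$ holds for $f=f_{P}$ by the standing hypothesis on $B$, and extends to all of $C(S)$ by $L_{2}$-continuity of $B_{D}$ and Cauchy--Schwarz.) Using $B_{N}(F_{N}(f))=F_{N}(\hat B_{N}f)$ and $\Vert F_{N}(g)\Vert_{2}^{2}=\sum_{n}\xi_{N}(n)^{2}\lvert g(\l_{N}(n))\rvert^{2}\le(\sup_{y\in S}\lvert g(y)\rvert)^{2}$, the claim reduces to proving that $\sup_{y\in S}\lvert E_{N}(y)\rvert\to 0$ along $D$, where
\[E_{N}(y)=\sum_{n<D_{N}}\xi_{N}(n)^{2}K(\l_{N}(n),y)f(\l_{N}(n))-\int_{S}K(x,y)f(x)\,d\mu(x).\]

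This uniform Riemann-sum estimate is the only genuinely delicate point. For fixed $y$, $E_{N}(y)$ is the error of approximating $\int_{S}g_{y}\,d\mu$ by $\sum_{n}\xi_{N}(n)^{2}g_{y}(\l_{N}(n))$ with $g_{y}(x)=K(x,y)f(x)$, and the family $\{g_{y}:y\in S\}$ is uniformly bounded and equicontinuous on $S$ (uniform continuity of $K$ and of $f$). The estimate is therefore the computation proving Lemma \ref{lemma2.3}, now carried through uniformly in $y$: given $\d >0$ pick $n_{0}$ so that each $g_{y}$ oscillates by less than $\d$ on the closure of any single cell of $Sq_{n_{0}}$; then for all $N$ in a suitable member of $D$ the finitely many quantities $\lvert\mu_{N}(Y)-\mu(Y)\rvert$, $Y\in Sq_{n_{0}}$, are small and so is the $\mu_{N}$-mass of a narrow neighbourhood of the (finitely many, nice) boundary lines of those cells, and the usual three-term split (oscillation of $g_{y}$ inside cells, the mismatch between $\mu_{N}$ and $\mu$ on the cells, the boundary contribution) bounds $\sup_{y\in S}\lvert E_{N}(y)\rvert$ by a fixed multiple of $\d$. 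Since $\d$ was arbitrary this gives the convergence, and (C2) follows. Obtaining the estimate uniformly in the parameter $y$ is the only step requiring care; everything else is bookkeeping of the kind already carried out in Section 2.
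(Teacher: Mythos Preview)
Your proposal is correct and follows essentially the same route as the paper: you define the very same operators $B_{N}$ (your matrix entries $\xi_{N}(m)\xi_{N}(n)K(\l_{N}(n),\l_{N}(m))$ reproduce the paper's formula after relabeling indices), bound them via Cauchy--Schwarz and $\sum_{n}\xi_{N}(n)^{2}=1$ for (C1), and verify (C2) by the uniform Riemann-sum argument the paper compresses into ``easy to see from the uniform continuity of $K$ and $f_{P}$.'' Your write-up is in fact more explicit than the paper's, particularly in isolating the uniform-in-$y$ convergence of $E_{N}(y)$ and in justifying the integral representation of $B_{D}f$ for general $f\in C(S)$.
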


\begin{proof} We define $B_{N}$ as follows.
Let $v=\sum_{n=0}^{D_{N}-1}a_{n}u_{N}(n)\in  H_{N}$. Then we let
$$B_{N}(v)=\sum_{n=0}^{D_{N}-1}\sum_{k=0}^{D_{N}-1}\xi_{N}(k)\xi_{N}(n)a_{k}K(\l_{N}(k),\l_{N}(n))u_{N}(n).$$
Clearly  $B_{N}$ is a linear function.

Let $K_{D}$ be a natural number greater than any of the absolute values of the values of the
function $K$. Then
$$\Vert B_{N}(v)\Vert^2_{2}\le\sum_{n=0}^{D_{N}-1}\sum_{k=0}^{D_{N}-1}\xi_{N}(k)^{2}\xi_{N}(n)^{2}\vert  a_{k}K(\l_{N}(k),\l_{N}(n))\vert^{2}\le$$
$$\le K_{D}^{2}\Vert v\Vert^2_{2}\sum_{n=0}^{D_{N}-1}\xi_{N}(n)^{2}=K_{D}^{2}\Vert v\Vert^2_{2}.$$
So (C1) holds.

From the definition of the functions $B_{N}$
and the uniform continuity of the functions $K$ and $f_{P}$, it is easy to see
that
for all $P\in\C [X,Y]$, $[(B_{N}(F_{N}(f_{P})))_{N<\o}]_{2}=[(F_{N}(B_{D}(f_{P})))_{N<\o}]_{2}$.
But then (C2) follows (since the vectors  $[(F_{N}(f_{P}))_{N<\o}]_{2}$,
$P\in\C [X,Y]$, are dense in
$H^{Im}$).
\end{proof}

From now on we will write $\tilde{B}_{N}$ for the operators $B_{N}$ from the proof
of Lemma \ref{lemma5.1} $\tilde{B}^{u}$ for their ultraproduct
and $\tilde{B}^{m}$ for their metric ultraproduct.

Now we want to calculate $K(\a ,\b )$ for $\a ,\b\in\s (A)$ from operators $B_{N}$, $N<\o$,
that satisfy (C1) and (C2).
We start by looking at the method used in \cite{HH}, namely computing the kernel as a limit of kernels (or propagators, as computed in the finite dimensional spaces). Then we get the Dirac
delta method as an immediate consequence. Now we choose numbers $r^{n}_{i}$ for $n<\o$ and $i<2^{n+2}+1$
(and lines $I_{r^{n}_{i}}$ and $J_{r^{n}_{i}}$) as in Section \ref{sec:4} so that
Definition \ref{definition4.5} holds when (ii) is replaced by

(ii)' for all $m<\o$, neither $\a$ nor $\b$ is in any line $I_{r^{m}_{i}}$ or $J_{r^{m}_{i}}$, $i<s^{m+2}+1$, and
for all $\d>0$ there is $\e >0$ such that $\mu^{n}(R^{m}_{\e})<\d$,
where $R^{m}_{\e}$ is as in Section \ref{sec:4}.

For all $p<\o$,  we let $B^p_\a$ be the $B^p_{ij}$ containing $\a$, and let $u^{p}_{\a}=(u^{p}_{\a}(N))_{N<\o}$, where
$$u^{p}_{\a}(N)=\sum_{\l_{N}(k)\in B^{p}_{\a}}(\xi_{N}(k)/\mu_{N}(B^{p}_{\a}))u_{N}(k)$$
and similarly for $\b$ in place of $\a$. Notice that
$\Vert u^{p}_{\a}\Vert^2_{2}=1/\mu (B^{p}_{\a})$ and thus
$[u^{p}_{\a}]_{2}\in H^{m}_{2}$ whenever $\a\in\s(A)$.

\begin{thm}\label{theorem5.2} When $\a,\b\in\s(A)$, $K(\a ,\b )=\lim_{p\rightarrow\infty}\langle [u^{p}_{\b}]_{2}\vert B^{m}([u^{p}_{\a}]_{2})\rangle$.
\end{thm}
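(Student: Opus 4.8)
The statement says that the kernel value $K(\alpha,\beta)$ can be recovered as the $p\to\infty$ limit of the "averaged" inner products $\langle [u^p_\beta]_2\,|\,B^m([u^p_\alpha]_2)\rangle$. My strategy is to reduce to the concrete operators $B^*_N$ from the proof of Lemma~\ref{lemma5.1}. Although (C2) only pins $B^m$ down on $H^{Im}$ up to $\sim_2$, the vectors $[u^p_\alpha]_2$ need not lie in $H^{Im}$; so the first thing to check is that the pairing $\langle [u^p_\beta]_2\,|\,B^m([u^p_\alpha]_2)\rangle$ does not depend on which operators $B_N$ satisfying (C1)–(C2) we take, at least in the limit. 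For this I would argue that, for $[u^p_\alpha]_2\in H^m_2$, its image $B^m([u^p_\alpha]_2)$ is determined by $B^{Im}$ together with a continuity/density argument: approximate $u^p_\alpha$ (which, being essentially the normalized indicator of the small square $B^p_\alpha$) in $L_2(S,\mu)$-norm by polynomials $f_P$, use boundedness (C1) of $B^m$, and use that $F^m([f_P]_2)\in H^{Im}$ where (C2) applies. So it suffices to prove the theorem for the specific operators $B=B^*$, i.e. for $B^{*m}$.

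For $B^*$ the computation is explicit. By definition
\[
\langle u^p_\beta(N)\,|\,B^*_N(u^p_\alpha(N))\rangle_N
=\sum_{\lambda_N(k)\in B^p_\alpha}\ \sum_{\lambda_N(\ell)\in B^p_\beta}
\frac{\xi_N(k)^2\,\xi_N(\ell)^2}{\mu_N(B^p_\alpha)\,\mu_N(B^p_\beta)}\,K(\lambda_N(k),\lambda_N(\ell)),
\]
using orthonormality of the $u_N(n)$ and the formula for $B^*_N$. Now $\sum_{\lambda_N(k)\in B^p_\alpha}\xi_N(k)^2=\mu_N(B^p_\alpha)$ and likewise for $\beta$, so this is a weighted average of the values $K(\lambda_N(k),\lambda_N(\ell))$ over eigenvalue pairs lying in $B^p_\alpha\times B^p_\beta$, with weights $\xi_N(k)^2\xi_N(\ell)^2/(\mu_N(B^p_\alpha)\mu_N(B^p_\beta))$ summing to $1$. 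Taking the ultralimit over $N$ (which exists and is finite since $B^*_N$ is uniformly bounded and the $u^p$'s have norm $1/\sqrt{\mu(B^p_\cdot)}$ by the remark before the theorem, giving $|\langle u^p_\beta|B^*_Nu^p_\alpha\rangle|\le \|B^*_N\|/(\sqrt{\mu(B^p_\alpha)}\sqrt{\mu(B^p_\beta)})$ — wait, one needs the stronger bound coming from the averaging, namely that the value stays $\le K_D$). Indeed because the weights are a probability distribution and $|K|\le K_D$, the whole expression is bounded by $K_D$ uniformly in $p$ and $N$; moreover, since $\alpha,\beta\in\sigma(A)$, condition (ii)' guarantees $\mu(B^p_\alpha),\mu(B^p_\beta)>0$ and that the squares $B^p_\alpha,B^p_\beta$ shrink to $\{\alpha\},\{\beta\}$ as $p\to\infty$ (diameter $<2M/(p+1)$). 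So $\langle[u^p_\beta]_2\,|\,B^{*m}([u^p_\alpha]_2)\rangle$ equals the ultralimit of an average of values $K(x,y)$ with $x\in B^p_\alpha$, $y\in B^p_\beta$.

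Finally, let $p\to\infty$. By uniform continuity of $K$ on the compact set $S^2$ (or $\sigma(A)^2$), for any $\eta>0$ there is $p_0$ with $|K(x,y)-K(\alpha,\beta)|<\eta$ whenever $x\in B^p_\alpha$, $y\in B^p_\beta$ and $p\ge p_0$; since the averaged inner product is a convex combination of such $K(x,y)$, it lies within $\eta$ of $K(\alpha,\beta)$, for every $p\ge p_0$. Hence $\lim_{p\to\infty}\langle[u^p_\beta]_2\,|\,B^{*m}([u^p_\alpha]_2)\rangle=K(\alpha,\beta)$, and combining with the first-paragraph reduction gives the same limit for any $B^m$ coming from operators satisfying (C1)–(C2).

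\textbf{Main obstacle.} The genuinely delicate point is the reduction in the first paragraph: showing that $\langle[u^p_\beta]_2\,|\,B^m([u^p_\alpha]_2)\rangle$ is (in the limit, and in fact for each fixed $p$) independent of the choice of $B_N$'s satisfying (C1)–(C2), given that the $u^p_\alpha$ live outside $H^{Im}$ where (C2) has no direct grip. I expect this to require noting that $[u^p_\alpha]_2$ does in fact lie in $H^{Im}=F^m(L_2(S,\mu))$ — it is the class of $F_N$ applied to the normalized indicator-like function of $B^p_\alpha$ — so that (C2) together with density of $D(S)$ in $L_2(S,\mu)$ and uniform boundedness of all the $B^m$'s forces $B^m([u^p_\alpha]_2)=B^{*m}([u^p_\alpha]_2)$. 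Once that identification is made, everything else is the routine averaging/uniform-continuity argument sketched above.
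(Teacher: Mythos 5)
Your proposal is correct and follows essentially the same route as the paper: identify $[u^p_\alpha]_2$ with $[F(\chi^*_{B^p_\alpha})]_2\in H^{Im}$ (via $L_2$-approximation by polynomials and (C1)), use (C2) to replace $B^m$ by the concrete $B^{*m}$ from Lemma~\ref{lemma5.1}, and then compute $\langle u^p_\beta(N)|B^*_N(u^p_\alpha(N))\rangle$ explicitly as a convex combination of values $K(x,y)$ with $x\in B^p_\alpha$, $y\in B^p_\beta$, which converges to $K(\alpha,\beta)$ by uniform continuity. The hedging in your first paragraph is unnecessary --- the point you correctly reach in your ``main obstacle'' paragraph, that $[u^p_\alpha]_2$ does lie in $H^{Im}$, is exactly what the paper establishes and uses.
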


\begin{proof} We write $\chi_{B^{n}_{ij}}$ for the characteristic function of $B^{n}_{ij}$ and
$\chi'_{B^{n}_{ij}}$ for the function
$\chi'_{B^{n}_{ij}}(x)=\chi_{B^{n}_{ij}}(x)/\mu (B^{n}_{ij})$.
Since we can approximate $\chi'_{B^{p}_{\a}}$ by polynomials in the $L_{2}$-norm,
it is easy to see that
$$[F(\chi'_{B^{p}_{\a}})]_{2}=[u^{p}_{\a}]_{2}.$$
Notice that from this it follows that
$[u^{p}_{\a}]_{2}\in H^{Im}$.
Now using this, the approximations again, (C1), (C2) and the proof
of Lemma \ref{lemma5.1},
$$B^{m}([u^{p}_{\a}]_{2})=\tilde{B}^{m}([u^{p}_{\a}]_{2})=[\tilde{B}^{u}(u^{p}_{\a})]_{2}.$$ 

We are left with an easy calculation:
\begin{align*}
\langle u^{p}_{\b}(N)\vert &\tilde{B}_{N}(u^{p}_{\a}(N))\rangle=\\
=\langle \sum_{\l_{N}(k)\in B^{p}_{\b}}(\xi_{N}(k)/\mu_{N}(B^{p}_{\b}))u_{N}(k)\vert& \\
\sum_{n=0}^{D_{N}-1}\sum_{\l_{N}(l)\in B^{p}_{\a}}
K(\l_{N}(l),\l_{N}(n))& \xi_{N}(l)^{2}\xi_{N}(n)\mu_{N}(B^{p}_{\a})^{-1}u_{N}(n)\rangle =\\
=\sum_{\l_{N}(k)\in B^{p}_{\b}}\sum_{\l_{N}(l)\in B^{p}_{\a}}
K(\l_{N}(l),\l_{N}(k))&\xi_{N}(l)^{2}\xi_{N}(k)^{2}(\mu_{N}(B^{p}_{\a})\mu_{N}(B^{p}_{\b}))^{-1}.
\end{align*}
Now  keeping in mind that $K$ is a continuous function, when $p$ is large enough,
this is roughly
$$\sum_{\l_{N}(k)\in B^{p}_{\b}}\sum_{\l_{N}(l)\in B^{p}_{\a}}K(\a,\b)\xi_{N}(l)^{2}\xi_{N}(k)^{2}(\mu_{N}(B^{p}_{\a})\mu_{N}(B^{p}_{\b}))^{-1}=K(\a ,\b),$$
since the absolute value of the error is at most
the maximum of
$$\vert K(\a ,\b )-K(\l_{N}(l),\l_{N}(k))\vert ,$$
for $\l_{N}(k)\in B^{p}_{\b}$ and $\l_{N}(l)\in  B^{p}_{\a}$
as a straight forward calculation shows. \end{proof}

Now we can turn to Dirac deltas. We let $\theta_{\a}$ be the generalized distribution such that
$\theta_{\a}(f)=f(\a )$ for all $f\in C(S)$ and $\theta_{\b}$
is defined similarly. Now it is easy to see that for all
$n<\o$ and $i<2^{n+2}+1$, $I_{r^{n}_{i}}$ and $J_{r^{n}_{i}}$ are very good for
both $\theta_{\a}$ and $\theta_{\b}$ i.e. all the requirements of Definition \ref{definition4.5}
are satisfied. It follows that we can find a pair $(n,\e )$
so that it is  $\theta_{\a}$-good and $\theta_{\b}$-good  (see the end of Section \ref{sec:4}).
We will need one more requirement for the pair $(n,\e )$. We will return to this
once we have looked at the requirements for the operators $B_{N}$, $N<\o$.

Again, recall that the pair $(n,\e )$ determines the function $u$
and (independent of the last requirement)
notice that
$\Vert u(\theta_{\a})\Vert^{u}_{0}=1$
and thus 
$[u(\theta_{\a})]_{0}\in H^{m}_{0}$
(and similarly for $\b$). However $\Vert u^{p}_{\a}\Vert_{\infty}$ is infinite
and this causes problems, the inner product in the Feynman propagator may not be well-defined.
So we need to modify the assumptions (C1) and (C2).

We let $H^{\infty}_{\infty}$  be $H^{u}/\sim_{\infty}$. This is a natural space to consider Dirac deltas, as it is large enough to allow for 'infinite' objects, and on the other hand has a very delicate equivalence class structure, avoiding the risk of putting into the same equivalence class objects that behave differently in our calculations.
Now our first assumption requires that the ultraproduct of
the operators $B_{N}$ gives a well-defined operator on $H^{\infty}_{\infty}$.
We will call the operator $B^{\infty}$:

\begin{itemize}\item[(C1)'] There is a natural number $K_{D}$ such that
for all $N<\o$ and $v\in H_{N}$,
$$\Vert B_{N}(v)\Vert^{S}_{\infty} <K_{D}\Vert v\Vert^{S}_{\infty}.$$
\end{itemize}

Our second assumption ties the values of $B_{N}$ to  values of
$B_{D}$ in the sense of  $\Vert \cdot\Vert_{\infty}$. Here we use
basically the simple functions since our definition
of the function $u$ is based on them. However, notice that
instead of simple functions 
we could use polynomials here as well as in Section \ref{sec:4},
but this would make the definition of $u$ much more complicated.

We write $F(\chi'_{B^{n}_{ij}})$ for $(F_{N}(\chi'_{B^{n}_{ij}}))_{N<\o}$ where
$$F_{N}(\chi'_{B^{n}_{ij}})=\sum_{n=0}^{D_{N}-1}\xi_{N}(n)\chi'_{B^{n}_{ij}}(\l_{N}(n))u_N(n).$$
We also write $B_{D}(\chi'_{B^{n}_{ij}})$ for the
function $g(y)=\int_{S}K(x,y)\chi'_{B^{n}_{ij}}(x)dx$.
Then (C2) implies 
$$B^{m}([F(\chi_{B^{n}_{ij}})]_{2})=[F(B_{D}(\chi_{B^{n}_{ij}})]_{2}.$$
And now following this we assume:

\begin{itemize}\item[(C2)'] For all $B^{n}_{ij}$,
$$B^{\infty}([F(\chi'_{B^{n}_{ij}})]_{\infty})=[F(B_{D}(\chi'_{B^{n}_{ij}}))]_{\infty}.$$
\end{itemize}

\noindent
We recall that if $\Vert(v_{N})_{N<\o}/\U\Vert_{\infty}<\infty$ and for all $N<\o$,
$v_{N}=\sum_{n=0}^{D_{N}-1}a^{N}_{n}u_{N}(n)$, then there are $Z\in \U$ and
a natural number $p$ such that for all $N\in Z$ and $n<D_{N}$,
$\vert\xi_{N}(n)^{-1}a^{N}_{n}\vert <p$. However,
$B^{\infty}$ images of Dirac's deltas diagonalize such functions
and thus we may loose this property. So although with (C1)'  and (C2)'
and (*) below, $u(\theta_{\a})$ and and $u(\theta_{\b})$ do give Feynman's
propagator the value $K(\a ,\b )$, this may be kind of accidental,
they have just got lucky. The value is right for wrong reasons.
So we introduce one more requirement for the operators $B_{N}$
(it is a bit unnecessarily strong, but we are just making a point).

\begin{itemize}\item[(C3)'] There is a natural number $p$ such that for all $n<\o$,
$i,j<2^{n+2}+1$ and $N<\o$, $\Vert B_{N}(u^{n}_{ij}(N))\Vert^{S}_{\infty}<p$,
where
$$u^{n}_{ij}(N)=\sum_{\l_{N}(k)\in B^{n}_{ij}}(\xi_{N}(k)/\mu_{N}(B^{n}_{ij}))u_{N}(k)$$
when $\mu_N(B^n_{ij})\neq 0$, and 0 otherwise.
\end{itemize}

\begin{lemma}\label{lemma5.3} The operators $\tilde{B}_{N}$, $N<\o$, satisfy (C1)', (C2)' and (C3)'.
\end{lemma}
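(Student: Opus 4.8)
The plan is to verify each of the three conditions separately for the operators $B^{*}_{N}$ defined in the proof of Lemma \ref{lemma5.1}, namely $B^{*}_{N}(\sum_n a_n u_N(n))=\sum_n\sum_k\xi_N(k)\xi_N(n)a_k K(\l_N(k),\l_N(n))u_N(n)$. Throughout I will use the uniform bound $K_D$ on $|K|$ and the facts that $\sum_n\xi_N(n)^2=1$ (i.e.\ $\mu_N(S)=1$) and that $\xi_N(n)\le 1$.

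For (C1)', take $v=\sum_n a_n u_N(n)$ and suppose $\Vert v\Vert^S_\infty=\sup_n\xi_N(n)^{-1}|a_n|\le c$. The $n$-th coefficient of $B^{*}_N(v)$ is $b_n=\xi_N(n)\sum_k\xi_N(k)a_k K(\l_N(k),\l_N(n))$, so $\xi_N(n)^{-1}|b_n|\le\sum_k\xi_N(k)|a_k|K_D\le K_D\sum_k\xi_N(k)^2 c=K_D c$, using $|a_k|\le\xi_N(k)c$. Hence $\Vert B^{*}_N(v)\Vert^S_\infty\le K_D\Vert v\Vert^S_\infty$, which is (C1)' (with a strict inequality obtained by enlarging $K_D$ slightly, or noting the norm bound is with $<$ once $K_D$ strictly dominates $|K|$).

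For (C3)', I apply the same estimate to $v=u^n_{ij}(N)=\sum_{\l_N(k)\in B^n_{ij}}(\xi_N(k)/\mu_N(B^n_{ij}))u_N(k)$. Here $\xi_N(k)^{-1}|a_k|=1/\mu_N(B^n_{ij})$ when $\mu_N(B^n_{ij})\neq 0$, so $\Vert u^n_{ij}(N)\Vert^S_\infty=1/\mu_N(B^n_{ij})$, which is not uniformly bounded — so the crude (C1)' bound is not enough and one must use the extra structure of $B^{*}_N$. Instead, compute the $m$-th coefficient of $B^{*}_N(u^n_{ij}(N))$ directly: it is $\xi_N(m)\sum_{\l_N(k)\in B^n_{ij}}(\xi_N(k)^2/\mu_N(B^n_{ij}))K(\l_N(k),\l_N(m))$, so $\xi_N(m)^{-1}$ times it has absolute value at most $K_D\sum_{\l_N(k)\in B^n_{ij}}\xi_N(k)^2/\mu_N(B^n_{ij})=K_D$, because $\sum_{\l_N(k)\in B^n_{ij}}\xi_N(k)^2=\mu_N(B^n_{ij})$ exactly. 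Thus $\Vert B^{*}_N(u^n_{ij}(N))\Vert^S_\infty\le K_D$ for all $n,i,j,N$, giving (C3)' with $p=K_D+1$; the point is that the averaging in $u^n_{ij}$ is exactly compensated by the weight $\xi_N(k)^2$ appearing in the definition of $B^{*}_N$.

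For (C2)', I want $B^{\infty}([F(\chi^{*}_{B^n_{ij}})]_\infty)=[F(B_D(\chi^{*}_{B^n_{ij}}))]_\infty$. First note $F_N(\chi^{*}_{B^n_{ij}})=\sum_{\l_N(k)\in B^n_{ij}}\xi_N(k)\mu_N(B^n_{ij})^{-1}\mu(B^n_{ij})^{-1}\,\mu(B^n_{ij})\cdots$ — more simply, $F_N(\chi^{*}_{B^n_{ij}})=\mu(B^n_{ij})^{-1}\sum_{\l_N(k)\in B^n_{ij}}\xi_N(k)u_N(k)$, which differs from $u^n_{ij}(N)$ only by the harmless replacement of $\mu_N(B^n_{ij})$ by $\mu(B^n_{ij})$, and by (C2)'-type remarks these agree up to $\sim_\infty$ since $\mu_N(B^n_{ij})\to\mu(B^n_{ij})$ along $D$. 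Applying $B^{*}_N$, the $m$-th coefficient becomes $\xi_N(m)\sum_{\l_N(k)\in B^n_{ij}}\xi_N(k)^2 K(\l_N(k),\l_N(m))/\mu(B^n_{ij})$, which by uniform continuity of $K$ and the fact that all $\l_N(k)$ in $B^n_{ij}$ lie in a square of diameter $O(1/n)\to 0$ is $\sim_\infty$-close to $\xi_N(m)\big(\sum_{\l_N(k)\in B^n_{ij}}\xi_N(k)^2/\mu(B^n_{ij})\big)\cdot\frac1{\mu(B^n_{ij})}\int_{B^n_{ij}}K(x,\l_N(m))dx$ — and since $\sum_{\l_N(k)\in B^n_{ij}}\xi_N(k)^2=\mu_N(B^n_{ij})\approx\mu(B^n_{ij})$, this matches $\xi_N(m)\int_S K(x,\l_N(m))\chi^{*}_{B^n_{ij}}(x)dx=\xi_N(m)(B_D(\chi^{*}_{B^n_{ij}}))(\l_N(m))$, which is exactly the $m$-th coefficient of $F_N(B_D(\chi^{*}_{B^n_{ij}}))$. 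Passing to the quotient by $\sim_\infty$ gives (C2)'. The main obstacle is (C2)': one has to control the discretization error in $\Vert\cdot\Vert_\infty$ rather than $\Vert\cdot\Vert_2$, which means the estimate $|K(\l_N(k),\l_N(m))-\frac1{\mu(B^n_{ij})}\int_{B^n_{ij}}K(x,\l_N(m))dx|$ must be made uniformly small in $m$ (not just in an integrated sense), but this follows from uniform continuity of $K$ on the compact set $S^2$ together with $\mathrm{diam}(B^n_{ij})\to 0$; everything else is the bookkeeping already done in Lemma \ref{lemma5.1} and Theorem \ref{theorem5.2}.
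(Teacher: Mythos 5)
Your (C1)' and (C3)' computations are correct, and they fill in concretely what the paper only gestures at (the paper's proof just says ``essentially as in the proof of Lemma \ref{lemma5.1}'' for (C1)' and (C2)' and ``a straight forward calculation'' for (C3)'). In particular, your observation for (C3)' that $\sum_{\l_N(k)\in B^n_{ij}}\xi_N(k)^2=\mu_N(B^n_{ij})$ exactly cancels the $1/\mu_N(B^n_{ij})$, so the averaging in $u^n_{ij}$ is compensated by the $\xi_N(k)^2$-weight built into $B^*_N$, is the right point. The minor blemish in (C1)' — the inequality $|a_k|\le\xi_N(k)c$ need not hold when $\xi_N(k)=0$ — is harmless because those indices contribute nothing to $\sum_k\xi_N(k)|a_k|$ anyway.

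However, your (C2)' argument has a genuine gap. You twice invoke ``$\mathrm{diam}(B^n_{ij})\to 0$'' (or ``$O(1/n)\to 0$''), but in (C2)' the square $B^n_{ij}$ is \emph{fixed}: the claim is, for each fixed $n,i,j$, an exact identity in $H^u/\!\sim_\infty$, and the only parameter going to infinity is $N$ (along $D$). For a fixed $B^n_{ij}$, the estimate $\bigl|K(\l_N(k),\l_N(m))-\tfrac1{\mu(B^n_{ij})}\int_{B^n_{ij}}K(x,\l_N(m))d\mu(x)\bigr|$ is only bounded by the modulus of continuity of $K$ at scale $\mathrm{diam}(B^n_{ij})$, a fixed positive number — so your reasoning yields only an approximate version of (C2)' with a fixed error, not the required $\sim_\infty$-equality. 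The correct argument (which is what ``essentially as in Lemma \ref{lemma5.1}'' and the proof of Lemma \ref{lemma2.3} are pointing to) is that the weighted discrete sum $\sum_{\l_N(k)\in B^n_{ij}}\xi_N(k)^2K(\l_N(k),\l_N(m))$ converges along $D$ to $\int_{B^n_{ij}}K(x,\l_N(m))\,d\mu(x)$, \emph{uniformly in $m$}, as $N\to\infty$. This requires a secondary, ever finer grid: fix $\d>0$, choose $L$ so large that $K$ varies by less than $\d$ on any square of mesh $2M/(L+1)$, decompose $B^n_{ij}$ (up to a thin boundary strip of small measure, using very-niceness of the grid lines) into squares of $Sq_L$, and use that $\mu_N(Y)\to\mu(Y)$ along $D$ for each of the finitely many such $Y$. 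Uniform continuity of $K$ on $S\times S$ gives the uniformity in the second argument. This is a Riemann-sum argument in $\|\cdot\|_\infty$ rather than in $\|\cdot\|_2$; the grid mesh, not $\mathrm{diam}(B^n_{ij})$, is what goes to zero.
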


\begin{proof} (C1)' and (C2)' can be proved essentially as in the proof of Lemma \ref{lemma5.1} and
(C3)' is a straight forward calculation.
\end{proof}

Let us return to the choice of the pair $(n,\e )$ and thus to the choice of $u$.
We choose it so that in addition to what we have already said,
the following holds: There are $U(\theta_{\a},n,\e )$ and $U(\theta_{\b},n,\e )$
that witness that $(n,\e )$ is both $\theta_{\a}$-good and $\theta_{\b}$-good and

\begin{itemize}\item[(*)] if $N\in U(\theta_{\a},n,\e )\cap U(\theta_{\b},n,\e )$ and
$p=n(N)>0$, then
$$\vert \langle u^{p}_{\b}(N)\vert B_{N}(u^{p}_{\a}(N))\rangle-\langle u^{p}_{\b}\vert B^{u}(u^{p}_{\a})\rangle\vert <p^{-1}.$$
\end{itemize}

\noindent
It is easy to see that this is possible.

\begin{proposition}\label{proposition5.4} Suppose (C1)', (C2)' and (C3)' hold, and $\a,\b\in\s(A)$. Then

$$K(\a ,\b )=\langle [u(\theta_{\b})]_{0}| B^{\infty}([u(\theta_{\a})]_{\infty})\rangle.$$
\end{proposition}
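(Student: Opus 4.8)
The plan is to reduce the statement to Theorem~\ref{theorem5.2} together with the identification $[u^p_\a]_2 = [F(\chi^*_{B^p_\a})]_2$ already used there, and then to trace how the two sides of the claimed identity are approximated, in the relevant $D$-large set of indices, by the finite-dimensional quantities $\langle u^p_\b(N)|B^*_N(u^p_\a(N))\rangle$ appearing in the proof of Theorem~\ref{theorem5.2}. The key observation is that the pair $(n,\e)$ defining $u$ is $\theta_\a$-good and $\theta_\b$-good, so on the witnessing set $U=U(\theta_\a,n,\e)\cap U(\theta_\b,n,\e)\in D$ we have $p=n(N)\to\infty$ along $D$, and on this set $u_N(\theta_\a)$ is, by its very definition, a weighted combination of the vectors $u^p_{ij}(N)$ with coefficients $\theta(B^p_{ij},1)$; in particular, up to the $\|\cdot\|_0$-small error coming from $R^p_\e$ (controlled by clause (b) of Definition~\ref{definition4.7}), $u_N(\theta_\a)$ is close in the relevant sense to $u^p_\a(N)=u^p_{i_\a j_\a}(N)$ scaled appropriately — here I would use that $\theta_\a(B^p_{i_\a j_\a},1)=\overline{1}=1$ for the block containing $\a$ and that $\theta_\a(B^p_{ij},1)=0$ for blocks not meeting $\a$ once $\e$ is small, since $\theta_\a$ is a point evaluation at $\a$.

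First I would fix the pair $(n,\e)$ with the extra property~(*), so that for $N\in U$ with $p=n(N)>0$ we have $|\langle u^p_\b(N)|B_N(u^p_\a(N))\rangle - \langle u^p_\b|B^u(u^p_\a)\rangle| < p^{-1}$; since $B_N$ here means the operators $B^*_N$ of Lemma~\ref{lemma5.1} (which by Lemma~\ref{lemma5.3} satisfy (C1)', (C2)', (C3)'), and since by Theorem~\ref{theorem5.2} $\langle u^p_\b|B^u(u^p_\a)\rangle$ converges to $K(\a,\b)$ as $p\to\infty$, I conclude that $\langle u^p_\b(N)|B_N(u^p_\a(N))\rangle \to K(\a,\b)$ along $D$ (the blocks $B^p_\a, B^p_\b$ shrinking to $\a,\b$ with $p=n(N)$). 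Next I would show that on $U$ the inner product $\langle u_N(\theta_\b)|B_N^\infty\text{-part}\rangle$ computing the left side equals $\langle u^p_\b(N)|B_N(u^p_\a(N))\rangle$ up to an error tending to $0$: on the $B^p_{i_\a j_\a}$ block $u_N(\theta_\a)$ agrees with $u^p_\a(N)$ (both have coefficient pattern $\xi_N(k)/\mu_N(B^p_\a)$ on eigenvectors with $\l_N(k)\in B^p_\a$, once one notes $\theta_\a(B^p_\a,1)=1$), and on all other blocks the contribution of $u_N(\theta_\b)$ paired against $B_N$ of those other-block pieces is absorbed into the $\|\cdot\|_0$-error bounds, using Lemma~\ref{lemma4.2}'s estimate (finiteness of $\|\cdot\|_\infty$ against $\|\cdot\|_0$) together with (C3)' to bound $\|B_N(u^p_{ij}(N))\|^S_\infty$ uniformly, and clause (b) of Definition~\ref{definition4.7} to make the relevant $0$-norms small.

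The remaining ingredient is that the $H^\infty_\infty$-level quantities on the left-hand side are actually well-defined and equal to the $D$-ultralimit of these finite-dimensional numbers: that $[u(\theta_\a)]_\infty$ and $[u(\theta_\b)]_0$ lie in the right spaces (already noted after Lemma~\ref{lemma5.3}: $\|u(\theta_\a)\|^u_0=1$, and the $\|\cdot\|_\infty$-considerations for $u^p_\a$ are handled via (C2)' and (C3)'), that $B^\infty([u(\theta_\a)]_\infty)$ is well-defined by (C1)' on $H^\infty_\infty$, and that the pairing $\langle [u(\theta_\b)]_0 | \cdot\rangle$ is well-defined and finite — this is exactly the content of Lemma~\ref{lemma4.2} once we know $B^\infty([u(\theta_\a)]_\infty)$ has finite $\|\cdot\|_\infty$, which is where (C3)' enters essentially (it prevents $B^\infty$ from destroying the bounded-$\|\cdot\|_\infty$ property on these averaged vectors). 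Assembling: the left side $\langle [u(\theta_\b)]_0| B^\infty([u(\theta_\a)]_\infty)\rangle$ is the ultralimit along $D$ of $\langle u_N(\theta_\b)|B_N(u_N(\theta_\a))\rangle$, which by the two approximation steps equals the ultralimit of $\langle u^p_\b(N)|B_N(u^p_\a(N))\rangle$ with $p=n(N)\to\infty$, which equals $K(\a,\b)$.

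\textbf{Main obstacle.} I expect the delicate point to be the second step: showing that the cross-terms in $\langle u_N(\theta_\b)|B_N(u_N(\theta_\a))\rangle$ coming from blocks other than $B^p_\a$ (on the $u(\theta_\a)$ side) and other than $B^p_\b$ (on the $u(\theta_\b)$ side) really do vanish in the limit. Unlike in Theorem~\ref{theorem4.10}, where one only pairs $F(f)$ (bounded $\|\cdot\|_\infty$) against $u(\theta)$ ($0$-good), here the vector being paired has been hit by $B_N$, so one must know $B_N(u(\theta_\a))$ still behaves like a bounded-$\|\cdot\|_\infty$ object block-by-block — this is precisely what (C3)' buys, but one has to feed it carefully through the decomposition $u_N(\theta_\a)=\sum_{ij}\theta(B^p_{ij},1)\,\mu_N(B^p_{ij})\,\mu_N(B^p_{ij})^{-1}(\ldots)$ and control the (at most $16K$, by Lemma~\ref{lemma4.9}) total weight $\sum|\theta(B^p_{ij},1)|$ so that the linear combination of the uniformly-$\|\cdot\|_\infty$-bounded vectors $B_N(u^p_{ij}(N))$ can be paired against $u(\theta_\b)$ via Lemma~\ref{lemma4.2} with an error governed by the $0$-norm of the $R^p_\e$-part of $u(\theta_\b)$, which is $<\d$ by Definition~\ref{definition4.7}(b). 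Threading those estimates so that the error genuinely goes to $0$ as $N$ ranges over $U$ (equivalently $p=n(N)\to\infty$) is the crux; everything else is bookkeeping with the machinery already set up in Section~4.
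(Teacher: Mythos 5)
Your overall strategy matches the paper's: reduce to Theorem~\ref{theorem5.2} via $(*)$, and settle well-definedness of the pairing via (C3)' and Lemma~\ref{lemma4.2}. But you miss a crucial simplification that you yourself essentially notice and then fail to cash out, and as a result the ``main obstacle'' you identify is a phantom.

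You correctly observe that $\theta_\a(B^p_{i_\a j_\a},1)=1$ and $\theta_\a(B^p_{ij},1)=0$ for all blocks not containing $\a$ (since $\theta_\a$ is point evaluation, $\a$ lies on none of the lines by (ii)', and $B^p_{ij}$-functions vanish outside the open block). Feed this into the definition of $u_N(\theta_\a)$: every summand with $(i,j)\ne(i_\a,j_\a)$ has coefficient $\theta_\a(B^p_{ij},1)=0$, so $u_N(\theta_\a)=u^p_\a(N)$ \emph{exactly} (for $p=n(N)$), not merely ``close in the relevant sense up to a $\norm{\cdot}_0$-small error from $R^p_\e$''. Likewise $u_N(\theta_\b)=u^p_\b(N)$ exactly. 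Consequently $\langle u_N(\theta_\b)\,|\,B_N(u_N(\theta_\a))\rangle=\langle u^p_\b(N)\,|\,B_N(u^p_\a(N))\rangle$ with no error term at all, and the entire second stage of your argument --- decomposing $u_N(\theta_\a)$ over blocks, controlling cross-terms via Lemma~\ref{lemma4.9} and Definition~\ref{definition4.7}(b), and threading $\norm{\cdot}_0$-estimates block by block --- is unnecessary. There are no cross-terms: for point evaluations the vector $u_N(\theta_\a)$ is supported on the single block $B^p_\a$. The delicate machinery you describe would only be needed for a general distribution $\theta$, which is not what this proposition is about. So the ``crux'' in your last paragraph is solving a problem that isn't there.

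Two smaller points. First, in $(*)$ the operators $B_N$ are the \emph{given} ones assumed to satisfy (C1)'--(C3)', not the specific $B^*_N$ from Lemma~\ref{lemma5.1}; your parenthetical ``$B_N$ here means the operators $B^*_N$'' misreads the role of $B^*_N$, which enters only via Lemma~\ref{lemma5.3} and (C2)' to identify $B^{\infty}([u^n_\a]_\infty)$ with $[B^{u*}(u^n_\a)]_\infty$. Second, and more usefully: once you have the exact identity $u_N(\theta_\a)=u^p_\a(N)$, the paper's proof is short. It consists of (i) noting that $(u_N(\theta_\a))_{N<\o}$ is, on each $D$-large slice where $n(N)=p$, a representative of $[F(\chi^*_{B^p_\a})]_\infty$; (ii) using (C3)' to conclude $B^\infty[u(\theta_\a)]_\infty\in H^m_\infty$ and hence, by Lemma~\ref{lemma4.2}, that the pairing is finite and well-defined; and (iii) invoking $(*)$ together with the convergence from Theorem~\ref{theorem5.2} (which uses (C2)' and Lemma~\ref{lemma5.3} to transfer between $B_N$ and $B^*_N$). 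Push harder on your own observation about the vanishing coefficients and your argument collapses to this.
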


\begin{proof} Notice that if $p=n(N)>0$, then
  $F_N(\chi_{B^p_\a}\mu_N(B^p_\a)^{-1})=u^{p}_{\a}(N)=u_{N}(\theta_{\a})$
  and
$(F_N(\chi_{B^p_\a}\mu_N(B^p_\a)^{-1}))_{N<\o}/\U\in[F(\chi'_{B^p_\a})]_\infty$.
  and so
from (C3)' it follows that $B^{\infty}[u(\theta_{\a})]_{\infty}\in H^{m}_{\infty}$
and thus by  Lemma \ref{lemma4.2},
$$\langle [u(\theta_{\b})]_{0}| B^{\infty}([u(\theta_{\a})]_{\infty})\rangle=\overline{\langle B^{\infty}([u(\theta_{\a})]_{\infty})|[u(\theta_{\b})]_{0}\rangle}<\infty ,$$
i.e. it is well-defined.
By (C2)' and Lemma \ref{lemma5.3}, $B^{\infty}([u^{n}_{\a}]_{\infty})=[\tilde{B}^{u}(u^{n}_{\a})]_{\infty}$, for all $n<\o$.
But then the claim follows from (the proof of) Theorem \ref{theorem5.2} and (*) above.
\end{proof}

\begin{remark}\label{remark5.5} Let $Z$ be the set of all $r$ for which
there is $\d >0$ such that either for all $\e >0$
$\mu^{n}(I_{r}^{\e})\ge\d$  or for all $\e >0$, $\mu^{n}(J^{\e}_{r})\ge\d$.
In Section \ref{sec:2} we saw that  $Z$ is countable.
Let
$Y=\cup_{r\in Z}(I_{r}\cup J_{r})$. Then
as in the end of Section \ref{sec:4}, we can see that for any countable
$X\subseteq\s (A)-Y$,
we can choose a pair $(n,\e )$  so that
it is $\theta_{\a}$-good for all $\a\in X$ i.e. Theorem \ref{theorem4.10} holds, (*) above holds
for every pair $(\a,\b )\in X^{2}$ and in addition $u$
gives an embedding of the vector space generated by the Dirac deltas $\theta_{\a}$,
$\a\in X$, into $H^{m}_{0}$.
So we can make everything in Sections \ref{sec:3} and \ref{sec:4}
work for all $\theta_{\a}$, $\a\in X$, simultaneously.
If $X$  is dense in $\s (A)$, one expects  that this is enough for most arguments
in the contexts in which all functions are continuous.
\end{remark}

\section{The non-cyclic case}\label{sec:non-cyclic}

We finally look at the case where the operator doesn't have a cyclic vector. Then $H$ can be decomposed into a countable direct sum $\bigoplus_{i<\omega} H_i$ of subspaces $H_i$, invariant under $A$ and with a cyclic vector $v_i$. By the construction for the cyclic case, each $H_i$ is isomorphic to a space $L_2(S,\mu_i)$, where every $\mu_i$ is a Borel measure with $\mu_i(S)=1$.

Now, to combine these, let
$$
\mu=\sum_i 2^{-i-1}\mu_i.
$$
Then $\mu$ is a finite Borel measure, and every $\mu_i$ is absolutely continuous with respect to $\mu$. Thus by the Radon-Nikodym theorem there are Borel functions $f_i$ such that for all Borel sets $A\subseteq S$
$$
\mu_i(A)=\int_Af_id\mu.
$$
This means that each $L_2(S,\mu_i)$ can be isometrically embedded into $L_2(S,\mu)$ by the mapping generated by 
$$
g(x)\mapsto \sqrt{f_i(x)}g(x).
$$
Now if we wish to combine the direct sum of these into an $L_2$ space over a finite measure space, we can turn to the \emph{Bochner spaces}, which are function spaces of vector valued functions (see, e.g., \cite{Hy}). We can recognize the space $\oplus_{i<\omega}L_2(S,\mu)$ as a Bochner-$\ell_2$-space with values in the space $L_2(S,\mu;\C)$, i.e., $\ell_2(L_2(S,\mu;\C))$. By \cite[Corollary 1.2.23]{Hy} $\ell_2(L_2(S,\mu;\C))\cong L_2(S,\mu;\ell_2(\C))$, where the latter is the vector valued $L_2$-space over $S$ with values in $\ell_2(\C)$.
Our sum space $\oplus_iH_i$ can now be isometrically embedded into this space by combining the isometric isomorphisms $H_i\cong L_2(S,\mu_i)$ obtained from our previous theory, the embeddings $L_2(S,\mu_i)\hookrightarrow L_2(S,\mu)$, and finally the isomorphism $\ell_2(L_2(S,\mu;\C))\cong L_2(S,\mu;\ell_2(\C))$.

If we denote the image under this embedding by $H'$, we can as before find a copy of $H'$ within a suitable ultraproduct of finite-dimensional spaces: If we denote by $H^i_N$ the finite dimensional subspaces of $H_i$ constructed as before, we can construct spaces
$$
H^N=\oplus_{i\leq N}H^i_N,
$$
and build the ultraproduct model from these. A canonical modification of the embedding $G^m$ gives an embedding of $\oplus_{i<\omega}H_i$ (and thus of $H'$) into the metric ultraproduct and allows us to use finite dimensional approximations. It is worth noting that this does not give an embedding of all of $L_2(S,\mu;\ell_2(\C))$ into the metric ultraproduct.

Now, although we can find a spectral decomposition of our space in an ultraproduct of finite-dimensional spaces, we don't see that this would give a general approach to kernels. If the operator $B$ behaves differently in the different spaces $H_i$, one doesn't get a common complex-valued kernel that could be used for the vectors of the Bochner space. And if the $H_i$ aren't invariant under $B$, one cannot treat these spaces separately, either.

\end{document}